\providecommand{\germ}{\mathfrak}
\newcommand{\C}{\mathbb{C}}
\newcommand{\Z}{\mathbb{Z}}
\newcommand{\Q}{\mathbb{Q}}
\newcommand{\R}{\mathbb{R}}
\newcommand{\N}{\mathbb{N}}
\newcommand{\A}{\mathbb{A}}
\renewcommand{\Re}{\operatorname{Re}}
\newcommand{\St}{{\operatorname{St}}}
\newcommand{\cusp}{{\operatorname{cusp}}}
\newcommand{\sqr}{{\operatorname{sqr}}}
\newcommand{\temp}{{\operatorname{temp}}}
\newcommand{\gen}{{\operatorname{gen}}}
\newcommand{\ucusp}{{\operatorname{u-cusp}}}
\newcommand{\usqr}{{\operatorname{u-sqr}}}
\newcommand{\ugen}{{\operatorname{u-gen}}}
\renewcommand{\u}{{\operatorname{u}}}
\newcommand{\Irr}{\Pi}
\newcommand{\clss}{\Gamma}
\newcommand{\gal}{\sigma}
\newcommand{\inv}{\iota}
\newcommand{\oo}{\mathfrak{o}}
\newcommand{\Hom}{\operatorname{Hom}}
\newcommand{\bs}{\backslash}
\newcommand{\GL}{\operatorname{GL}}
\newcommand{\abs}[1]{\left|{#1}\right|}
\newcommand{\trivchar}{{\bf 1}}
\newcommand{\p}{{\mathfrak{p}}}   
\newtheorem{theorem}{Theorem}[section]
\newtheorem{lemma}[theorem]{Lemma}
\newtheorem{proposition}[theorem]{Proposition}
\newtheorem{definition}[theorem]{Definition}
\newtheorem{corollary}[theorem]{Corollary}
\newtheorem{example}[theorem]{Example}
\title {Gamma factors root numbers and distinction}
\author{Nadir Matringe}
\address{Laboratoire de Mathématiques et Applications
Téléport 2 - BP 30179, 11 Boulevard Marie et Pierre Curie, 86962 Futuroscope Chasseneuil Cedex, France}
\email{nadirmatringe@outlook.fr}
\author{Omer Offen}
\address{Department of Mathematics, Technion -- Israel Institute of Technology , Haifa 3200003, Israel}
\email{offen@tx.technion.ac.il}
\thanks{Omer Offen, partially supported by ISF grant No. 1394/12}
\date{\today}
\begin{document}

\setcounter{tocdepth}{1}

\begin{abstract}
We study a relation between distinction and special values of local invariants for representations of the general linear group over a quadratic extension of $p$-adic fields.
We show that the local Rankin-Selberg root number of any pair of distinguished representation is trivial and as a corollary we obtain an analogue for the global root number of any pair of distinguished cuspidal representations. We further study the extent to which the gamma factor at $1/2$ is trivial for distinguished representations as well as the converse problem.   
\end{abstract}

\maketitle
\section{Introduction}
This work continues to study the relations between distinguished representations and triviality at $1/2$ of the local gamma and epsilon factors in the context of $\GL_n$ over a quadratic extension.

Let $E/F$ be a quadratic extension of $p$-adic fields and fix a non-trivial character $\psi$ of $E$ that is trivial on $F$. A (smooth, complex valued) representation of $\GL_n(E)$ is called distinguished if it admits a non-zero $\GL_n(F)$-invariant linear form. 

In his dissertation \cite{MR2716711}, Ok showed that for $n>1$ and an irreducible, cuspidal representation $\pi$ of $\GL_n(E)$ with a central character that is trivial on $F^\times$ we have the following. The representation $\pi$ is distinguished if and only if for every irreducible, unitary, generic and distinguished representation $\pi'$ of $\GL_{n-1}(E)$ the local Rankin-Selberg gamma factor $\gamma(s,\pi,\pi',\psi)$ satisfies 
\[
\gamma(1/2,\pi,\pi',\psi)=1.
\]
Ok suggests that relations of this nature, may hold more generally. This was further explored by the second named author in \cite{MR2787356}, a paper that, unfortunately, contains some sloppy mistakes. The errors occur in the very last part  of the paper \cite[\S 7.2]{MR2787356} where local $L$-values are cancelled out at points where there may be poles. The upshot is that the main result, \cite[Theorem 0.1]{MR2787356}, is partially wrong and even the proof of the parts that are correct is invalid. The mistake was recently noted by the first named author. This led to the current collaboration, aiming to correct the mistakes and explore further the relation between distinction and special values of local factors. 
We remark that in addition, the proof of \cite[Corollary 7.2]{MR2787356} is not valid, although the statement is correct and a proof can be found in the recent work of Anandavardhanan and Matringe, \cite{AM}.

We now summarize our main results.
The first one says that the local Rankin-Selberg root number attached to a pair of distinguished representations is trivial. It was conjectured in \cite[Conjecture 5.1]{MR2448081}. More explicitly (see Theorem \ref{thm: eps triv}):
\begin{theorem}
Let $\pi_i$ be an irreducible and distinguished representation of $\GL_{n_i}(E)$, $i=1,2$. Then 
\[
\epsilon(1/2,\pi_1,\pi_2,\psi)=1.
\]
\end{theorem}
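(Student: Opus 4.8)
The plan is to transfer the statement to the Galois side and to recognise $\epsilon(1/2,\pi_1,\pi_2,\psi)$ as the local root number of a \emph{conjugate-orthogonal} representation of the Weil group $W_E$, which is forced to equal $1$ precisely because $\psi$ is nontrivial and trivial on $F$. First I would pass to Langlands parameters: by the local Langlands correspondence and its compatibility with Rankin--Selberg local factors (Jacquet--Piatetski-Shapiro--Shalika in the generic case, and in general via multiplicativity of $\epsilon$-factors, the factors being defined for all irreducible pairs through the correspondence) one has $\epsilon(1/2,\pi_1,\pi_2,\psi)=\epsilon(1/2,\phi_1\otimes\phi_2,\psi)$, where $\phi_i$ is the Weil--Deligne parameter of $\pi_i$ and the right-hand side is a Deligne--Langlands $\epsilon$-factor of a representation of $W_E$ taken with respect to $\psi$. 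Working throughout with $\epsilon$-factors rather than $\gamma$-factors, and on the Galois side from the outset, is what avoids cancelling $L$-factors at points where they may have poles — the source of the error in \cite{MR2787356}.

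The crux is then the following: if $\pi$ is an irreducible distinguished representation of $\GL_n(E)$, then $\phi_\pi$ is conjugate-orthogonal. Distinction forces $\pi^{\sigma}\cong\pi^{\vee}$, hence $\phi_\pi^{\sigma}\cong\phi_\pi^{\vee}$, so $\phi_\pi$ is conjugate-self-dual; what must be excluded is a conjugate-symplectic summand. I would invoke the classification of irreducible distinguished representations: such a $\pi$ is a product of distinguished discrete series together with pairs $\mu\times\mu^{\sigma\vee}$ with $\mu$ discrete series, so $\phi_\pi$ is a direct sum of parameters of distinguished discrete series and of hyperbolic summands $\phi_\mu\oplus\phi_\mu^{\sigma\vee}$; the hyperbolic summands are conjugate-orthogonal automatically, while the distinguished discrete series summands are conjugate-orthogonal by the dichotomy of Anandavardhanan--Kable--Tandon (distinction by the trivial character $\leftrightarrow$ conjugate-orthogonal type, distinction by $\omega_{E/F}$ $\leftrightarrow$ conjugate-symplectic type), equivalently by the characterisation of distinction through a pole at $s=0$ of the Asai $L$-function. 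For generic $\pi$ one can argue more directly, since distinction is equivalent to $L(s,\pi,\mathrm{As})$ having a pole at $s=0$, i.e.\ to $\mathrm{As}(\phi_\pi)$ containing the trivial representation, which forces conjugate-orthogonality; the non-generic case can be reduced to this via the interplay of distinction with parabolic induction and Zelevinsky duality, the $\epsilon$-factor depending only on $\phi_\pi$.

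To conclude, the tensor product of two conjugate-orthogonal representations of $W_E$ is conjugate-orthogonal (tensor the symmetric conjugate-duality pairings, and note that conjugate-orthogonality is preserved under direct sums), so $\phi_1\otimes\phi_2$ is conjugate-orthogonal by the previous step. By Deligne's computation of $\epsilon$-factors of conjugate-self-dual representations (see also Gan--Gross--Prasad), the local root number of a conjugate-orthogonal representation of $W_E$ with respect to a nontrivial additive character of $E$ that is trivial on $F$ equals $1$; applied to $\rho=\phi_1\otimes\phi_2$ this yields $\epsilon(1/2,\pi_1,\pi_2,\psi)=1$. Note that without conjugate-orthogonality the functional equation $\epsilon(s,\rho,\psi)\epsilon(1-s,\rho^{\vee},\psi^{-1})=1$ together with conjugate-self-duality and $\psi^{\sigma}=\psi^{-1}$ only gives $\epsilon(1/2,\pi_1,\pi_2,\psi)=\pm1$; the conjugate-orthogonal type is exactly what pins the sign to $+1$.

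I expect the difficulty to be concentrated entirely in the second step: one needs enough of the classification of distinguished irreducible representations — not merely the generic or discrete-series cases — together with the compatibility of the Asai/local-Langlands dictionary with distinction, to know that no conjugate-symplectic summand can occur in the parameter of a distinguished representation. Steps one and three are essentially formal once this is in hand.
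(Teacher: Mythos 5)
Your proposal is correct, but it takes a genuinely different route from the paper's.

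The paper stays entirely on the automorphic side. It first passes to the distinguished standard module $\lambda(\pi)$ and invokes Gurevich's classification (Proposition \ref{prop: dist std}), which expresses $\lambda(\pi)$ as a product of distinguished discrete series $\tau_j$ and ``hyperbolic'' pairs $\delta_i^\gal\times\delta_i^\vee$. Multiplicativity of $\epsilon$-factors (Lemma \ref{lem: mult}) then reduces the theorem to three elementary cases: the hyperbolic pairs contribute trivially by a formal cancellation exploiting $\psi^\gal=\psi^{-1}$ (Lemma \ref{lem: conj dual}), and the pairing of two distinguished discrete series is handled by a direct computation (Lemma \ref{lem twosq}) that uses the Anandavardhanan--Kable--Tandon/Kable/Anandavardhanan--Rajan dichotomy for $\St_k(\rho)$ (Proposition \ref{prop: dist}) together with Ok's analytic cuspidal result $\gamma(1/2,\rho_1,\rho_2,\psi)=1$ (Lemma \ref{lem: Ok}). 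You instead transfer to the Galois side, show that the Weil--Deligne parameter of a distinguished irreducible representation is \emph{conjugate-orthogonal}, observe that a tensor product of two conjugate-orthogonal parameters is conjugate-orthogonal, and then invoke the Gan--Gross--Prasad root-number formula: a conjugate-orthogonal representation of $W_E$ has trivial $\epsilon$ with respect to any nontrivial $\psi$ trivial on $F$. (The attribution to Deligne is slightly off --- Deligne treats the orthogonal case for $W_F$; the conjugate-orthogonal statement you need is GGP's, going back to Fröhlich--Queyrut.) Both routes lean on exactly the same classification inputs: Gurevich for the standard module, and the conjugate-self-dual dichotomy for discrete series (automorphic in the paper, Galois-theoretic for you, with Henniart's comparison of Rankin--Selberg and Galois Asai $L$-functions bridging the two). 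The trade-off is that the paper's argument is self-contained within the Rankin--Selberg theory and needs Ok's cuspidal gamma computation as the analytic kernel, whereas your argument is more conceptual --- it isolates conjugate-orthogonality as \emph{the} structural reason for the triviality of the root number --- but at the cost of importing LLC-compatibility for $\epsilon$-factors of pairs and for Asai, plus the GGP formula. Your observation that working with $\epsilon$ on the Galois side from the outset avoids the pole-cancellation pitfall of \cite{MR2787356} is apt; the paper achieves the same protection by only ever evaluating $\epsilon$-factors (never $\gamma$ or $L$) at $1/2$ until the final cuspidal step, where holomorphy is checked explicitly.
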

Together with an archimedean analogue carried out in \S \ref{sec: arch}, this gives triviality of the global  Rankin-Selberg root number of a pair of distinguished automorphic cuspidal representations. This is Theorem \ref{thm: glb main}. 

For the local gamma factors, we have the following (see Theorem \ref{thm: gamtriv} for a slightly more general statement):
\begin{theorem}
Let $\pi_i$ be an irreducible and distinguished representation of $\GL_{n_i}(E)$, $i=1,2$. If $\pi_1$ is unitary and generic and $\pi_2$ is tempered then 
\[
\gamma(1/2,\pi_1,\pi_2,\psi)=1.
\]
\end{theorem}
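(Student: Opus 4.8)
The plan is to reduce the statement to the root-number theorem stated above, combined with the functional equation relating $\gamma$, $\epsilon$ and the Rankin--Selberg $L$-factors, and with the symmetry of those $L$-factors forced by distinction. First I would use the defining identity
\[
\gamma(s,\pi_1,\pi_2,\psi)=\epsilon(s,\pi_1,\pi_2,\psi)\,\frac{L(1-s,\pi_1^\vee,\pi_2^\vee)}{L(s,\pi_1,\pi_2)}
\]
and specialize at $s=1/2$. By the theorem on root numbers, $\epsilon(1/2,\pi_1,\pi_2,\psi)=1$, so it suffices to show that $L(1/2,\pi_1^\vee,\pi_2^\vee)=L(1/2,\pi_1,\pi_2)$ and that this common value is finite and nonzero. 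The latter is automatic once finiteness is known, since a local Rankin--Selberg $L$-factor, viewed as a function of $q^{-s}$, is the inverse of a polynomial with constant term $1$ and hence never vanishes.

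Second, both $\pi_1$ (unitary and generic) and $\pi_2$ (tempered, hence irreducible and generic) are generic and distinguished, so by the known relation for irreducible generic distinguished representations one has $\pi_i^\vee\cong\pi_i^{\sigma}$, where $\sigma$ denotes the nontrivial automorphism of $E/F$. Applying $\sigma$ to the Rankin--Selberg integral turns Whittaker models for $(\pi_1,\pi_2,\psi)$ into Whittaker models for $(\pi_1^\sigma,\pi_2^\sigma,\psi\circ\sigma)$ without changing the value of the integral, and since the local $L$-factor is independent of the additive character this gives $L(s,\pi_1^\vee,\pi_2^\vee)=L(s,\pi_1^\sigma,\pi_2^\sigma)=L(s,\pi_1,\pi_2)$ identically in $s$; hence $\gamma(1/2,\pi_1,\pi_2,\psi)=\epsilon(1/2,\pi_1,\pi_2,\psi)$ as soon as the common $L$-value is finite.

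Third, I would verify holomorphy of $L(s,\pi_1,\pi_2)$ at $s=1/2$. By the classification of irreducible unitary generic representations, $\pi_1$ is a product, in the sense of parabolic induction, of essentially square-integrable representations, the ones of nonzero exponent occurring in complementary-series pairs $\delta|\det|^{a}$, $\delta|\det|^{-a}$ with $0<a<1/2$. By multiplicativity of the Rankin--Selberg $L$-factor in the first variable, $L(s,\pi_1,\pi_2)$ then factors into $L$-factors of the shape $L(s\pm a,\delta,\pi_2)$ and $L(s,\tau,\pi_2)$ with $\delta,\tau$ (essentially) square-integrable and $\pi_2$ tempered; each such factor, being the Rankin--Selberg $L$-factor of two essentially tempered representations, is holomorphic on $\{\,\Re(s)>0\,\}$, and since $1/2\pm a>0$ in every case, all factors are holomorphic at $s=1/2$. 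Combining the three steps yields $\gamma(1/2,\pi_1,\pi_2,\psi)=\epsilon(1/2,\pi_1,\pi_2,\psi)=1$.

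The genuine difficulty is carried by the root-number theorem invoked in the first step; within this deduction the only delicate point is the holomorphy in the third step, and that is exactly where the precise hypotheses are used: if $\pi_2$ were merely unitary and generic instead of tempered, the complementary-series shifts from $\pi_1$ and $\pi_2$ together could push some Rankin--Selberg $L$-factor onto its polar locus at $s=1/2$ and destroy the cancellation.
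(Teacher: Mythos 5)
Your proof is correct and follows essentially the same route as the paper's proof of Theorem 3.7: reduce $\gamma(1/2)=\epsilon(1/2)$ by showing $L(s,\pi_1,\pi_2)=L(s,\pi_1^\vee,\pi_2^\vee)$ via $\pi_i^\vee\simeq\pi_i^\sigma$ (Flicker) and Galois-invariance of $L$, then verify holomorphy at $s=1/2$ from the classification of unitary generic representations together with multiplicativity and the pole locations of square-integrable Rankin--Selberg $L$-factors, and finally invoke the root-number theorem. The paper phrases the holomorphy step in terms of the class $\Irr_{<1/2}$ (which contains $\Irr_{\mathrm{u\text{-}gen}}$) rather than unpacking the Tadić classification as you do, but the substance is identical.
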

In Example \ref{example} we provide many pairs of distinguished representations for which gamma at $1/2$ is not trivial. (In fact, $\gamma(1/2,\pi_1,\pi_2,\psi)=-1$ if $\pi_i$ is the trivial representation of $\GL_i(E)$, $i=1,2$.)

Conversely, for essentially square-integrable representations, we show that the triviality of gamma at $1/2$ for enough twists by distinguished representations characterizes distinction. More explicitly, if $\delta$ is an irreducible, essentially square integrable representation of $\GL_n(E)$ and $\gamma(1/2,\delta,\pi,\psi)=1$ for any distinguished, irreducible representation $\pi$ of $\GL_m(E)$ for all $m\le n$ then $\delta$ is distinguished. 

In fact, only tempered twists are necessary and in many cases only for $m\le n-2$. Theorem \ref{thm qsr} is a generalization of the following.
\begin{theorem}
Let $\delta$ be an essentially square integrable representation of $\GL_n(E)$. If $\gamma(1/2,\delta,\pi,\psi)=1$
for every $m\le n$ and every irreducible, tempered and distinguished representation $\pi$ of $\GL_m(E)$ then $\delta$ is distinguished.
If $\delta$ is not of the form $\St_2(\rho)$ then taking $m\le n-2$ is enough. 
\end{theorem}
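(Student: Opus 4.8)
The converse direction is the delicate one, so let me sketch how I would attack it. The basic strategy is to argue by contradiction: suppose $\delta$ is an essentially square integrable representation of $\GL_n(E)$ that is \emph{not} distinguished, and produce a tempered distinguished representation $\pi$ of some $\GL_m(E)$, $m \le n$, with $\gamma(1/2, \delta, \pi, \psi) \ne 1$. Writing $\delta = \St_r(\rho)$ for an irreducible cuspidal $\rho$ of $\GL_t(E)$ with $rt = n$, the key input is the classification of distinguished (and more generally, $\GL(F)$-relatively local) essentially square integrable representations due to Anandavardhanan--Kable--Tandon and the Jacquet--Rallis/Flicker machinery: $\St_r(\rho)$ is distinguished precisely when $\rho$ is distinguished and $r$ is odd, or $\rho$ is $\sigma$-selfdual with a prescribed dichotomy and $r$ has the appropriate parity. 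So $\delta$ being non-distinguished must fall into one of a short list of explicit cases, and in each I would exhibit a witnessing twist.

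First I would reduce, via the multiplicativity of gamma factors (Jacquet--Piatetski-Shapiro--Shalika) and the behaviour of $\gamma(1/2, \cdot, \cdot, \psi)$ under unramified twists, to controlling $\gamma(1/2, \St_r(\rho), \pi, \psi)$ for $\pi$ ranging over tempered distinguished representations built as products of Steinberg representations $\St_s(\tau)$ with $\tau$ cuspidal distinguished or $\sigma$-selfdual. The factorization expresses this gamma factor as a product of local Rankin--Selberg gammas $\gamma(s_{ij}, \rho, \tau, \psi)$ evaluated at various shifts on the critical line; using the known location of poles and zeros of Rankin--Selberg $L$-factors for cuspidal representations of $\GL$ over $E$, together with the functional equation, one can compute such a product explicitly in terms of root numbers $\epsilon(1/2, \rho, \tau, \psi)$ and the Asai $L$-function $L(s, \rho, \text{As})$ — whose pole exactly detects $\sigma$-selfduality and distinction of $\rho$. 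This is where the sign $\gamma(1/2,\cdot,\cdot,\psi) = -1$ of Example \ref{example} comes from, and the same mechanism, applied with a well-chosen $\pi$, forces a nontrivial value whenever $\delta$ is non-distinguished.

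The second, more structural step is to handle the parity/case analysis and in particular to see \emph{why $m \le n-2$ suffices except for $\St_2(\rho)$}. The point is that when $n$ is not of the form $2t$, or when $\delta = \St_r(\rho)$ with $r \ge 3$, one has enough room to detect the failure of distinction using a twist by $\pi$ living on $\GL_m$ with $m \le n-2$ — for instance, twisting by a distinguished cuspidal or by $\St_{r-2}(\tau)$ rather than needing the full $\GL_{n-1}$ or $\GL_n$ twist; the obstruction that genuinely requires $m = n$ (or $m=n-1$) arises only in the minimal case $\delta = \St_2(\rho)$, where the relevant Rankin--Selberg integral sees the Asai $L$-value of $\rho$ only through a twist of the same size. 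I would make this precise by invoking the hereditary properties of distinction along the mirabolic/derivative filtration (Bernstein--Zelevinsky together with the Ok/Flicker analysis of $\GL(F)$-invariant functionals) to locate exactly which twist is forced.

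\textbf{Main obstacle.} The hard part will be the explicit evaluation of the factorized product of Rankin--Selberg gamma factors at $1/2$: one must carefully track the zeros and poles of the various $L$-factors $L(s,\rho\times\tau)$ and the Asai $L$-factor at the precise points dictated by the Zelevinsky segments of $\delta$ and $\pi$, and show that the accumulated sign is $\ne 1$ exactly when $\delta$ is not distinguished, while being scrupulous not to repeat the cancellation-at-a-pole error of \cite[\S 7.2]{MR2787356}. A secondary subtlety is ensuring that a \emph{tempered} (not merely generic unitary) distinguished witness always exists in the allowed range of $m$; this relies on the existence results for distinguished discrete series of Anandavardhanan--Kable--Tandon and may require a small separate argument in the exceptional $\St_2(\rho)$ case to justify why one cannot do better than $m \le n$.
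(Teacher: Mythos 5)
Your proposal misses the central mechanism of the paper's argument and has a logical gap that the paper's two-step reduction is designed precisely to avoid.

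The paper does \emph{not} directly compute products of Rankin--Selberg gamma factors against tempered distinguished twists and show they accumulate to $-1$ when $\delta=\St_k(\rho)$ is non-distinguished. That route is exactly the one that led to the errors in \cite[\S 7.2]{MR2787356}, and the paper is explicitly structured to avoid it. Instead, the proof proceeds in two stages. First, one twists by the \emph{non-distinguished} family $\tau_s = \nu^s\rho^\gal\times\nu^{-s}\rho^\vee$ for $s$ purely imaginary; these $\tau_s$ are tempered, distinguished, and live on $\GL_{2m}$ with $2m\le n-2$ when $k\ge 3$ (and $2m=n$ when $k=2$, which is why $\St_2(\rho)$ is exceptional). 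The hypothesis $\gamma(1/2,\delta,\tau_s,\psi)=1$ on the imaginary axis extends by meromorphic continuation to all $s\in\C$; taking $s\to k/2$ and using the pole structure of cuspidal $L$-factors (Lemma \ref{lem: is sym}) forces $\rho^\gal\simeq\rho^\vee$, hence $\delta^\gal\simeq\delta^\vee$. Second, once conjugate-selfduality is established (and the central character is handled by Proposition \ref{prop: trivial central}), one invokes the Hakim--Offen converse theorem \cite[Proposition 3.0.3]{MR3377749} (Lemma \ref{lem: HO}), which already produces a distinguished $\delta'\in\Irr_\sqr(k)$, $k\le n-2$, with $\gamma(1/2,\delta,\delta',\psi)=-1$ if $\delta$ is not distinguished, contradicting the hypothesis.

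Your proposed argument by contradiction via the dichotomy classification of distinguished discrete series only gets traction once you know $\delta^\gal\simeq\delta^\vee$; you do not explain how to rule out the case $\delta^\gal\not\simeq\delta^\vee$, which is where the meromorphic-continuation twist is essential. You also characterize the reduction from generic unitary twists to tempered twists as a ``secondary subtlety,'' whereas in the paper this reduction (Lemma \ref{lem: mer cont} and the holomorphic/meromorphic continuation in the twisting parameter) is the main new idea enabling the theorem. Finally, the mirabolic/derivative-filtration route you sketch for pinning down the size of the required twist is not what the paper uses; the $m\le n-2$ versus $m\le n$ distinction comes cleanly from the degree $2m$ of the twist $\tau_s$ relative to $n=km$, not from Bernstein--Zelevinsky functors.
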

Here $\rho$ is cuspidal and the generalized Steinberg representation $\St_2(\rho)$ is the unique irreducible quotient of the parabolically induced representation $|\det|^{-1/2}\rho \times |\det|^{1/2}\rho$.

Example \ref{optimal} shows that we cannot expect such a converse theorem for more general distinguished representations. For essentially square integrable representations of the form $\St_2(\rho)$ we do expect twists up to $n-2$ to characterize distinction but a proof will require different methods. 

We remark that the above relative converse theorem is an analogue of the local converse theorem of Henniart \cite{MR1228128} or rather the improvement by Chen \cite{MR2257542} (see \cite[\S2]{MR3377749}).
We do not explore the extent to which it can be improved in the direction of \cite{1601.03656}.\\

Let us now outline the structure of the paper, and give the ideas behind some proofs. 

Theorem \ref{thm: eps triv}, which is the triviality 
of the non-archimedean Rankin-Selberg root number of a pair of irreducible distinguished representations is proved in Section \ref{sec: non-arch root}, using the cuspidal case due to Ok \cite{MR2716711}, and the classification of distinguished standard modules due to Gurevich \cite{MR3421655}. 

Section \ref{sec: central} is devoted to the proof of Proposition \ref{prop: trivial central}, which is the relative analogue of the well known fact that the gamma factors of twists by characters of an irreducible representation determine its central character \cite[Corollary 2.7]{MR3349305}. Our result says that if the gamma factors of the twists of an irreducible representation $\pi$ by distinguished characters are trivial at $1/2$, then the central character of $\pi$ is itself distinguished. Our proof is an adaptation of that in \cite{MR3349305}.

In Section \ref{sec: discrete converse}, we prove Theorem \ref{thm qsr}, our relative converse theorem for discrete series representations. 
Once again the idea is to reduce to Ok's cuspidal converse theorem, or rather its refinement due to Hakim and the second named author \cite{MR3377749}, 
using good twists and analytic methods. More precisely, twisting a generalised Steinberg representation $St_k(\rho)$ by representations of the form $|\det|^{-s}\rho^\vee \times |\det|^{s}\rho^\sigma$ (see Section \ref{notation} for undefined notation) allows to reduce to conjugate self-dual discrete 
series representations, and then use \cite{MR3377749}, where this case was taken care of. Then we observe that it is in fact enough 
to twist by such representations with $s\in i\R$, thanks to extension of meromorphic identities, and this allows us to twist only by distinguished tempered representations.

Finally, Theorem \ref{thm: arch rtnmb} in Section \ref{sec: complements}, which is the archimedean analogue of Theorem \ref{thm: eps triv} 
is a consequence of Kemarsky's classification of distinguished standard modules \cite{MR3404670}. The triviality of the global root numer of a pair of distinguished cuspidal automorphic representations then follows, and is Theorem \ref{thm: glb main}.

\section{Notation and Preliminaries}\label{notation}
Let $E$ be a $p$-adic field (a finite extension of $\Q_p$) and $\psi$ a non-trivial character of $E$. We denote by $\p_E=
\varpi_E\mathfrak{o}_E$ 
the maximal ideal of the ring of integers $\mathfrak{o}_E$ 
of $E$. Here $\varpi_E$ is a uniformizer of $E$. 
Let $G_n=\GL_n(E)$, and consider the involution $g\mapsto g^\inv:={}^tg^{-1}$ on $G_n$.

By a representation of $G_n$ we mean a complex valued, smooth representation. 
For a representation $\pi$ of $G_n$ let $\pi^\inv$ be the representation on the space of $\pi$ given by $\pi^\inv(g)=\pi(g^\inv)$ and let $\pi^\vee$ be the smooth dual of $\pi$. 
For representations $\pi_i$ of $G_{n_i}$, $i=1,\dots,k$ we denote by $\pi_1\times\cdots \times \pi_k$ the representation of $G_{n_1+\cdots +n_k}$ obtained from $\pi_1\otimes\cdots \otimes \pi_k$ by normalized parabolic induction.
For a representation $\pi$ and a character $\chi$ of $G_n$ let $\chi\pi$ be the representation on the space of $\pi$ given by $(\chi\pi)(g)=\chi(g)\pi(g)$. Let $\nu$ denote the character $\abs{\det}$ of $G_n$. 

Let $\Irr(n)$ be the collection of all irreducible representations of $G_n$. By convention, we also let $\Irr(0)$ be the set containing only the trivial representation of the trivial group $G_0$ and let $\Irr=\sqcup_{n=0}^\infty \Irr(n)$.

If $\pi\in\Irr$ then $\pi^\vee \simeq \pi^\inv$ \cite{MR0404534}. More generally, if $\pi_i\in \Irr$, $i=1,\dots, k$ then
\[
(\pi_1\times \cdots \times \pi_k)^\inv \simeq \pi_k^\vee \times\cdots\times \pi_1^\vee.
\]

Let $U_n$ be the group of upper triangular unipotent matrices in $G_n$ and, by abuse of notation, let $\psi$ also denote the character on $U_n$ defined by
\[
\psi(u)=\psi(u_{1,2}+\cdots +u_{n-1,n}).
\]

\begin{definition}
A representation $\pi$ of $G_n$ is of \emph{Whittaker type} if it is of finite length and 
\[
\dim \Hom_{U_n}(\pi,\psi)=1.
\] 
An irreducible representation of Whittaker type is called \emph{generic}.
\end{definition}

We use a lower index to denote special classes of representations in $\Irr$ as follows:
\begin{itemize}
\item $\Irr_{\cusp}$--cuspidal representations in $\Irr$;
\item $\Irr_{\sqr}$--essentially square integrable representations in $\Irr$;
\item $\Irr_{\temp}$--tempered representations in $\Irr$;
\item $\Irr_{\gen}$--generic representations in $\Irr$;
\item $\Irr_{\u-\bullet}$--unitary representations in the class $\Irr_{\bullet}$.
\end{itemize}
We have $\Irr_\ucusp \subseteq \Irr_\usqr \subseteq \Irr_\temp\subseteq \Irr_\ugen$ and $\Irr_\cusp \subseteq \Irr_\sqr \subseteq \Irr_\gen$.
Recall that 
\[
\Irr_\sqr=\{\St_k(\rho):\rho\in\Irr_\cusp,\,k\in\N\}
\]
where $\St_k(\rho)$ is the unique irreducible quotient of $\nu^{(1-k)/2}\rho\times \nu^{(3-k)/2}\rho\times\cdots\times\nu^{(k-1)/2}\rho$ (see \cite[Theorem 9.3]{MR584084}) and $\St_k(\rho)^\vee\simeq \St_k(\rho^\vee)$
(see \cite[Proposition 9.4]{MR584084}). 

For $\delta\in\Irr_\sqr$ let $e=e(\delta)\in \R$ be the unique real number such that $\nu^{-e}\delta\in\Irr_\usqr$. A representation of the form
\[
\lambda=\delta_1 \times \cdots \times \delta_k \ \ \ \text{where}\ \ \ \delta_1,\dots,\delta_k\in\Irr_\sqr, \ \ \ e(\delta_1)\ge \cdots\ge e(\delta_k)
\]
is called a standard module. It is of Whittaker type and admits a unique irreducible quotient $\pi$, the Langlands quotient of $\lambda$. The Langlands classification, is the bijection $\lambda\mapsto \pi$ from the set of all standard modules to $\Irr$ (see \cite{MR0507262}). We denote by $\lambda(\pi)$ the standard module with unique irreducible quotient $\pi\in\Irr$.
Note that
\[
\lambda(\pi^\vee)=\lambda(\pi)^\inv,\ \ \ \pi\in\Irr.
\]

For $\delta_1,\,\delta_2\in \Irr_\sqr$ we say that $\delta_1$ precedes $\delta_2$ and write $\delta_1\prec \delta_2$ if $\delta_i=\St_{k_i}(\rho_i)$, where $\rho_i\in \Irr_\cusp$, $i=1,2$ are such that
$\rho_2\simeq \nu^{(k_1+k_2)/2+1-i}\rho_1$ for some $i\in\{1,\dots,\min(k_1,k_2)\}$. 
By \cite[Theorem 9.7]{MR584084}, for $\delta_1,\dots,\delta_k\in\Irr_\sqr$ we have $\delta_1\times \cdots \times \delta_k\in\Irr$ if and only if $\delta_i\not\prec \delta_j$ for all $i\ne j$ and
\[
\Irr_\gen=\{\delta_1\times \cdots \times \delta_k: \delta_1,\dots,\delta_k\in\Irr_\sqr,\ \delta_i \not\prec \delta_j,\ 1\le i\ne j \le k\}.
\] 
More generally, if $\delta_1,\dots,\delta_k$ is ordered so that $\delta_i\not\prec \delta_j$ for all $1\le i< j \le k$ then $\lambda=\delta_1 \times \cdots\times \delta_k$ is a standard module, independent of any such order.
We will say that such a realization $\delta_1\times \cdots\times \delta_k$ of $\lambda$ is in standard form.

For representations $\pi$ of $G_n$ and $\tau$ of $G_m$ of Whittaker type let $L(s,\pi,\tau)$ and $\epsilon(s,\pi,\tau,\psi)$ be the local Rankin-Selberg  $L$ and $\epsilon$-factors defined by \cite{MR701565} and let 
\[
\gamma(s,\pi,\tau,\psi)=\epsilon(s,\pi,\tau,\psi)\frac{L(1-s,\pi^\inv,\tau^\inv)}{L(s,\pi,\tau)}.
\]

For any $\pi,\,\tau\in\Irr$ the local $L$, $\epsilon$ and $\gamma$-factors for the pair $(\pi,\tau)$ are defined as the corresponding factors for the pair $(\lambda(\pi),\lambda(\tau))$. In particular, we have
\begin{equation}\label{eq: gamma}
\gamma(s,\pi,\tau,\psi)=\epsilon(s,\pi,\tau,\psi)\frac{L(1-s,\pi^\vee,\tau^\vee)}{L(s,\pi,\tau)}.
\end{equation}

We list below, some basic properties of the local rankin-selberg $L$ and $\epsilon$ factors. They are immediate from the definitions and will be used freely in the sequel. Let $\pi$ and $\tau$ be either in $\Irr$ or of Whittaker type. Then
\begin{enumerate}
\item $L(s,\pi,\tau)=L(s,\tau,\pi)$ and $\epsilon(s,\pi,\tau,\psi)=\epsilon(s,\tau,\pi,\psi)$.
\item $L(s,\nu^t\pi,\tau)=L(s+t,\pi,\tau)$ and $\epsilon(s,\nu^t\pi,\tau,\psi)=\epsilon(s+t,\pi,\tau,\psi)$.
\item $\gamma(s,\pi,\tau,\psi)\gamma(1-s,\pi^\inv,\tau^\inv,\psi^{-1})=1=\epsilon(s,\pi,\tau,\psi)\epsilon(1-s,\pi^\inv,\tau^\inv,\psi^{-1})$.
\end{enumerate}

The following is a consequence of \cite[Proposition 8.1]{MR701565}.
\begin{lemma}\label{lem: cusp L}
Let $\rho_1,\,\rho_2\in \Irr_\cusp$. Then $L(s,\rho_1,\rho_2^\vee)=1$ unless $\rho_2=\nu^u\rho_1$ for some $u\in \C$. More generally, the meromorphic function $L(s,\rho_1,\rho_2^\vee)$  has at most simple poles. It has a pole at $s=u$ if and only if $\rho_2\simeq\nu^u\rho_1$.
In particular, if $\rho_1,\,\rho_2\in \Irr_{\ucusp}$ then $L(s,\rho_1,\rho_2^\vee)$ is holomorphic whenever $\Re(s)\ne 0$. \qed
\end{lemma}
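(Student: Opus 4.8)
The plan is to extract everything from the known structure of Rankin--Selberg $L$-factors for pairs of cuspidal representations. Recall that for $\rho_1 \in \Irr_\cusp(n_1)$ and $\rho_2 \in \Irr_\cusp(n_2)$, \cite[Proposition 8.1]{MR701565} tells us that $L(s,\rho_1,\rho_2^\vee)$ equals $1$ unless $n_1 = n_2 =: n$ and $\rho_2 \simeq \nu^u \rho_1$ for some $u \in \C$, in which case $L(s,\rho_1,\rho_2^\vee) = \prod_{j} (1 - \alpha_j q_E^{-s})^{-1}$ for a finite set of nonzero complex numbers $\alpha_j$ built from the unramified twists by which $\rho_1$ and $\rho_2^\vee$ differ. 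Concretely, since $\rho_1 \otimes \rho_1^\vee$ contains the trivial representation of $G_n \times G_n$ exactly once (Schur), the factor $L(s,\rho_1,\rho_1^\vee)$ has a single simple pole at $s = 0$ and is otherwise holomorphic and nonvanishing; twisting $\rho_1 \mapsto \rho_1$, $\rho_2^\vee \mapsto \nu^{-u}\rho_1^\vee$ shifts this to $L(s,\rho_1,\rho_2^\vee) = L(s - u, \rho_1, \rho_1^\vee)$ using property (2) of the $L$-factors listed above, so the pole moves to $s = u$ and remains simple. This already yields the first three assertions.

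First I would dispose of the case $n_1 \ne n_2$: here \cite[Proposition 8.1]{MR701565} gives $L(s,\rho_1,\rho_2^\vee) = 1$ outright, so there are no poles and nothing to check. So assume $n_1 = n_2 = n$. Next, I would observe that $L(s,\rho_1,\rho_2^\vee) \ne 1$ forces, again by \cite[Proposition 8.1]{MR701565}, the existence of $u \in \C$ with $\rho_2 \simeq \nu^u \rho_1$ (equivalently $\rho_2^\vee \simeq \nu^{-u}\rho_1^\vee$); this is the first sentence of the lemma. Then, fixing such a $u$, I would invoke property (2) to write $L(s,\rho_1,\rho_2^\vee) = L(s,\rho_1,\nu^{-u}\rho_1^\vee) = L(s-u,\rho_1,\rho_1^\vee)$, reducing the pole analysis to the single function $L(s,\rho_1,\rho_1^\vee)$. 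That function is $(1 - q_E^{-s})^{-1}$ up to the structure in \cite{MR701565} (its only pole is the simple pole coming from the invariant pairing $\rho_1 \times \rho_1^\vee \to \C$, located at $s = 0$), so $L(s,\rho_1,\rho_2^\vee)$ has a unique pole, simple, at $s = u$. Finally, for $\rho_1,\rho_2 \in \Irr_\ucusp$, unitarity forces $\rho_2 \simeq \nu^u \rho_1$ with $u \in i\R$ (the central characters and the twist must both be unitary), so the pole, if present, lies on the line $\Re(s) = 0$; hence $L(s,\rho_1,\rho_2^\vee)$ is holomorphic for $\Re(s) \ne 0$.

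The one point requiring a little care — and the place I would be most careful about — is the claim that the pole is \emph{simple} and that there is at most one of them, since a priori a Rankin--Selberg $L$-factor $L(s,\rho_1,\rho_2^\vee)$ could be a product of several geometric factors with poles at different points. The cleanest way around this is not to reprove \cite[Proposition 8.1]{MR701565} but to read off from it that the nonvanishing case is governed entirely by the cuspidal support of $\rho_1 \otimes \rho_2^\vee$, which for genuinely cuspidal $\rho_i$ of the same rank is a single orbit; equivalently, $L(s,\rho_1,\rho_2^\vee)$ depends only on $\rho_1$ up to an unramified shift once $\rho_2 \simeq \nu^u\rho_1$, and $L(s,\rho_1,\rho_1^\vee)$ has a single simple pole because $\dim \Hom_{G_n}(\rho_1,\rho_1) = 1$. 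If one prefers an integral-representation argument, the pole of $L(s,\rho_1,\rho_1^\vee)$ at $s=0$ and its simplicity follow from the standard fact that the Rankin--Selberg zeta integral of a matrix coefficient of $\rho_1$ against one of $\rho_1^\vee$ has a first-order pole there coming from the volume of $G_n(\oo_E)$-type neighbourhoods, with no higher-order contribution; but invoking \cite{MR701565} directly is the shortest route and is what I would write.
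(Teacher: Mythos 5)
Your plan --- read the lemma off \cite[Proposition 8.1]{MR701565} --- coincides with the paper's, which cites exactly that proposition and supplies no further argument (the lemma is stated with a \qed). However, the elaboration you give contains a real error in the pole count. You claim that $L(s,\rho_1,\rho_1^\vee)=(1-q_E^{-s})^{-1}$ and has a single simple pole at $s=0$. That is only so when $\rho_1$ has no non-trivial unramified self-twist. In general, if $f$ denotes the order of the (finite, cyclic) group of unramified characters $\chi$ of $E^\times$ with $\chi\rho_1\simeq\rho_1$, then $L(s,\rho_1,\rho_1^\vee)=(1-q_E^{-fs})^{-1}$, which has $f$ distinct simple poles modulo the period $2\pi i/\log q_E$, located at exactly the $s$ with $\nu^s\rho_1\simeq\rho_1$. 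After your shift $L(s,\rho_1,\rho_2^\vee)=L(s-u_0,\rho_1,\rho_1^\vee)$, the poles of $L(s,\rho_1,\rho_2^\vee)$ therefore lie at all $u$ with $\rho_2\simeq\nu^u\rho_1$, not just at the $u_0$ you fixed; the lemma is phrased as an \emph{if and only if} precisely to accommodate several such $u$. Your claim of a unique pole asserts the absence of poles that can genuinely be present, so as written it does not establish the ``only if'' direction (nor, when $f>1$, the full ``if'' direction), even though the statement itself is of course correct.

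A secondary point: the Schur's-lemma remark (``$\rho_1\otimes\rho_1^\vee$ contains the trivial representation of $G_n\times G_n$ exactly once'') is both misstated --- you mean multiplicity one for the diagonal copy of $G_n$, as the trivial representation of $G_n\times G_n$ does not occur in $\rho_1\otimes\rho_1^\vee$ --- and not an argument about the local Rankin--Selberg $L$-factor, which is defined as a g.c.d.\ of zeta integrals rather than read off from a branching multiplicity. You note yourself that citing \cite[Proposition 8.1]{MR701565} directly is the shortest route, and it is also the correct one: that proposition says in one stroke that $L(s,\rho_1,\rho_2^\vee)$ has a simple pole at $s=u$ if and only if $\rho_2\simeq\nu^u\rho_1$, and equals $1$ when the degrees differ; the holomorphy for $\Re s\ne 0$ in the unitary case is then immediate since $\rho_1,\rho_2\in\Irr_\ucusp$ and $\rho_2\simeq\nu^u\rho_1$ force $\Re u=0$.
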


\begin{lemma}\label{lem: sqr L}
Let $\delta_1,\,\delta_2\in \Irr_\sqr$. Then $L(s,\delta_1,\delta_2^\vee)$ has at most simple poles. It has a pole at $s=u$ if and only if $\delta_1\prec \nu^{1-u}\delta_2$. In particular, if $\delta_i\in\Irr_\usqr$, $i=1,2$ then $L(s,\delta_1,\delta_2^\vee)$ is holomorphic whenever $\Re(s)>0$.
\end{lemma}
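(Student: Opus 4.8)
The plan is to reduce to the cuspidal case of Lemma \ref{lem: cusp L} by means of the multiplicativity of Rankin--Selberg $L$-factors along segments. Write $\delta_i=\St_{k_i}(\rho_i)$ with $\rho_i\in\Irr_\cusp$, so that $\delta_2^\vee\simeq\St_{k_2}(\rho_2^\vee)$. The one substantive input is the Jacquet--Piatetski-Shapiro--Shalika formula \cite{MR701565} for the $L$-factor of a pair of generalized Steinberg representations, which in the present normalization reads
\[
L(s,\delta_1,\delta_2^\vee)=\prod_{j=0}^{\min(k_1,k_2)-1}L\bigl(s+c_j,\rho_1,\rho_2^\vee\bigr),\qquad c_j:=\tfrac{k_1+k_2}{2}-1-j,
\]
a product of $\min(k_1,k_2)$ cuspidal factors with successive shifts $c_j$ differing by $1$. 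Establishing this identity with the shifts pinned down correctly -- compatibly with the convention that $\St_k(\rho)$ is a Langlands quotient, so that its $L$-factor is that of its standard module -- is really the only place where work beyond bookkeeping is needed.

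Granting the formula, I would analyze the poles factor by factor. By Lemma \ref{lem: cusp L}, each $L(s+c_j,\rho_1,\rho_2^\vee)$ has at most simple poles, with a pole at $s=u$ exactly when $\rho_2\simeq\nu^{u+c_j}\rho_1$. Two distinct indices $j\ne j'$ cannot produce a pole at the same point: that would give $\nu^{c_j-c_{j'}}\rho_1\simeq\rho_1$ with $c_j-c_{j'}=j'-j\in\Z\setminus\{0\}$, whence, comparing central characters, the non-trivial character $z\mapsto\abs{z}^{(j'-j)n_1}$ of $E^\times$ would be trivial -- impossible. Hence $L(s,\delta_1,\delta_2^\vee)$ has at most simple poles, and it has a pole at $s=u$ if and only if $\rho_2\simeq\nu^{u+\frac{k_1+k_2}{2}-1-j}\rho_1$ for some $j\in\{0,\dots,\min(k_1,k_2)-1\}$. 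Since $\nu^{1-u}\delta_2=\St_{k_2}(\nu^{1-u}\rho_2)$, the definition of precedence says $\delta_1\prec\nu^{1-u}\delta_2$ iff $\nu^{1-u}\rho_2\simeq\nu^{(k_1+k_2)/2+1-i}\rho_1$ for some $i\in\{1,\dots,\min(k_1,k_2)\}$, i.e.\ $\rho_2\simeq\nu^{u+(k_1+k_2)/2-i}\rho_1$; the change of index $i=j+1$ matches this with the pole condition, giving the asserted equivalence.

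For the final assertion, if $\delta_i\in\Irr_\usqr$ then $\rho_i\in\Irr_\ucusp$, and
\[
c_j=\tfrac{k_1+k_2}{2}-1-j\ \ge\ \tfrac{k_1+k_2}{2}-\min(k_1,k_2)\ =\ \tfrac{\abs{k_1-k_2}}{2}\ \ge\ 0 .
\]
By the last sentence of Lemma \ref{lem: cusp L}, each $L(s+c_j,\rho_1,\rho_2^\vee)$ is then holomorphic whenever $\Re(s)+c_j\ne0$, in particular for $\Re(s)>0$, so the product $L(s,\delta_1,\delta_2^\vee)$ is holomorphic there as well. In summary, once the multiplicativity formula with its exact shifts is secured, the rest of the argument is a routine combination of Lemma \ref{lem: cusp L} with the index bookkeeping above; that formula is the only real obstacle.
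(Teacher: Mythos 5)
Your proposal is correct and follows essentially the same route as the paper: both reduce to Lemma \ref{lem: cusp L} via the Jacquet--Piatetski-Shapiro--Shalika multiplicativity formula (\cite[Proposition 8.2]{MR701565}) for $L(s,\St_{k_1}(\rho_1),\St_{k_2}(\rho_2)^\vee)$ and then translate the pole condition into the precedence relation by the same index shift. You simply spell out two steps the paper leaves implicit --- that distinct factors cannot share a pole (the central-character argument) and that $c_j\ge\abs{k_1-k_2}/2\ge 0$ rules out poles in $\Re(s)>0$ for the unitary case.
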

\begin{proof}
Write $\delta_i=\St_{k_i}(\rho_i)$ for $\rho_i\in\Irr_\cusp$ and $k_i\in \N$, $i=1,2$. It follows from \cite[Proposition 8.2]{MR701565} (and Lemma \ref{lem: cusp L}) that
\[
L(s,\delta_1,\delta_2^\vee)=\prod_{i=1}^{\min(k_1,k_2)} L(s+\frac{k_1+k_2}2-i,\rho_1,\rho_2^\vee).
\]
It follows from Lemma \ref{lem: cusp L} that at most one factor on the right hand side can have at most a simple pole at $s=u$ and that this is the case, if and only if 
\[
\rho_2\simeq\nu^{u+\frac{k_1+k_2}2-i}\rho_1
\]
for some $i=1,\dots,\min(k_1,k_2)$. Equivalently, if and only if
\[
\nu^{1-u}\rho_2\simeq\nu^{\frac{k_1+k_2}2+1-i}\rho_1
\]
for some $i=1,\dots,\min(k_1,k_2)$. 

Since $\nu^{1-u}\St_{k_2}(\rho_2)\simeq \St_{k_2}(\nu^{1-u}\rho_2)$, this is equivalent to the condition $\delta_1\prec \nu^{1-u}\delta_2$.
\end{proof}

\begin{lemma}\label{lem: mult} We have the following multiplicative properties.
\begin{enumerate}
\item \cite[Theorem 3.1]{MR701565} \label{part: gamma} Let $\pi=\pi_1\times\cdots \times \pi_k$ and $\tau$ be of Whittaker type. Then 
\[
\gamma(s,\pi,\tau,\psi)=\prod_{i=1}^k \gamma(s,\pi_i,\tau,\psi).
\] 
\item \cite[Proposition 9.4]{MR701565} \label{part: Leps} Let $\lambda=\delta_1\times\cdots \times \delta_k$  and $\xi=\delta_1' \times \cdots\times \delta_l'$ be standard modules in standard form. Then
\[
L(s,\lambda,\xi)=\prod_{i=1}^k \prod_{j=1}^l L(s,\delta_i,\delta_j')
\]
and consequently,
\[
\epsilon(s,\lambda,\xi,\psi)=\prod_{i=1}^k \prod_{j=1}^l \epsilon(s,\delta_i,\delta_j',\psi).
\] \qed
\end{enumerate}

\end{lemma}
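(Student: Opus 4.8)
The plan is to observe that both parts are pure citations to Jacquet–Piatetski-Shapiro–Shalika, applied to representations of Whittaker type, so the work is in organizing the reduction rather than in any new argument. Part \eqref{part: gamma} is literally \cite[Theorem 3.1]{MR701565}: the Rankin–Selberg integrals for $\pi=\pi_1\times\cdots\times\pi_k$ against $\tau$ factor through the successive parabolic inductions, and the functional equation is multiplicative in the inducing data; since $\gamma$ is defined via the $\epsilon$- and $L$-factors attached to the (unique up to scalar) Whittaker functional, the product formula follows directly. I would simply quote it, after noting that each $\pi_i$ and hence each intermediate induced representation is again of Whittaker type, so the factors $\gamma(s,\pi_i,\tau,\psi)$ are defined.

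For part \eqref{part: Leps}, first I would recall that for standard modules $\lambda$ and $\xi$ the factors $L(s,\lambda,\xi)$ and $\epsilon(s,\lambda,\xi,\psi)$ are by definition those attached to $\lambda$ and $\xi$ themselves (they are already of Whittaker type), and that $L(s,\delta_i,\delta_j')$, $\epsilon(s,\delta_i,\delta_j',\psi)$ make sense since essentially square-integrable representations are generic. The factorization $L(s,\lambda,\xi)=\prod_{i,j}L(s,\delta_i,\delta_j')$ is \cite[Proposition 9.4]{MR701565}: the point is that for a standard module in standard form, written $\delta_1\times\cdots\times\delta_k$ with $e(\delta_1)\ge\cdots\ge e(\delta_k)$ (equivalently $\delta_i\not\prec\delta_j$ for $i<j$), the local integral unfolds so that the $L$-factor is the product of the $L$-factors of the pieces, with no extra poles or cancellations — exactly the situation where JPSS prove the $L$-factor is multiplicative. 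Once the $L$-factorization and the $\gamma$-factorization (from part \eqref{part: gamma}, applied in both variables) are in hand, the $\epsilon$-factorization is forced by the defining relation $\gamma=\epsilon\cdot L(1-s,\lambda^\inv,\xi^\inv)/L(s,\lambda,\xi)$ together with the analogous factorization of $L(1-s,\lambda^\inv,\xi^\inv)$ — here one uses that $\lambda^\inv=\delta_k^\vee\times\cdots\times\delta_1^\vee$ is again in standard form and $(\delta_i^\vee)^\inv$-type bookkeeping matches up the indices.

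The only mild obstacle is purely expository: making sure the hypothesis ``in standard form'' is used correctly, i.e. that the ordering condition $\delta_i\not\prec\delta_j$ for $i<j$ is precisely what guarantees $\lambda$ is a standard module (not merely of Whittaker type) and that JPSS's multiplicativity applies without spurious pole contributions. Since all of this is already set up in the preliminaries and both statements are attributed verbatim to \cite{MR701565}, I would keep the proof to the two or three sentences above and mark it \qed.
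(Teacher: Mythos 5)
Your proposal is correct and matches what the paper does: both parts are indeed pure citations to \cite{MR701565}, and the paper's ``consequently'' for the $\epsilon$-factorization is exactly the derivation you sketch, combining the $L$-factorization, the $\gamma$-factorization from part \eqref{part: gamma} applied iteratively in each variable via the symmetry $\gamma(s,\pi,\tau,\psi)=\gamma(s,\tau,\pi,\psi)$, and the observation that $\lambda^\inv=\delta_k^\vee\times\cdots\times\delta_1^\vee$ is again in standard form. The paper supplies no written proof beyond the references and the \qed, so your expository reconstruction is precisely the argument being invoked.
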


\begin{lemma}\label{lem: cuspsupp}
For $\pi,\,\tau\in \Irr$, the $\gamma$-factor $\gamma(s,\pi,\tau,\psi)$ depends only on the cuspidal support of $\pi$ and $\tau$. More explicitly,
if $\tau_1,\dots,\tau_k\in \Irr_\cusp$ are such that $\tau\subseteq \tau_1\times\cdots \times \tau_k$ then
\[
\gamma(s,\pi,\tau,\psi)=\prod_{i=1}^k \gamma(s,\pi,\tau_i,\psi).
\]
\end{lemma}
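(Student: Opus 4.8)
The plan is to obtain the identity by routine manipulations with the multiplicativity of $\gamma$-factors, once the following is granted: for representations $\eta$ and $\xi$ of Whittaker type, $\gamma(s,\eta,\xi,\psi)$ depends only on the cuspidal supports of $\eta$ and $\xi$. This is classical -- a form of it is in \cite{MR701565} -- but I expect it to be the only genuine obstacle, since it is not a formal consequence of the properties of $L$ and $\epsilon$ recorded above; it comes from the construction of the $\gamma$-factor through Rankin--Selberg integrals. Everything else is bookkeeping.

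To carry out the bookkeeping, put $\sigma=\tau_1\times\cdots\times\tau_k$. Then $\sigma$ is of Whittaker type, being a product of generic representations, and its cuspidal support is $\{\tau_1,\dots,\tau_k\}$; this is also the cuspidal support of $\tau$, which is an irreducible constituent of $\sigma$, and hence of the standard module $\lambda(\tau)$. Consequently $\gamma(s,\lambda(\pi),\lambda(\tau),\psi)=\gamma(s,\lambda(\pi),\sigma,\psi)$ by the fact granted above. Using the definition $\gamma(s,\pi,\tau,\psi)=\gamma(s,\lambda(\pi),\lambda(\tau),\psi)$, the symmetry of the $\gamma$-factor (immediate from the symmetry of $L$ and $\epsilon$), the first part of Lemma~\ref{lem: mult} for the product $\sigma=\tau_1\times\cdots\times\tau_k$ against the Whittaker-type representation $\lambda(\pi)$, and the identities $\lambda(\tau_i)=\tau_i$ (each $\tau_i$ being cuspidal), I would conclude
\[
\gamma(s,\pi,\tau,\psi)=\gamma(s,\sigma,\lambda(\pi),\psi)=\prod_{i=1}^{k}\gamma(s,\tau_i,\lambda(\pi),\psi)=\prod_{i=1}^{k}\gamma(s,\pi,\tau_i,\psi),
\]
which is the asserted formula; since $\{\tau_1,\dots,\tau_k\}$ is the cuspidal support of $\tau$, it also makes the dependence on cuspidal supports explicit.

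It remains to recall why the quoted fact holds, as this is the heart of the matter. A representation $\eta$ of Whittaker type has a single generic composition factor $\eta_0$, which is, up to isomorphism, the unique irreducible generic representation with the cuspidal support of $\eta$; thus it suffices to show $\gamma(s,\eta,\xi,\psi)=\gamma(s,\eta_0,\xi,\psi)$. Using exactness of the $\psi$-twisted Jacquet functor along $U_n$, the Whittaker functional of $\eta$ vanishes on the largest subrepresentation contained in the kernel of a surjection of $\eta$ onto $\eta_0$, so the Whittaker model of $\eta_0$ is a subspace of that of $\eta$ inside $\Ind_{U_n}^{G_n}\psi$. The Rankin--Selberg functional equation of \cite{MR701565} defining $\gamma(s,\eta,\xi,\psi)$ therefore restricts to the one defining $\gamma(s,\eta_0,\xi,\psi)$, and the two proportionality constants must agree because the defining integrals do not all vanish. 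This is the step I expect to be delicate; everything preceding it is formal. As an alternative one can bypass this paragraph: first reduce $\lambda(\pi)$ and $\lambda(\tau)$ to their essentially square-integrable factors by the first part of Lemma~\ref{lem: mult} and symmetry, and then invoke from \cite{MR701565} the explicit expression of $\gamma(s,\St_a(\rho),\St_b(\rho'),\psi)$ as a product, over the cuspidal supports, of $\gamma$-factors of cuspidal pairs.
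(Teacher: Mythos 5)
Your strategy — reduce to standard modules, invoke multiplicativity of the $\gamma$-factor, and justify the independence of $\gamma$ of the choice of Whittaker-type model for a given cuspidal support by an inclusion of Whittaker spaces — is exactly the paper's. The gap is in your justification of that inclusion: you claim the Whittaker-type representation $\eta$ admits a surjection onto its unique generic constituent $\eta_0$, so that the Whittaker functional factors through $\eta_0$. This is false in general. Take $\eta=\nu^{1/2}\times\nu^{-1/2}$ on $\GL_2(E)$, a standard module in standard form: its Langlands quotient is the trivial character, and the generic constituent $\St_2(\trivchar)$ is a \emph{sub}representation, not a quotient. The same phenomenon occurs for every reducible $\lambda(\tau)$, whose unique irreducible quotient $\tau$ is then non-generic; there is no surjection onto the generic constituent, and your argument does not apply to precisely the representations the lemma is about.

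The inclusion $\Wh(\eta_0)\subseteq\Wh(\eta)$ you want is nevertheless correct, but for a different reason: the image $\Wh(\eta)\subseteq\Ind_{U_n}^{G_n}\psi$ is finite length and so has an irreducible subrepresentation, which is automatically generic (it is a submodule of $\Ind_{U_n}^{G_n}\psi$), hence isomorphic to $\eta_0$, and the uniqueness of Whittaker models for irreducible generic representations identifies it with $\Wh(\eta_0)$. The paper avoids the issue entirely: after reducing by Lemma~\ref{lem: mult}~\eqref{part: gamma} to $\tau=\St_k(\rho)\in\Irr_\sqr$, it compares with $\nu^{(k-1)/2}\rho\times\cdots\times\nu^{(1-k)/2}\rho$ (descending exponents), of which $\St_k(\rho)$ is a \emph{sub}representation, so the Whittaker-model inclusion is manifest without any socle argument. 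Your final alternative — reduce to discrete-series factors and cite \cite{MR701565} for a product formula for $\gamma$ of discrete-series pairs — is also in need of the same input: \cite{MR701565} gives the product decomposition for $L$-factors of discrete series, and the corresponding statement for $\gamma$ is exactly what this Whittaker-model argument is supplying.
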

\begin{proof}
By passing to the standard modules $\lambda(\pi)$ and $\lambda(\tau)$ and applying Lemma \ref{lem: mult} \eqref{part: gamma}  we may assume that $\tau\in \Irr_\sqr$.
In that case, write $\tau=\St_k(\rho)$ for $\rho\in\Irr_\cusp$. Then, the Whittaker space for $\tau$ is contained in that for  $\nu^{(k-1)/2}\rho\times \cdots\times \nu^{(1-k)/2}\rho$. Since the gamma factor is a quotient of Rankin-Selberg integrals for any choice of Whittaker functions that provide non-zero integrals,  $\gamma(s,\pi,\tau,\psi)=\gamma(s,\pi,\nu^{(k-1)/2}\rho\times \cdots\times \nu^{(1-k)/2}\rho,\psi)$. The Lemma now follows by applying Lemma \ref{lem: mult} \eqref{part: gamma}  to the right hand side.
\end{proof}

\subsection{Distinguished representations}

Assume from now on that $F$ is a $p$-adic field, $E/F$ a quadratic extension and $\psi|_F=1$. Let $\gal$ denote the associated Galois action. In particular, we have $\psi^\gal=\psi^{-1}$.
It is further obvious from the definitions of the local factors that for $\pi$ and $\tau$ either in $\Irr$ or of Whittaker type we have 
\begin{equation}\label{eq: Leps}
L(s,\pi,\tau)=L(s,\pi^\gal,\tau^\gal) \ \ \ \text{and}\ \ \ \epsilon(s,\pi,\tau,\psi)= \epsilon(s,\pi^\gal,\tau^\gal,\psi^\gal). 
\end{equation}

Let $\eta_{E/F}$ be the order two character of $F^\times$ that is trivial on norms from $E^\times$ to $F^\times$ and fix a character $\eta$ of $E^\times$ extending $\eta_{E/F}$.
Let $H_n=\GL_n(F)$. 
\begin{definition}
A representation $\pi$ of $G_n$ is \emph{distinguished} if
\[
\Hom_{H_n}(\pi,\trivchar)\ne 0
\]
and \emph{$\eta_{E/F}$-distinguished} if
\[
\Hom_{H_n}(\pi,\eta_{E/F}\circ \det)\ne 0.
\]
\end{definition}
Clearly, $\pi$ is distinguished if and only if $\eta\pi$ is $\eta_{E/F}$-distinguished.
Flicker showed the following:
\begin{lemma}\label{lem: flick}  \cite[Propositions 11 and 12]{MR1111204}
For $\pi\in\Irr(n)$ we have
\begin{enumerate}
\item $\dim\Hom_{H_n}(\pi,\trivchar)\le 1$.
\item \label{part: sym} If $\pi$ is distinguished then $\pi^\gal\simeq\pi^\vee$.
\end{enumerate} \qed
\end{lemma}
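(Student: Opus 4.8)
The plan is to obtain both parts from the Gelfand--Kazhdan technique applied to the symmetric pair $(G_n,H_n)=(\GL_n(E),\GL_n(F))$ with the anti-involution $\theta(g)=\gal(g)^{-1}$ of $G_n$, which satisfies $\theta(H_n)=H_n$ (it fixes $H_n$ setwise, while $\gal$ fixes it pointwise). For a representation $\pi$ of $G_n$ write $\pi^\theta$ for its \emph{$\theta$-contragredient}, the representation on the smooth dual of $\pi$ with $\langle\pi^\theta(g)\xi,v\rangle=\langle\xi,\pi(\theta(g))v\rangle$; since $g\mapsto\theta(g)^{-1}=\gal(g)$ is an automorphism, $\pi^\theta\simeq(\pi^\gal)^\vee$. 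I will also use the elementary remark that, because $\gal$ and $\inv$ are automorphisms of $G_n$ preserving $H_n$ (and $\pi^\inv\simeq\pi^\vee$), distinction of $\pi$ forces distinction of $\pi^\gal$, of $\pi^\inv\simeq\pi^\vee$, and hence of $(\pi^\gal)^\inv\simeq(\pi^\gal)^\vee$.

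The geometric heart of the argument -- which I expect to be the main obstacle -- is the assertion that \emph{every distribution on $G_n$ that is invariant under both left and right translation by $H_n$ is also invariant under $\theta$}. To prove this one stratifies $G_n$ by its $H_n\times H_n$-orbits; these are conveniently parametrised through the map $g\mapsto g\,\gal(g)^{-1}$, which lands in $\{x\in G_n:x\gal(x)=1\}$ and identifies $H_n\backslash G_n/H_n$ with a set of twisted-conjugacy classes having finitely many closed orbits, indexed by a discriminant-type invariant (over $F$ these correspond to equivalence classes of nondegenerate Hermitian forms). By Bernstein's localisation principle it is enough to treat a neighbourhood of each orbit; there one descends, by Harish-Chandra's method, to the normal bundle, whose isotropy groups are products of smaller general linear and unitary groups over $F$ and whose slice representations are $\theta$-stable, and one concludes by induction on $n$.

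Granting this, the two statements follow. For part (1), the Gelfand--Kazhdan criterion gives
\[
\dim\Hom_{H_n}(\pi,\trivchar)\cdot\dim\Hom_{H_n}\bigl((\pi^\gal)^\vee,\trivchar\bigr)\le 1
\qquad(\pi\in\Irr(n)).
\]
If $\pi$ is not distinguished the first factor is $0$; if it is distinguished, then the second factor is positive by the remark above, so both factors equal $1$. In either case $\dim\Hom_{H_n}(\pi,\trivchar)\le1$. For part (2), let $\pi$ be distinguished, choose $0\ne\ell\in\Hom_{H_n}(\pi,\trivchar)$, and choose (again by the remark) $0\ne\ell'\in\Hom_{H_n}(\pi^\vee,\trivchar)$. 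The pair $(\ell,\ell')$ yields a nonzero bi-$H_n$-invariant distribution $D$ on $G_n$, $D(f)=(\ell\otimes\ell')(\pi(f))$, where the finite-rank operator $\pi(f)$ is viewed in the tensor product of $\pi$ with its smooth dual; a short computation gives $D^\theta(f)=(\ell\otimes\ell')(\pi^\gal(\check f))$ with $\check f(g)=f(g^{-1})$, so that $D$ is at once a nonzero coefficient distribution of $\pi$ and, via $\theta$-invariance, one of $(\pi^\gal)^\vee$. By the disjointness of spectral supports of such coefficient distributions this forces $\pi\simeq(\pi^\gal)^\vee=(\pi^\vee)^\gal$, equivalently $\pi^\gal\simeq\pi^\vee$. (Alternatively, part (2) is a consequence of the Flicker--Rallis description of distinguished representations as base-change lifts from a unitary group, but the argument above stays within the local theory.)
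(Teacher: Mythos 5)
The paper gives no proof of this lemma: it is stated with a \qed and attributed to Flicker, \cite[Propositions 11 and 12]{MR1111204}. Your proposal reconstructs the Gelfand--Kazhdan argument that Flicker actually uses, and the outline is correct. In particular the choice of anti-involution $\theta(g)=\gal(g)^{-1}$ is the right one (note that the ``obvious'' alternative $\theta(g)={}^t\gal(g)$ would instead give $\pi^{\theta}\simeq\pi^{\gal}$, leading to the false conclusion $\pi\simeq\pi^{\gal}$, and indeed the distribution-invariance claim already fails for it on $\GL_1$); the computation $\pi^{\theta}\simeq(\pi^{\gal})^{\vee}$ is correct; the observation that distinction passes to $\pi^{\gal}$, to $\pi^{\iota}\simeq\pi^{\vee}$ and hence to $(\pi^{\gal})^{\vee}$ is exactly what is needed to feed both parts of the Gelfand--Kazhdan criterion. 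Two caveats. First, the geometric heart --- that every bi-$H_n$-invariant distribution on $G_n$ is $\theta$-invariant --- is the bulk of Flicker's proof; your sketch via the map $g\mapsto g\,\gal(g)^{-1}$, Bernstein localisation, and Harish-Chandra descent is the right strategy but is far from a proof, and the aside about Hermitian forms is not quite the relevant parametrisation for the pair $(\GL_n(E),\GL_n(F))$ (that picture belongs to the pair with a unitary group). Second, ``disjointness of spectral supports'' in part (2) is shorthand for the standard annihilator argument in the Hecke algebra (if a nonzero relative character of the irreducible $\pi$ is also a relative character of the irreducible $\pi'$, then the annihilators of $\pi$ and $\pi'$ in $\mathcal{H}(G_n,K)$ cannot be coprime, hence $\pi\simeq\pi'$); this is correct but worth making explicit. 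With those understood, the approach is the one taken in the cited source.
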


We record here some results regarding distinguished representaitons.
\begin{proposition}\label{prop: dist}
\begin{enumerate}
\item (\cite[Corollary 1.6]{MR2063106} and \cite{MR2075482})   Let $\delta\in\Irr_\sqr$ be such that $\delta^\gal\simeq\delta^\vee$. Then $\delta$ is either distinguished or $\eta_{E/F}$-distinguished but not both.
\item (\cite[Theorem 1.3]{MR2448081}, see \cite[\S 4.4]{MR2146859} or \cite[Corollary 4.2]{MR2500974}) Let $\delta=\St_k(\rho)\in\Irr_\sqr$. Then $\delta$ is distinguished if and only if one of the following two conditions hold
\begin{itemize}
\item $\rho$ is distinguished and $k$ is odd 
\item $\rho$ is $\eta_{E/F}$-distinguished and $k$ is even.
\end{itemize}
\end{enumerate}
\end{proposition}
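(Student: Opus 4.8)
Since the proposition assembles results established in the cited works, my ``proof'' is really a plan for which machinery to invoke. For part~(1) I would go through Asai (twisted tensor) $L$-functions. First, $\delta^\gal\simeq\delta^\vee$ forces $e(\delta)=0$ (as $e(\delta^\gal)=e(\delta)$ whereas $e(\delta^\vee)=-e(\delta)$), so $\delta\in\Irr_\usqr$; then by Lemma~\ref{lem: sqr L} applied with $u=0$ — using $\delta\prec\nu\delta$ — the Rankin--Selberg factor $L(s,\delta,\delta^\gal)=L(s,\delta,\delta^\vee)$ has an \emph{order one} pole at $s=0$. Writing $\phi$ for the Langlands parameter of $\delta$, I would then use the standard factorization $L(s,\delta,\delta^\gal)=L(s,\delta,\operatorname{As})\,L(s,\eta\delta,\operatorname{As})$, coming from inductivity of $L$-factors and the splitting $\Ind_{W_E}^{W_F}(\phi\otimes\phi^\gal)\simeq\operatorname{As}(\phi)\oplus(\operatorname{As}(\phi)\otimes\eta_{E/F})$ of $L$-parameters, together with the theorems of Anandavardhanan--Kable--Tandon and Kable, which equate a pole at $s=0$ of $L(s,\delta,\operatorname{As})$, resp.\ of $L(s,\eta\delta,\operatorname{As})$, with distinction, resp.\ $\eta_{E/F}$-distinction, of $\delta$. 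Since $L$-factors never vanish, the pole orders at $s=0$ of the two Asai factors are non-negative integers summing to $1$, so \emph{exactly one} of them is singular there; this delivers both the existence and the exclusivity in part~(1).

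For part~(2) the plan is to propagate distinction between $\rho$ and $\St_k(\rho)$ across the parity shift. The ``only if'' direction is the serious one: I would filter the restriction of $\St_k(\rho)$ to $H_n$ (or rather its inducing data) along the $H_n$-orbits on the relevant flag variety, in Bernstein--Zelevinsky/Mackey fashion, and show that only one orbit can support an $H_n$-invariant functional, and only when $\rho$ is distinguished and $k$ odd, or $\rho$ is $\eta_{E/F}$-distinguished and $k$ even. The ``if'' direction is the reverse construction: from the realization of $\St_k(\rho)$ inside a product over the segment $\nu^{(1-k)/2}\rho,\dots,\nu^{(k-1)/2}\rho$, whose outer members pair into conjugate-dual pairs $\nu^{-j}\rho,\nu^{j}\rho$ (so $(\nu^j\rho)^{\vee\gal}\simeq\nu^{-j}\rho$, using $\rho^\vee\simeq\rho^\gal$) that automatically carry an $H$-invariant pairing, one assembles a candidate functional, using the distinction of $\rho$ for the central data, and checks — this is precisely where the parity enters — that it factors through $\St_k(\rho)$; a smoother variant realizes $\St_k(\rho)$ via residues of cuspidal Eisenstein series and computes a period. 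This is carried out in \cite{MR2448081}; see also \cite{MR2146859,MR2500974}. One could instead reduce part~(1) to its cuspidal case, since $\delta=\St_k(\rho)$ with $\delta^\gal\simeq\delta^\vee$ forces $\rho^\gal\simeq\rho^\vee$ by comparing cuspidal supports, and then feed the cuspidal dichotomy into the part~(2) bookkeeping.

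The main obstacle is that neither deep input is elementary. The existence half of part~(1) rests on the analytic theory of Asai $L$-functions, which ultimately descends from global methods via base change (Flicker, Kable); and the ``only if'' half of part~(2) requires the full geometric analysis of $H_n$-orbits on $G_n/H_n$. I would not reproduce these, and would simply cite \cite{MR2063106,MR2075482} for part~(1) and \cite{MR2448081} for part~(2).
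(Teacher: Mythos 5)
The paper gives no proof of this proposition: it is stated purely as a recollection of known results, with the citations you also give. Your sketch is a faithful outline of how those references actually establish the two statements — part~(1) through the pole of $L(s,\delta,\delta^\gal)$ at $s=0$ (correctly deduced from the paper's Lemma~\ref{lem: sqr L} once you observe $e(\delta)=0$ and $\delta\prec\nu\delta$), the Asai factorization $L(s,\delta,\delta^\gal)=L(s,\delta,\operatorname{As})L(s,\eta\delta,\operatorname{As})$, and the Anandavardhanan--Kable--Tandon/Kable equivalence of poles with (twisted) distinction; part~(2) through the orbit/period analysis in the cited works. The non-vanishing of $L$-factors giving pole orders summing to exactly one is exactly the clean way to extract both existence and exclusivity. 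Your final remark — that part~(1) for $\St_k(\rho)$ reduces to the cuspidal dichotomy for $\rho$ via part~(2), because $\St_k(\rho)^\gal\simeq\St_k(\rho)^\vee$ forces $\rho^\gal\simeq\rho^\vee$ and $\eta\St_k(\rho)\simeq\St_k(\eta\rho)$ — is correct and is a tidy logical economy that the paper itself does not spell out. In short: nothing to contest; both you and the paper defer the substance to \cite{MR2063106,MR2075482,MR2448081}, and your supporting bookkeeping is sound.
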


\begin{proposition}\label{prop: dist std}
Let $\lambda$ be a standard module. Then $\lambda$ is distinguished if and only if
there exist $\delta_i\in \Irr_\sqr$, $i=1,\dots,t$ with $e(\delta_1)\ge \cdots\ge e(\delta_t)\ge 0$ and distinguished $\tau_j\in \Irr_\sqr$, $j=1,\dots,s$ such that
\begin{equation}\label{eq: dist std}
\lambda\simeq \delta_1^\gal\times \cdots \times \delta_t^\gal \times \tau_1\times\cdots\times \tau_s\times\delta_t^\vee\times\cdots\times\delta_1^\vee. 
\end{equation}  
\end{proposition}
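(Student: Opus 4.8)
The statement coincides, in substance, with Gurevich's classification of distinguished standard modules \cite{MR3421655}, and the plan is to reconstruct it from the Mackey-theoretic analysis of $H$-invariant functionals on parabolically induced representations. For the ``if'' direction I would argue by induction on $t$, peeling off the outer blocks. The base case $t=0$ requires that $\tau_1\times\cdots\times\tau_s$ be distinguished; this is the well-known fact that a product of distinguished representations is distinguished (and it also falls out of the orbit picture below). For the inductive step it is enough to show that if $\pi$ is a distinguished representation of $G_m$ and $\delta\in\Irr_\sqr$ is a representation of $G_r$, then $\delta^\gal\times\pi\times\delta^\vee$ is distinguished. I would do this by writing down the Mackey-type filtration of $(\delta^\gal\times\pi\times\delta^\vee)|_{H_{2r+m}}$ along the $H_{2r+m}$-orbits on the flag variety attached to the Levi $G_r\times G_m\times G_r$ and selecting the orbit that Galois-interchanges the two outer blocks: it is open, its stabilizer meets the Levi (up to conjugacy) in a twisted-diagonal copy of $G_r\times H_m$, and on the resulting quotient of the restricted representation one has the $H$-invariant functional obtained by tensoring an $H_m$-invariant functional on $\pi$ with the canonical $G_r$-invariant pairing of $\delta^\gal$ against its $\gal$-twisted contragredient $\delta^\vee=(\delta^\gal)^{\gal\vee}$. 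The key feature here is that this orbit imposes no condition at all on $\delta$. Iterating over the outer blocks then produces a nonzero $H$-invariant functional on the representation in \eqref{eq: dist std}.

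For the ``only if'' direction --- the substantive one --- I would take $\lambda=\delta_1\times\cdots\times\delta_k$ distinguished, in standard form, so that $e(\delta_1)\ge\cdots\ge e(\delta_k)$ (recall $e(\delta^\gal)=e(\delta)$ and $e(\delta^\vee)=-e(\delta)$), and run the same orbit filtration for the Levi $G_{n_1}\times\cdots\times G_{n_k}$. A nonzero element of $\Hom_{H}(\lambda,\trivchar)$ must be supported on some orbit; since the inducing data $\delta_1,\dots,\delta_k$ are irreducible, the orbits that can carry a nonzero functional are those given by a matching of $\{1,\dots,k\}$ into pairs and singletons, where a pair $\{i,j\}$ contributes nonzero only if $\delta_j\simeq\nu^{c}\delta_i^{\gal\vee}$ for a scalar $c$ determined by the positions, and a singleton $\{i\}$ only if $\nu^{c'}\delta_i$ is distinguished, again with $c'$ position-determined. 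The decisive observation is that the exponent ordering of a standard module, together with the positivity of the relevant modulus characters, forces the block $\delta_1$ of largest exponent either to be a singleton --- which needs $e(\delta_1)=0$ --- or to be matched with $c=0$ to a block of most negative exponent, which is then isomorphic to $\delta_1^{\gal\vee}$. Deleting such an outer pair leaves an inner standard module which is again distinguished (the pairing condition being empty), and I would induct on $k$; after relabeling $\delta_i\mapsto\delta_i^\gal$ the accumulated outer pairs become the blocks $\delta_i^\gal,\delta_i^\vee$ of \eqref{eq: dist std} with $e(\delta_i)\ge 0$. The induction bottoms out at a tempered standard module (all $e(\delta_i)=0$), which is in fact irreducible --- no $\prec$-relation can hold between two elements of $\Irr_\usqr$ --- so Lemma \ref{lem: flick} gives $\lambda^\gal\simeq\lambda^\vee$, and then the singleton/pair alternative together with Proposition \ref{prop: dist} identifies the singleton blocks as distinguished elements of $\Irr_\sqr$ (these are the $\tau_j$, the parity conditions on them being exactly those of Proposition \ref{prop: dist}) and the remaining blocks as further pairs of the shape $\delta^\gal\times\delta^\vee$.

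The step I expect to be the main obstacle is the orbit analysis underlying the second part: one has to enumerate the $H$-orbits on the flag varieties involved (there are genuinely more than in the split, Whittaker, situation), work out the twisted-distinction and modulus-character conditions contributed by each, and then verify that the ordering of exponents built into a standard module rules out every orbit except the nested one. This is the technical heart of Gurevich's theorem, and I do not see a shortcut. A lesser, purely bookkeeping, difficulty is to keep ordinary distinction separate from $\eta_{E/F}$-distinction for the square-integrable constituents --- deciding which segments get absorbed into outer pairs $\delta^\gal\times\delta^\vee$ and which survive as genuinely distinguished factors $\tau_j$ --- and Proposition \ref{prop: dist} is precisely what is needed to settle this.
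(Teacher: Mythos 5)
Your plan is, in effect, to re-derive Gurevich's structure theorem for distinguished standard modules from first principles, whereas the paper's proof is a thin wrapper around two results of \cite{MR3421655} that it uses as black boxes. For the ``if'' direction the paper observes that the inner block $\tau_1\times\cdots\times\tau_s$ is distinguished by the usual closed-orbit argument (citing \cite[Lemma 6.4]{MR2787356}) and then invokes \cite[Proposition 2.3]{MR3421655} to absorb the outer blocks $\delta_i^\gal,\,\delta_i^\vee$; your open-orbit induction is a sound sketch of exactly what that proposition proves, so here the two routes coincide in substance, and the only improvement would be to cite the proposition rather than redo the Mackey analysis. The real divergence is in the ``only if'' direction: the paper cites \cite[Proposition 3.4]{MR3421655}, which already furnishes a realization $\lambda=\delta_1'\times\cdots\times\delta_k'$ in standard form together with an involution $w\in S_k$ such that $(\delta_i')^\gal\simeq(\delta_{w(i)}')^\vee$ for all $i$ and $\delta_i'$ distinguished whenever $w(i)=i$, after which the proposition is pure bookkeeping (reorder the $2$-cycles by decreasing exponent, rename, and use independence of the standard module on its standard-form realization). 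You instead propose to re-establish this involution structure by enumerating $H$-orbits on the flag variety and showing that the exponent ordering forces all the unramified twists $\nu^c$ appearing in the matching conditions to be trivial --- and you candidly state that you do not see how to carry this out. That is the genuine gap: what you describe as ``the technical heart of Gurevich's theorem'' is precisely what \cite[Proposition 3.4]{MR3421655} packages up, and without either reproducing that argument in full or citing it, your proof is incomplete. Two smaller points worth flagging if you do pursue the from-scratch route: a singleton block $\delta_i$ in the matching a priori only needs $\nu^{c'}\delta_i$ distinguished for a position-dependent $c'$, so $e(\delta_i)=0$ is a conclusion (from $c'=0$), not a hypothesis, and the claim that ``deleting an outer pair leaves a distinguished inner standard module'' is not automatic from the distinction of $\lambda$ alone --- it requires re-running the orbit argument on the inner blocks --- which is another reason the clean statement of \cite[Proposition 3.4]{MR3421655}, giving the involution on all $k$ blocks simultaneously, is the right tool.
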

\begin{proof}
If $\lambda$ is distinguished then it follows from \cite[Proposition 3.4]{MR3421655} that $\lambda$ can be realized as $\lambda=\delta_1'\times \cdots \times \delta_k'$ in standard form, so that there exists an involution $w\in S_k$ such that $(\delta_i')^\gal \simeq (\delta_{w(i)}')^\vee$ for all $i=1,\dots,k$ and $\delta_i'$ is distinguished whenever $w(i)=i$. Let $k=s+2t$ where $s=\abs{\{i: w(i)=i\}}$, $\{i_1,\dots,i_s\}=\{i: w(i)=i\}$ and $(\tau_1,\dots,\tau_s)=(\delta_{i_1}',\dots,\delta_{i_s}')$. Let $\{j_1,\dots,j_t\}=\{i: w(i)\ne i \text{ and either } e(\delta_i')>0 \text{ or } e(\delta_i')=0 \text{ and }i<w(i)\}$ be ordered in such a way that $e(\delta_{j_1}')\ge \cdots \ge e(\delta_{j_t}')$ and set $(\delta_1,\dots,\delta_t)=((\delta_{j_1}')^\gal,\dots,(\delta_{j_t}')^\gal)$. Then \eqref{eq: dist std} holds by the independence of the standard module on a standard form realization.

Assume now that $\lambda$ is of the form \eqref{eq: dist std}. By a standard, closed orbit contribution, argument $\tau_1\times\cdots\times\tau_s$ is distinguished (see e.g. \cite[Lemma 6.4]{MR2787356}) and it therefore follows from \cite[Proposition 2.3]{MR3421655} that $\lambda$ is distinguished.
\end{proof}

\section{The local root number of a distinguished representation}\label{sec: non-arch root}

We first record straight-forward consequences of the basic properties of $L$ and $\epsilon$ factors for representations that are isomorphic to the Galois twist of their smooth dual.
\begin{lemma}\label{lem: L prop}
Let $\pi,\,\tau\in \Irr$ satisfy $\pi^\gal\simeq \pi^\vee$ and $\tau^\gal\simeq \tau^\vee$. Then we have
\begin{enumerate}
\item \label{part: vee} $L(s,\pi,\tau)=L(s,\pi^\vee,\tau^\vee)$.
\item \label{part: gamhalf} Therefore, if $L(s,\pi,\tau)$ is holomorphic at $s=1/2$ then $\gamma(1/2,\pi,\tau,\psi)=\epsilon(1/2,\pi,\tau,\psi)$.
\item \label{part: gamsqr} $\gamma(s,\pi,\tau,\psi)\gamma(1-s,\pi,\tau,\psi)=1=\epsilon(s,\pi,\tau,\psi)\epsilon(1-s,\pi,\tau,\psi)$.
\item \label{part: epsqr} In particular, $\epsilon(1/2,\pi,\tau,\psi)^2=1$. 
\end{enumerate}
\end{lemma}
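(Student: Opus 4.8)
The statement is a routine consequence of the three basic properties of $L$ and $\epsilon$ factors listed just before the lemma, combined with the two hypotheses $\pi^\gal\simeq\pi^\vee$ and $\tau^\gal\simeq\tau^\vee$ and the defining relation \eqref{eq: gamma}. The plan is to prove the four parts in order, each feeding into the next.

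For part \eqref{part: vee}, I would apply \eqref{eq: Leps} with the pair $(\pi^\gal,\tau^\gal)$ to get $L(s,\pi,\tau)=L(s,\pi^\gal,\tau^\gal)$, and then use the hypotheses $\pi^\gal\simeq\pi^\vee$, $\tau^\gal\simeq\tau^\vee$ to rewrite the right-hand side as $L(s,\pi^\vee,\tau^\vee)$. For part \eqref{part: gamhalf}, I substitute $s=1/2$ into \eqref{eq: gamma}; since $1-s=s=1/2$ there, the quotient $L(1-s,\pi^\vee,\tau^\vee)/L(s,\pi,\tau)$ becomes $L(1/2,\pi^\vee,\tau^\vee)/L(1/2,\pi,\tau)$, which equals $1$ by part \eqref{part: vee} provided $L(s,\pi,\tau)$ is holomorphic (hence nonzero and finite) at $s=1/2$; note $L$-factors are nowhere vanishing, so the only issue is a possible pole, which is exactly what the hypothesis rules out. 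Thus $\gamma(1/2,\pi,\tau,\psi)=\epsilon(1/2,\pi,\tau,\psi)$.

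For part \eqref{part: gamsqr}, I start from property (3), namely $\gamma(s,\pi,\tau,\psi)\gamma(1-s,\pi^\inv,\tau^\inv,\psi^{-1})=1$ and the analogous identity for $\epsilon$. Since $\pi^\gal\simeq\pi^\vee\simeq\pi^\inv$ (using $\pi^\vee\simeq\pi^\inv$ for irreducibles, recorded in the excerpt) and likewise for $\tau$, together with $\psi^\gal=\psi^{-1}$, applying the Galois-invariance \eqref{eq: Leps} of the factors gives $\gamma(1-s,\pi^\inv,\tau^\inv,\psi^{-1})=\gamma(1-s,(\pi^\inv)^\gal,(\tau^\inv)^\gal,\psi)=\gamma(1-s,\pi,\tau,\psi)$; substituting back yields $\gamma(s,\pi,\tau,\psi)\gamma(1-s,\pi,\tau,\psi)=1$, and the same computation handles $\epsilon$. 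Finally, part \eqref{part: epsqr} is the special case $s=1/2$ of the $\epsilon$-identity in part \eqref{part: gamsqr}, giving $\epsilon(1/2,\pi,\tau,\psi)^2=1$.

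The only point requiring a moment of care — and the closest thing to an obstacle — is bookkeeping the interplay between $\gal$, $\inv$ and $\vee$: one must use that for irreducible $\pi$ the three operations $\pi^\inv$ and $\pi^\vee$ agree, that the hypothesis identifies both with $\pi^\gal$, and that $\psi^\gal=\psi^{-1}$, so that the "dual-side" factors appearing in property (3) collapse back to the original ones. Everything else is formal substitution; no analytic input beyond non-vanishing of $L$-factors and the stated holomorphy hypothesis is needed.
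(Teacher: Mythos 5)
The paper states this lemma without proof, prefacing it only with the remark that these are ``straight-forward consequences of the basic properties of $L$ and $\epsilon$ factors''; your write-up correctly supplies exactly the intended routine verification, combining \eqref{eq: gamma}, \eqref{eq: Leps}, the listed functional equation (3), and the identifications $\pi^\vee\simeq\pi^\inv$ and $\psi^\gal=\psi^{-1}$. One small point worth making explicit: \eqref{eq: Leps} literally states Galois invariance only for $L$ and $\epsilon$, so the Galois invariance of $\gamma$ that you invoke should be noted to follow from those two together with the compatibility $(\pi^\gal)^\vee\simeq(\pi^\vee)^\gal$ and the definition \eqref{eq: gamma}; with that sentence added, the argument is complete.
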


We recall the following result from Ok's thesis \cite{MR2716711}.
\begin{lemma}[Ok]\label{lem: Ok}
If $\rho_1,\rho_2\in\Irr_\cusp$ are both distinguished then $\gamma(1/2,\rho_1,\rho_2,\psi)=1$. \qed
\end{lemma}
We therefore also have
\begin{corollary}\label{cor: Ok}
If $\rho_1,\rho_2\in\Irr_\cusp$ are both distinguished then $\epsilon(1/2,\rho_1,\rho_2,\psi)=1$.
\end{corollary}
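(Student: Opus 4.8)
The plan is to deduce Corollary~\ref{cor: Ok} directly from Lemma~\ref{lem: Ok} together with the structural facts about $L$-factors of cuspidal representations collected in Lemma~\ref{lem: cusp L} and the general properties of $\gamma$ and $\epsilon$. First I would observe that since $\rho_1,\rho_2\in\Irr_\cusp$ are distinguished, Lemma~\ref{lem: flick}\eqref{part: sym} gives $\rho_i^\gal\simeq\rho_i^\vee$, so in particular both representations are unitary (a distinguished cuspidal representation has unitary central character, hence is unitary): concretely $\rho_i\in\Irr_\ucusp$. This is the input needed to apply Lemma~\ref{lem: cusp L}.

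Next I would unwind the relation \eqref{eq: gamma}, namely
\[
\gamma(1/2,\rho_1,\rho_2,\psi)=\epsilon(1/2,\rho_1,\rho_2,\psi)\,\frac{L(1/2,\rho_1^\vee,\rho_2^\vee)}{L(1/2,\rho_1,\rho_2)}.
\]
By Lemma~\ref{lem: cusp L} applied to the unitary cuspidal pairs $(\rho_1,\rho_2^\vee)$ and $(\rho_1^\vee,\rho_2^\vee{}^\vee)=(\rho_1^\vee,\rho_2)$ — note one must rewrite $L(s,\rho_1,\rho_2)=L(s,\rho_1,(\rho_2^\vee)^\vee)$ to match the format of the lemma — both $L$-factors are holomorphic and non-vanishing at $s=1/2$ since $\Re(1/2)\neq 0$; indeed an $L$-factor of Rankin--Selberg type is of the form $P(q^{-s})^{-1}$ and never vanishes. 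Here I would invoke Lemma~\ref{lem: L prop}\eqref{part: vee}, which gives $L(s,\rho_1,\rho_2)=L(s,\rho_1^\vee,\rho_2^\vee)$ under the hypotheses $\rho_i^\gal\simeq\rho_i^\vee$, so the two $L$-factors in the displayed ratio are literally equal and the quotient is $1$. Hence $\gamma(1/2,\rho_1,\rho_2,\psi)=\epsilon(1/2,\rho_1,\rho_2,\psi)$; alternatively this is exactly Lemma~\ref{lem: L prop}\eqref{part: gamhalf}. Combining with Lemma~\ref{lem: Ok} yields $\epsilon(1/2,\rho_1,\rho_2,\psi)=\gamma(1/2,\rho_1,\rho_2,\psi)=1$.

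This is essentially a one-line corollary once Lemma~\ref{lem: L prop} and the unitarity remark are in place, so I do not anticipate a genuine obstacle. The only point requiring a little care is the passage from ``distinguished cuspidal'' to ``unitary'': one should note that distinction forces the central character to be trivial on $F^\times N_{E/F}(E^\times)$-cosets appropriately, or more simply cite that a distinguished irreducible representation is $\gal$-selfdual and a $\gal$-selfdual cuspidal representation of $G_n$ is unitary up to an unramified twist — but in fact one does not even need full unitarity, only that $\rho_1$ and $\nu^u\rho_1$ are inequivalent for $u=1/2$ is irrelevant; what one needs is that neither $L(s,\rho_1,\rho_2)$ nor $L(s,\rho_1^\vee,\rho_2^\vee)$ has a pole at $s=1/2$, and by Lemma~\ref{lem: L prop}\eqref{part: vee} it suffices to check one of them. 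If $\rho_2\not\simeq\nu^{1/2}\rho_1$ then $L(1/2,\rho_1,\rho_2^\vee)=1$ by Lemma~\ref{lem: cusp L}; and $\rho_2\simeq\nu^{1/2}\rho_1$ would contradict $\rho_2^\gal\simeq\rho_2^\vee$ combined with $\rho_1^\gal\simeq\rho_1^\vee$ (it would force a nontrivial real part in the central exponent, incompatible with $\gal$-selfduality). Thus the holomorphy at $1/2$ holds, and the argument closes. I would therefore simply write: by Lemma~\ref{lem: flick}\eqref{part: sym} we have $\rho_i^\gal\simeq\rho_i^\vee$, so $\rho_i\in\Irr_\ucusp$, whence by Lemma~\ref{lem: cusp L} the factor $L(s,\rho_1,\rho_2)$ is holomorphic at $s=1/2$, and the claim follows from Lemma~\ref{lem: L prop}\eqref{part: gamhalf} and Lemma~\ref{lem: Ok}.
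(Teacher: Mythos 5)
Your proof is correct and follows exactly the same route as the paper: deduce $\rho_i^\gal\simeq\rho_i^\vee$ and hence $\rho_i\in\Irr_\ucusp$ from Lemma~\ref{lem: flick}\eqref{part: sym}, use Lemma~\ref{lem: cusp L} to get holomorphy of the $L$-factor at $s=1/2$, invoke Lemma~\ref{lem: L prop}\eqref{part: gamhalf} to equate $\gamma$ and $\epsilon$ at $1/2$, and conclude with Lemma~\ref{lem: Ok}. The middle of your write-up is somewhat more verbose than necessary (e.g.\ the alternate argument avoiding full unitarity), but the final summary paragraph matches the paper's proof line for line.
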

\begin{proof}
It follows from Lemma \ref{lem: flick} \eqref{part: sym} that $\rho_i^\gal\simeq\rho_i^\vee$ and in particular that $\rho_i\in\Irr_{\ucusp}$, $i=1,2$. 
It therefore follows from Lemma \ref{lem: cusp L} that $L(s,\rho_1,\rho_2)$ is holomorphic at $s=1/2$ and therefore from  Lemma  \ref{lem: L prop} \eqref{part: gamhalf} that
$\epsilon(1/2,,\rho_1,\rho_2,\psi)=\gamma(1/2,\rho_1,\rho_2,\psi)$.
The corollary now follows from Lemma \ref{lem: Ok}.
\end{proof}
\begin{lemma}\label{lem: conj dual}
Let $\pi$ be either in $\Irr$ or of Whittaker type and satisfy $\pi^\gal\simeq\pi^\inv$ and let $\delta\in\Irr_\sqr$ be such that $e(\delta)\ge 0$. Then 
\[
\epsilon(1/2,\pi,\delta^\gal\times \delta^\vee,\psi)=1.
\]
\end{lemma}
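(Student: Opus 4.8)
The plan is to exploit the multiplicativity of the $\epsilon$-factor together with the functional equation (item (3) in the list of basic properties) and the hypothesis $\pi^\gal \simeq \pi^\inv$. First I would write
\[
\epsilon(s,\pi,\delta^\gal\times\delta^\vee,\psi)=\epsilon(s,\pi,\delta^\gal,\psi)\,\epsilon(s,\pi,\delta^\vee,\psi),
\]
using Lemma~\ref{lem: mult}\eqref{part: gamma} applied to the $\epsilon$-factor (obtained from the multiplicativity of $\gamma$ and of $L$ via Lemma~\ref{lem: mult}\eqref{part: Leps}, keeping $\delta^\gal\times\delta^\vee$ in a standard form realization, which is legitimate since $e(\delta^\gal)=-e(\delta)\le 0\le e(\delta)=e(\delta^\vee)$, so we may need to reorder but the $\epsilon$-factor is symmetric in the two slots anyway). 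The key point is then to relate the two factors $\epsilon(s,\pi,\delta^\gal,\psi)$ and $\epsilon(s,\pi,\delta^\vee,\psi)$ at $s=1/2$.

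The second step is the substitution. Applying \eqref{eq: Leps} (the Galois-invariance of the $\epsilon$-factor) with $\psi^\gal=\psi^{-1}$, together with $\pi^\gal\simeq\pi^\inv$ and $(\delta^\gal)^\gal\simeq\delta$, gives
\[
\epsilon(s,\pi,\delta^\gal,\psi)=\epsilon(s,\pi^\gal,(\delta^\gal)^\gal,\psi^\gal)=\epsilon(s,\pi^\inv,\delta,\psi^{-1}).
\]
Now invoke item (3), the $\epsilon$-functional equation $\epsilon(s,\pi^\inv,\delta,\psi^{-1})=\epsilon(1-s,\pi,\delta^\inv,\psi)^{-1}$ — here I need to be a little careful about whether the functional equation is stated as $\epsilon(s,\pi,\tau,\psi)\epsilon(1-s,\pi^\inv,\tau^\inv,\psi^{-1})=1$, which it is. Since $\delta^\inv\simeq\delta^\vee$ for $\delta\in\Irr$, this yields
\[
\epsilon(s,\pi,\delta^\gal,\psi)=\epsilon(1-s,\pi,\delta^\vee,\psi)^{-1}.
\]
Setting $s=1/2$ makes the right side $\epsilon(1/2,\pi,\delta^\vee,\psi)^{-1}$, and therefore
\[
\epsilon(1/2,\pi,\delta^\gal\times\delta^\vee,\psi)=\epsilon(1/2,\pi,\delta^\gal,\psi)\,\epsilon(1/2,\pi,\delta^\vee,\psi)=\epsilon(1/2,\pi,\delta^\vee,\psi)^{-1}\cdot\epsilon(1/2,\pi,\delta^\vee,\psi)=1.
\]

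I expect the main obstacle to be bookkeeping rather than a genuine difficulty: one has to make sure the factorization of $\epsilon(s,\pi,\delta^\gal\times\delta^\vee,\psi)$ is valid — in the case $\pi$ is merely of Whittaker type one should pass through the definition via $\lambda(\pi)$ and use Lemma~\ref{lem: mult}, and when $\delta^\gal\times\delta^\vee$ is reducible one must still be allowed to split its $\epsilon$-factor, which follows because $\epsilon$ of an induced-type representation factors regardless of irreducibility by \cite[Proposition 9.4]{MR701565} (Lemma~\ref{lem: mult}\eqref{part: Leps}) applied after writing things in standard form. A secondary point of care is the exact placement of the Galois twist and the inverse character in \eqref{eq: Leps}: one must track that $\psi^\gal=\psi^{-1}$ so that the two applications of Galois-invariance and of the functional equation combine with matching additive characters. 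None of this is deep, but it is the kind of step where a sign or a $\psi$ versus $\psi^{-1}$ slip would invalidate the argument, so I would write it out carefully.
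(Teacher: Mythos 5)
Your proof is correct and follows essentially the same route as the paper: factor $\epsilon$ via Lemma~\ref{lem: mult}, then use \eqref{eq: Leps} together with $\pi^\gal\simeq\pi^\inv$ and the functional equation for $\epsilon$ to show $\epsilon(1/2,\pi,\delta^\gal,\psi)=\epsilon(1/2,\pi,\delta^\vee,\psi)^{-1}$. The paper carries this out directly at $s=1/2$ rather than keeping $s$ general, but the argument is the same.
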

\begin{proof}
It follows from Lemma  \ref{lem: mult} that
\[
\epsilon(1/2,\pi,\delta^\gal\times \delta^\vee,\psi)=\epsilon(1/2,\pi,\delta^\gal,\psi)\epsilon(1/2,\pi,\delta^\vee,\psi).
\]
Applying \eqref{eq: Leps} we have
\[
\epsilon(1/2,\pi,\delta^\gal,\psi)=\epsilon(1/2,\pi^\inv,\delta,\psi^{-1})=\epsilon(1/2,\pi,\delta^\vee,\psi)^{-1}
\]
and the lemma follows.
\end{proof}

\begin{lemma}\label{lem twosq}
Let $\delta_1,\delta_2\in\Irr_\sqr$ be such that $\delta_i^\gal\simeq\delta_i^\vee$, $i=1,2$. Then
\begin{enumerate}
\item $\gamma(1/2,\delta_1,\delta_2,\psi)=\epsilon(1/2,\delta_1,\delta_2,\psi)$;
\item If $\delta_1$ and $\delta_2$ are distinguished then $\gamma(1/2,\delta_1,\delta_2,\psi)=1$;
\end{enumerate}
\end{lemma}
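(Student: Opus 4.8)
The plan is to prove the two parts of Lemma \ref{lem twosq} together, reducing both to the cuspidal case of Ok (Lemma \ref{lem: Ok}, Corollary \ref{cor: Ok}) via the multiplicativity of gamma factors in the cuspidal support (Lemma \ref{lem: cuspsupp}) and the structure of distinguished square-integrable representations (Proposition \ref{prop: dist}). First, part (1): since $\delta_i^\gal \simeq \delta_i^\vee$, both $\delta_1,\delta_2$ are unitary, so $\delta_i \in \Irr_\usqr$; by Lemma \ref{lem: sqr L} the $L$-factors $L(s,\delta_1,\delta_2)$ and $L(s,\delta_1^\vee,\delta_2^\vee)$ are holomorphic for $\Re(s)>0$, in particular at $s=1/2$. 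Then Lemma \ref{lem: L prop}\eqref{part: gamhalf} gives $\gamma(1/2,\delta_1,\delta_2,\psi) = \epsilon(1/2,\delta_1,\delta_2,\psi)$ directly.

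For part (2), assume moreover that $\delta_1,\delta_2$ are distinguished. Write $\delta_i = \St_{k_i}(\rho_i)$ with $\rho_i \in \Irr_\cusp$. The cuspidal support of $\delta_i$ is $\{\nu^{(k_i-1)/2}\rho_i,\nu^{(k_i-3)/2}\rho_i,\dots,\nu^{(1-k_i)/2}\rho_i\}$, so by Lemma \ref{lem: cuspsupp} (applied twice, or once after passing through Lemma \ref{lem: mult}\eqref{part: gamma}) we get
\[
\gamma(1/2,\delta_1,\delta_2,\psi) = \prod_{a,b} \gamma(1/2,\nu^{a}\rho_1,\nu^{b}\rho_2,\psi),
\]
where $a$ ranges over $\{(k_1-1)/2,\dots,(1-k_1)/2\}$ and $b$ over $\{(k_2-1)/2,\dots,(1-k_2)/2\}$. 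Using property (2) of the basic $L$/$\epsilon$ list, $\gamma(1/2,\nu^a\rho_1,\nu^b\rho_2,\psi) = \gamma(1/2+a+b,\rho_1,\rho_2,\psi)$. The key point is then to pair up these factors using property (3) of the same list together with $\rho_i^\gal\simeq\rho_i^\vee$ (from Lemma \ref{lem: flick}\eqref{part: sym}, which applies since distinguished $\rho_i$ are unitary): the relation $\gamma(s,\rho_1,\rho_2,\psi)\gamma(1-s,\rho_1^\gal,\rho_2^\gal,\psi^{-1})=1$ combined with \eqref{eq: Leps} yields $\gamma(s,\rho_1,\rho_2,\psi)\gamma(1-s,\rho_1,\rho_2,\psi)=1$, i.e. the functional equation symmetry around $s=1/2$. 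The exponents $1/2+a+b$ occurring in the product are symmetric about $1/2$ (since the multisets $\{a\}$ and $\{b\}$ are each symmetric about $0$), so the factors cancel in pairs $s \leftrightarrow 1-s$, leaving only the "middle" factors with $a+b=0$. These middle factors are exactly $\gamma(1/2,\rho_1,\rho_2,\psi)$, appearing with multiplicity $\min(k_1,k_2)$.

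It remains to show the surviving factor $\gamma(1/2,\rho_1,\rho_2,\psi)$ equals $1$, or rather that it does so to an even power unless it is already trivial. Here I distinguish two cases using Proposition \ref{prop: dist}(2): if both $\rho_1,\rho_2$ are distinguished (which forces $k_1,k_2$ odd), then $\min(k_1,k_2)$ is odd and Lemma \ref{lem: Ok} gives $\gamma(1/2,\rho_1,\rho_2,\psi)=1$ outright, so the product is $1$. If instead one or both of $\rho_1,\rho_2$ is merely $\eta_{E/F}$-distinguished (with the corresponding $k_i$ even), then $\min(k_1,k_2)$ is even, and so the surviving factor $\gamma(1/2,\rho_1,\rho_2,\psi)^{\min(k_1,k_2)}$ is a square; combined with part (1) and Lemma \ref{lem: L prop}\eqref{part: epsqr} (noting $\gamma(1/2,\delta_1,\delta_2,\psi)^2=\epsilon(1/2,\delta_1,\delta_2,\psi)^2=1$), this square equals $1$. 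The main obstacle — and the step deserving the most care — is the bookkeeping of which exponents $1/2+a+b$ are symmetric partners and verifying that the leftover multiplicity is precisely $\min(k_1,k_2)$ with the stated parity; once that combinatorics is pinned down, the parity argument together with Ok's theorem closes both cases. An alternative, cleaner route for part (2) would be to invoke Proposition \ref{prop: dist std} to write $\delta_1$ itself is already square-integrable so its standard module is itself, and instead reduce via Lemma \ref{lem: conj dual}-type cancellations, but the cuspidal-support computation above seems the most direct.
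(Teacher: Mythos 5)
Your approach is the same as the paper's: expand $\gamma(1/2,\delta_1,\delta_2,\psi)$ as a product of cuspidal $\gamma$-factors via Lemma~\ref{lem: cuspsupp}, cancel factors in pairs using the functional equation $\gamma(s,\rho_1,\rho_2,\psi)\gamma(1-s,\rho_1,\rho_2,\psi)=1$, and evaluate the uncancelled ``middle'' factors via Lemma~\ref{lem: Ok}. Part (1) is identical to the paper. The difference in part (2) is only bookkeeping: the paper pairs $(i_1,i_2)$ with $(k_1+1-i_1,k_2+1-i_2)$ so that at most one term is fixed, while you group by the value of $a+b$ so that $\min(k_1,k_2)$ terms sit at the center.

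There is, however, a genuine gap in your case analysis. When exactly one of $\rho_1,\rho_2$ is $\eta_{E/F}$-distinguished, so that $k_1$ and $k_2$ have opposite parities, you claim $\min(k_1,k_2)$ is even; this is false (take $k_1=3$, $k_2=4$: $\min=3$). In fact, in the mixed-parity case your count of middle factors is also wrong: the sets of shifts $\{a\}$ and $\{b\}$ consist of integers for odd $k_i$ and of proper half-integers for even $k_i$, so $a+b\in\tfrac12+\Z$ and there are \emph{zero} factors with $a+b=0$, not $\min(k_1,k_2)$. Once you observe this, the mixed-parity case is trivial and the rest of your argument survives (in the same-parity case your counting is correct, and the parity of $\min$ matches the common parity of $k_1,k_2$). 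There is also a smaller slip at the end: to conclude $\gamma(1/2,\rho_1,\rho_2,\psi)^{\text{even}}=1$ you should invoke Lemma~\ref{lem: L prop}\eqref{part: gamsqr} for the \emph{cuspidal} pair $\rho_1,\rho_2$ (which are conjugate self-dual by Lemma~\ref{lem: flick}\eqref{part: sym}), giving $\gamma(1/2,\rho_1,\rho_2,\psi)=\pm 1$; citing the identity $\gamma(1/2,\delta_1,\delta_2,\psi)^2=1$ for the $\delta$'s is circular, as that is exactly what is being computed. It is worth noting that the paper's own proof has a similar soft spot: it asserts that $k_1\equiv k_2\bmod 2$ follows from Proposition~\ref{prop: dist}, which is not true in general, but its pairing argument is unaffected because in the mixed-parity case one has $\rho_2\not\simeq\rho_1^\vee$ so the cancellation in \eqref{eq: pair one} is unconditional and there is no fixed point.
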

\begin{proof}
Note that by the given symmetry $\delta_1,\,\delta_2\in\Irr_\usqr$. The first equality therefore follows from Lemma \ref{lem: L prop} \eqref{part: gamhalf} and Lemma \ref{lem: sqr L}.
Write $\delta_i=\St_{k_i}(\rho_i)$, $i=1,2$, where $\rho_i\in\Irr_\ucusp$ and $k_i\in\N$. It follows from Lemma \ref{lem: cuspsupp} that
\begin{equation}\label{eq: exp gam}
\gamma(s,\delta_1,\delta_2,\psi)=\prod_{i_1=1}^{k_1}\prod_{i_2=1}^{k_2} \gamma( s+\frac{k_1+k_2}2+1-i_1-i_2,\rho_1,\rho_2,\psi).
\end{equation}
By Lemma \ref{lem: cusp L} for $u\in \R$ the function $\gamma( s,\rho_1,\rho_2,\psi)$ is holomorphic and non-zero at $s=u$ unless $\rho_2\simeq\rho_1^\vee$ and $u\in \{0,1\}$.
By Lemma \ref{lem: L prop} \eqref{part: gamsqr} we therefore have that
\begin{multline}\label{eq: pair one}
\gamma( 1/2+\frac{k_1+k_2}2+1-i_1-i_2,\rho_1,\rho_2,\psi) \times \\ \gamma( 1/2+\frac{k_1+k_2}2+1-(k_1+1-i_1)-(k_2+1-i_2),\rho_1,\rho_2,\psi)=1
\end{multline}
whenever either $\rho_2\not\simeq \rho_1^\vee$ or $(k_1+k_2+3)/2-i_1-i_2\not\in\{0,1\}$.

If $\delta_1$ and $\delta_2$ are distinguished then it follows from Proposition \ref{prop: dist} that $k_1\equiv k_2\mod 2$ and that $\rho_i$ is distinguished if and only if $k_i\equiv 1\mod 2$, $i=1,2$. Therefore, all the terms on the right hand side of \eqref{eq: exp gam} cancel out in pairs except for the term $\gamma(1/2,\rho_1,\rho_2,\psi)$ that occurs only if $k_1\equiv k_2\equiv 1\mod 2$. In addition, if this is the case then $\gamma(1/2,\rho_1,\rho_2,\psi)=1$ by  Lemma \ref{lem: Ok}. The lemma follows.
\end{proof}

We are now ready to prove \cite[Conjecture 5.1]{MR2448081}.
\begin{theorem}\label{thm: eps triv}
Let $\pi,\,\tau\in \Irr$ both be distinguished. Then 
\[
\epsilon(\frac12,\pi,\tau,\psi)=1.
\]
\end{theorem}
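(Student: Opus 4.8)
The plan is to reduce the statement for arbitrary irreducible distinguished $\pi,\tau$ to the case of distinguished essentially square integrable representations, using the multiplicativity of the $\epsilon$-factor (Lemma~\ref{lem: mult}\eqref{part: Leps}) together with the classification of distinguished standard modules in Proposition~\ref{prop: dist std}. Since $\epsilon(s,\pi,\tau,\psi)$ is defined via the standard modules $\lambda(\pi)$ and $\lambda(\tau)$, and both are distinguished (because $\pi,\tau$ are), I may replace $\pi$ and $\tau$ by their standard modules. By Proposition~\ref{prop: dist std}, write
\[
\lambda(\pi)\simeq \delta_1^\gal\times\cdots\times\delta_t^\gal\times\tau_1\times\cdots\times\tau_s\times\delta_t^\vee\times\cdots\times\delta_1^\vee
\]
with the $\tau_j\in\Irr_\sqr$ distinguished, $e(\delta_i)\ge 0$, and similarly
\[
\lambda(\tau)\simeq (\delta_1')^\gal\times\cdots\times(\delta_{t'}')^\gal\times\tau_1'\times\cdots\times\tau_{s'}'\times(\delta_{t'}')^\vee\times\cdots\times(\delta_1')^\vee
\]
with $\tau_j'$ distinguished and $e(\delta_i')\ge 0$.

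Now expand $\epsilon(1/2,\lambda(\pi),\lambda(\tau),\psi)$ as a product over all pairs of factors via Lemma~\ref{lem: mult}\eqref{part: Leps}. The pairs split into four types: (i) a distinguished $\tau_j$ against a distinguished $\tau_k'$; (ii) a distinguished $\tau_j$ against a pair $(\delta_i')^\gal,(\delta_i')^\vee$; (iii) symmetrically, a pair $\delta_i^\gal,\delta_i^\vee$ against a distinguished $\tau_k'$; (iv) a pair $\delta_i^\gal,\delta_i^\vee$ against a pair $(\delta_{i'}')^\gal,(\delta_{i'}')^\vee$. For type (i), each such $\tau_j,\tau_k'$ are essentially square integrable with $\tau_j^\gal\simeq\tau_j^\vee$ (being distinguished, by Lemma~\ref{lem: flick}\eqref{part: sym}, and in fact unitary), so Lemma~\ref{lem twosq} gives $\epsilon(1/2,\tau_j,\tau_k',\psi)=\gamma(1/2,\tau_j,\tau_k',\psi)=1$. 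For type (ii), the representation $\tau_j$ is of Whittaker type with $\tau_j^\gal\simeq\tau_j^\inv$, so Lemma~\ref{lem: conj dual} applies with $\delta=\delta_i'$ and gives $\epsilon(1/2,\tau_j,(\delta_i')^\gal\times(\delta_i')^\vee,\psi)=1$; type (iii) is identical with the roles reversed, using symmetry of $\epsilon$ in its two arguments. For type (iv), set $\pi_0=\delta_i^\gal\times\delta_i^\vee$; this satisfies $\pi_0^\gal\simeq\pi_0^\inv$ (since $(\delta_i^\gal\times\delta_i^\vee)^\gal=\delta_i\times(\delta_i^\vee)^\gal$ and $(\delta_i^\gal\times\delta_i^\vee)^\inv=\delta_i\times(\delta_i^\gal)^\vee=\delta_i\times(\delta_i^\vee)^\gal$), so again Lemma~\ref{lem: conj dual} applies with $\pi=\pi_0$ and $\delta=\delta_{i'}'$, yielding $1$.

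Multiplying all these contributions gives $\epsilon(1/2,\lambda(\pi),\lambda(\tau),\psi)=1$, which is the desired identity. The routine points to check carefully are: that $\epsilon$ genuinely factors over \emph{all} ordered pairs of square integrable constituents of the two standard forms (this is exactly Lemma~\ref{lem: mult}\eqref{part: Leps}, provided one arranges the displayed realizations into standard form, which is legitimate by independence of the standard module on the ordering); that the various representations appearing as first arguments of Lemma~\ref{lem: conj dual} really do satisfy the hypothesis $\pi^\gal\simeq\pi^\inv$; and that each distinguished $\tau_j$, being essentially square integrable and isomorphic to the Galois twist of its dual, is automatically unitary so that Lemma~\ref{lem twosq} applies. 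I expect the main (though modest) obstacle to be bookkeeping: organizing the four-type case analysis cleanly and making sure that every square integrable factor of $\lambda(\pi)$ is paired with every square integrable factor of $\lambda(\tau)$ exactly once, with the $\gal$-dual and $\vee$-dual partners correctly matched so that Lemmas~\ref{lem: conj dual} and~\ref{lem twosq} cover all cases.
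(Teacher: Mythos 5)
Your proof is correct and follows essentially the same route as the paper's: reduce to the standard modules, invoke Proposition~\ref{prop: dist std} and multiplicativity (Lemma~\ref{lem: mult}), and dispatch the resulting pairings by Lemma~\ref{lem twosq} (distinguished square-integrable against distinguished square-integrable) and Lemma~\ref{lem: conj dual} (everything involving a $\delta^\gal\times\delta^\vee$ block). The four-type case split you write out is just a slightly more granular bookkeeping of the paper's three bullet points, with your types (ii) and (iii) merged in the paper by the symmetry of $\epsilon$.
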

\begin{proof}
If $\pi\in \Irr$ is distinguished, then $\lambda(\pi)$ is also distinguished and by definition $\epsilon(\frac12,\pi,\tau,\psi)=\epsilon(\frac12,\lambda(\pi),\lambda(\tau),\psi)$. By Lemma \ref{lem: mult} and Proposition \ref{prop: dist std} it is enough to show that:
\begin{itemize}
\item $\epsilon(1/2,\delta_1,\delta_2,\psi)=1$ for every distinguished $\delta_1,\,\delta_2\in \Irr_\sqr$;
\item $\epsilon(1/2,\delta_1,\delta_2^\gal \times\delta_2^\vee,\psi)=1$ for every $\delta_1,\,\delta_2\in \Irr_\sqr$ with $\delta_1$ distinguished and $e(\delta_2)\ge 0$;
\item $\epsilon(1/2,\delta_1^\gal\times\delta_1^\vee,\delta_2^\gal \times\delta_2^\vee,\psi)=1$ for every $\delta_1,\,\delta_2\in \Irr_\sqr$ with $e(\delta_i)\ge 0$, $i=1,2$.
\end{itemize}
The first equality follows from Lemma \ref{lem twosq}. The last two equalities follow from Lemma \ref{lem: conj dual}.
\end{proof}

Let 
\[
\Irr_{<\frac12}=\{\delta_1\times\cdots\times\delta_k:\delta_i\in\Irr_\sqr,\ \abs{e(\delta_i)}<1/2,i=1,\dots,k\}.
\]
Then $\Irr_\ugen \subseteq \Irr_{<\frac12}\subseteq \Irr_\gen$.
\begin{theorem}\label{thm: gamtriv}
Let $\pi,\,\tau\in \Irr$ both be distinguished. If $\pi\in\Irr_{<\frac12}$ and $\tau\in \Irr_\temp$, then 
\[
\gamma(1/2,\pi,\tau,\psi)=1.
\]
\end{theorem}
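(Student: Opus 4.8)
The plan is to reduce the statement about $\gamma(1/2,\pi,\tau,\psi)$ for distinguished $\pi\in\Irr_{<\frac12}$ and distinguished $\tau\in\Irr_\temp$ to the already-established triviality of the epsilon factor (Theorem \ref{thm: eps triv}) plus a holomorphy statement for the relevant $L$-factors at $s=1/2$. Using \eqref{eq: gamma} we have
\[
\gamma(1/2,\pi,\tau,\psi)=\epsilon(1/2,\pi,\tau,\psi)\frac{L(1/2,\pi^\vee,\tau^\vee)}{L(1/2,\pi,\tau)},
\]
and Theorem \ref{thm: eps triv} gives $\epsilon(1/2,\pi,\tau,\psi)=1$. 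By Lemma \ref{lem: flick} \eqref{part: sym} both $\pi$ and $\tau$ satisfy $\pi^\gal\simeq\pi^\vee$, $\tau^\gal\simeq\tau^\vee$, so by Lemma \ref{lem: L prop} \eqref{part: vee} we have $L(1/2,\pi^\vee,\tau^\vee)=L(1/2,\pi,\tau)$. Hence it suffices to show that $L(s,\pi,\tau)$ is holomorphic and non-zero at $s=1/2$; equivalently, by Lemma \ref{lem: L prop} \eqref{part: gamhalf}, that $L(s,\pi,\tau)$ is holomorphic at $1/2$ (non-vanishing of $L$-factors at real points being automatic from the product formula and the Rankin-Selberg theory, or one can avoid it by working with the reciprocal as well).

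First I would pass to standard modules: $\pi\in\Irr_{<\frac12}$ is already a generic standard module $\delta_1\times\cdots\times\delta_k$ with $\abs{e(\delta_i)}<1/2$, and $\tau\in\Irr_\temp$ is a standard module whose factors $\tau_j\in\Irr_\usqr$ have $e(\tau_j)=0$. By Lemma \ref{lem: mult} \eqref{part: Leps},
\[
L(s,\pi,\tau)=\prod_{i=1}^k\prod_j L(s,\delta_i,\tau_j),
\]
so it is enough to prove that each $L(s,\delta_i,\tau_j)$ is holomorphic at $s=1/2$. Now apply Lemma \ref{lem: sqr L} to the pair $(\delta_i,\tau_j^\vee)$ — noting $\tau_j^\gal\simeq\tau_j^\vee$ so $\tau_j^\vee\in\Irr_\usqr$ too: $L(s,\delta_i,\tau_j^\vee)=L(s,\delta_i,\tau_j)$ (by Galois invariance of $L$, since $\tau_j^\gal\simeq\tau_j^\vee$) has a pole at $s=u$ only if $\delta_i\prec\nu^{1-u}\tau_j$. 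Writing $\delta_i=\St_{a}(\rho)$ and $\tau_j=\St_{b}(\sigma)$ with $\sigma\in\Irr_\ucusp$, the precedence condition at $u=1/2$ forces $\nu^{1/2}\sigma$ to equal $\nu^{(a+b)/2+1-i'}\rho$ for some admissible $i'$, which pins down $\rho$ to be $\nu^c\sigma$ with $c$ a specific real number, and then $e(\delta_i)=\Re(c)+$ (integer correction) works out to have absolute value $\ge 1/2$, contradicting $\abs{e(\delta_i)}<1/2$. So no pole occurs.

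The main obstacle is the bookkeeping in that last step: one must carefully compute the exponent $e(\delta_i)$ in terms of the shift $c$ relating $\rho$ and $\sigma$, using that $e(\St_a(\nu^c\sigma))=\Re(c)$ when $\sigma$ is unitary cuspidal, and check that the precedence relation $\St_a(\nu^c\sigma)\prec\nu^{1/2}\St_b(\sigma)$ forces $\abs{\Re(c)}\ge 1/2$. Concretely, $\delta_1\prec\nu^{1-u}\delta_2$ with $u=1/2$ means $\nu^{1/2}\sigma\simeq\nu^{(a+b)/2+1-i'}(\nu^c\sigma)$ for some $i'\in\{1,\dots,\min(a,b)\}$, i.e. $c=1/2-(a+b)/2-1+i'=i'-1-(a+b-1)/2$; since $i'$ ranges in $\{1,\dots,\min(a,b)\}$ one checks $\abs{c}\ge 1/2$ in all cases (the extreme values of $i'$ give $\abs{c}=(a+b-1)/2\ge 1/2$ and $\abs{c}=\abs{\min(a,b)-(a+b+1)/2}=(\abs{a-b}+1)/2\ge 1/2$, and intermediate values only increase one side), so $\abs{e(\delta_i)}=\abs{c}\ge 1/2$, a contradiction. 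Thus $L(s,\pi,\tau)$ is holomorphic at $s=1/2$, and combining with $\epsilon(1/2,\pi,\tau,\psi)=1$ and Lemma \ref{lem: L prop} \eqref{part: vee} yields $\gamma(1/2,\pi,\tau,\psi)=1$.
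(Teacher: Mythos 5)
Your proof is correct and follows the same route as the paper: apply \eqref{eq: gamma} together with Lemma \ref{lem: L prop} \eqref{part: vee} to reduce to $\epsilon(1/2,\pi,\tau,\psi)=1$ (Theorem \ref{thm: eps triv}) plus holomorphy of $L(s,\pi,\tau)$ at $s=1/2$, the latter from Lemmas \ref{lem: mult} and \ref{lem: sqr L}. One small caveat: your parenthetical assertion that each individual factor $\tau_j$ of $\tau$ satisfies $\tau_j^\gal\simeq\tau_j^\vee$ is unjustified (Lemma \ref{lem: flick} only gives this for $\tau$ itself, and the $\tau_j$ can be permuted nontrivially), but it is also unneeded: $\tau_j^\vee\in\Irr_\usqr$ automatically, and applying Lemma \ref{lem: sqr L} to $L(s,\delta_i,(\tau_j^\vee)^\vee)$ yields the pole condition $\delta_i\prec\nu^{1-u}\tau_j^\vee$, which your $e$-bookkeeping rules out at $u=1/2$ exactly as written.
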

\begin{proof}
If $\pi\in\Irr_{<\frac12}$ and $\tau\in \Irr_\temp$ then it follows from Lemmas \ref{lem: sqr L} and \ref{lem: mult} that $L(s,\pi,\tau)$ is holomorphic at $s=1/2$, from Lemmas \ref{lem: flick} \eqref{part: sym} and \ref{lem: L prop} \eqref{part: gamhalf} that $\gamma(1/2,\pi,\tau,\psi)=\epsilon(1/2,\pi,\tau,\psi)$ and from Theorem \ref{thm: eps triv} that $\epsilon(1/2,\pi,\tau,\psi)=1$. The theorem follows.


\end{proof}

\begin{example}\label{example}
Let $\rho_0\in\Irr_\ucusp$ be such that $\rho_0^\gal\not\simeq\rho_0^\vee$ and set $\rho=\nu^\alpha\rho_0$ for some $\alpha\in\R$. Let $\pi=\nu^{-1/2}\rho^\vee \times \nu^{1/2}\rho^\gal$ and $\tau=\rho\times (\rho^\gal)^\vee$. Then $\pi,\,\tau\in \Irr_\gen$ are both distinguished. It follows as in the above proof that $\epsilon(1/2,\pi,\tau,\psi)=1$,
\[
\gamma(1/2,\pi,\tau,\psi)=\lim_{s\to 1/2}\frac{L(1-s,\pi,\tau)}{L(s,\pi,\tau)}
\]
 and
 \[
 L(s,\pi,\tau)=L(s-1/2,\rho^\vee,\rho)L(s-1/2,\rho^\vee,(\rho^\gal)^\vee)L(s+1/2,\rho^\gal,\rho)L(s+1/2,\rho^\gal,(\rho^\gal)^\vee).
 \]
By Lemma \ref{lem: sqr L} we have that $L(s-1/2,\rho^\vee,\rho)$ has a simple pole at $s=1/2$ while $L(s-1/2,\rho^\vee,(\rho^\gal)^\vee)L(s+1/2,\rho^\gal,\rho)L(s+1/2,\rho^\gal,(\rho^\gal)^\vee)=L(s+1/2,\rho^\gal,(\rho^\gal)^\vee)$ is holomorphic at $s=1/2$ and therefore  
\[
\lim_{s\to 1/2}\frac{L(1-s,\pi,\tau)}{L(s,\pi,\tau)}=-1.
\]
This gives examples of distinguished $\pi,\,\tau\in\Irr$ such that $\gamma(1/2,\pi,\tau,\psi)=-1$. Note that if $\alpha=0$ then $\pi\in \Irr_\gen$ and $\tau\in \Irr_\temp$ while if $-1/2<\alpha<0$ then $\pi,\,\tau\in\Irr_{<\frac12}$.

For an example where $\pi,\,\tau\in\Irr_\ugen$, let $\rho\in \Irr_\cusp$ be such that $\rho^\gal\simeq\rho^\vee$ and let $0<\alpha,\beta<1/2$ be such that $\alpha+\beta=1/2$. Set $\pi=\nu^\alpha\rho\times\nu^{-\alpha}\rho$ and $\tau=\nu^\beta \rho^\vee\times \nu^{-\beta}\rho^\vee$ and note that $\pi,\,\tau\in\Irr_\ugen$ are both distinguished. Also
\[
L(s,\pi,\tau)=L(s+\alpha+\beta,\rho,\rho^\vee)L(s+\alpha-\beta,\rho,\rho^\vee)L(s-\alpha+\beta,\rho,\rho^\vee)L(s-\alpha-\beta,\rho,\rho^\vee).
\]
The first three terms on the right hand side are holomorphic at $s=1/2$ while the last has a simple pole at $s=1/2$. Therefore,
\[
\gamma(1/2,\pi,\tau,\psi)=\lim_{s\to 1/2}\frac{L(1-s,\pi,\tau)}{L(s,\pi,\tau)}=-1.
\]
In fact, one can even take $\tau\in \Irr_\usqr$, for example if $\rho$ is cuspidal and distinguished, choose $\tau=St_3(\rho)$ and 
$\pi=\nu^{-3/2}\rho\times \nu^{3/2}\rho$.
 \end{example}

\section{A local relative converse theorem for central characters}\label{sec: central}

The purpose of this section is to show that if the gamma factor at $1/2$ is trivial for twists by all distinguished characters of $E^\times$ then the central character is distinguished.
We begin with some simple observations about additive and multiplicative characters.

\begin{lemma}\label{inclusions}
For every integer $m$, one has 
\begin{itemize}
\item $F+\p_E^{m} \subsetneq F+\p_E^{m-1}$ if $E$ is unramified over $F$, whereas
\item $F+\p_E^{2m+1}=F+\p_E^{2m} \subsetneq F+\p_E^{2m-1}$ if $E$ is ramified over $F$.
\end{itemize}
\end{lemma}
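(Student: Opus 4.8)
The plan is to analyze the $\oo_F$-module (in fact group) structure of $F + \p_E^m$ inside $E$ by choosing an appropriate $\oo_F$-basis of $\oo_E$ adapted to whether $E/F$ is ramified or unramified, and then to compute the index (or rather, decide when two consecutive such subgroups coincide) essentially by a volume/index count. Concretely, fix a uniformizer $\varpi_F$ of $F$ and recall that $F + \p_E^m = F + \p_E^{m'}$ whenever $m \equiv m' \pmod{2}$ is possible only in the ramified case, so the two bullets are really two separate computations.

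First I would treat the unramified case. Here $\varpi_F$ is also a uniformizer of $E$, so $\p_E^m = \varpi_F^m \oo_E$, and $\oo_E = \oo_F \oplus \oo_F \zeta$ for a suitable $\zeta$ (a root of unity generating the residue extension, say). Then $F + \p_E^m = F \oplus \varpi_F^m \oo_F \zeta$ as $\oo_F$-modules (more precisely, modulo $F$ one is looking at $\varpi_F^m \oo_F \zeta$ inside $E/F \cong E/F$), so the quotient $(F+\p_E^{m-1})/(F+\p_E^m) \cong \varpi_F^{m-1}\oo_F / \varpi_F^m \oo_F \cong \oo_F/\p_F$, which is nonzero. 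Hence the inclusion is strict. The cleanest way to phrase this without quotient-group bookkeeping is to exhibit an explicit element: $\varpi_F^{m-1}\zeta \in F + \p_E^{m-1}$, and to check $\varpi_F^{m-1}\zeta \notin F + \p_E^m$, because if $\varpi_F^{m-1}\zeta = a + x$ with $a \in F$, $x \in \p_E^m$, then reducing in $E/\p_E^m$ and using that $\{1,\zeta\}$ remains $\oo_F/\p_F^m$-independent modulo $\p_E^m$ forces $\varpi_F^{m-1} \in \p_F^m$, a contradiction.

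Next the ramified case. Now there is a uniformizer $\varpi_E$ with $\varpi_E^2 = u\varpi_F$ for a unit $u$, and $\oo_E = \oo_F \oplus \oo_F \varpi_E$; every element of $\p_E^k$ is $\varpi_F^{\lceil k/2\rceil}\oo_F + \varpi_F^{\lceil (k-1)/2 \rceil}\oo_F\varpi_E$ after sorting even and odd powers. Writing $z \in E$ as $a + b\varpi_E$ with $a,b \in F$, membership $z \in F + \p_E^k$ is equivalent to $b \in \varpi_F^{\lceil (k-1)/2\rceil}\oo_F$, i.e.\ $v_F(b) \ge \lceil (k-1)/2 \rceil$ (with no condition on $a$, since the $F$-summand absorbs it and also absorbs the $\oo_F$-part of $\p_E^k$ when $k$ is even, $v_F(a') \ge k/2$, which is already implied). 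Since $\lceil (2m+1-1)/2\rceil = m = \lceil (2m-1)/2\rceil$ — wait, $\lceil (2m-1)/2\rceil = m$ as well — one gets $F+\p_E^{2m+1} = F + \p_E^{2m}$: both amount to $v_F(b) \ge m$. Meanwhile $\lceil (2m-1-1)/2\rceil = m-1$, so $F + \p_E^{2m-1}$ is the strictly larger condition $v_F(b)\ge m-1$, witnessed by the element $\varpi_F^{m-1}\varpi_E$, which lies in $F+\p_E^{2m-1}$ but not in $F+\p_E^{2m}$ since its $\varpi_E$-coordinate $\varpi_F^{m-1}$ has valuation $m-1 < m$.

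I do not anticipate a genuine obstacle here — the lemma is elementary — but the one point requiring care is getting the ceiling-function bookkeeping exactly right, in particular verifying that in the ramified case the $\oo_F \cdot 1$ part of $\p_E^{2m}$ imposes no extra constraint beyond what $F$ already gives, so that membership in $F + \p_E^k$ depends only on the $\varpi_E$-coordinate. A secondary subtlety is making sure the chosen decomposition $\oo_E = \oo_F \oplus \oo_F\varpi_E$ (resp.\ $\oo_F \oplus \oo_F\zeta$) is valid, which is standard for $p$-adic fields (residue characteristic may be $2$ in the ramified case, but the statement as given does not depend on tameness since we never need $\varpi_E \notin F$ to come from an Eisenstein polynomial of a specific shape, only $v_E(\varpi_E) = 1$ and $e(E/F) = 2$). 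I would present the unified computation via the valuation of the $\varpi_E$-coordinate and then simply read off both bullets.
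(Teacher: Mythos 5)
Your proof is correct, and it takes a genuinely different route from the paper's. The paper first observes that multiplying by $\varpi_F^k$ translates $F+\p_E^m$ to $F+\p_E^{m+ek}$ (the paper writes $f$ but means the ramification index $e$), so it suffices to settle the statement for a single $m$; it then shows that the equality $F+\p_E=F+\oo_E$ is equivalent to $\oo_E^\times=\oo_F^\times(1+\p_E)$, i.e.\ to $E/F$ being ramified, and handles the remaining strict inclusion in the ramified case by noting that $\varpi_E^{-1}=x+y$ with $x\in F$, $y\in\oo_E$ would force $x^{-1}$ to be a uniformizer of $E$ lying in $F$. You instead fix an $\oo_F$-basis of $\oo_E$ adapted to the extension type ($\{1,\zeta\}$ unramified, $\{1,\varpi_E\}$ ramified), observe that membership of $a+b\cdot(\text{second basis vector})$ in $F+\p_E^k$ is governed entirely by the $F$-valuation of the second coordinate $b$, and read off both bullet points directly, exhibiting explicit witnesses for the strict inclusions. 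Your approach is more computational but entirely self-contained and perhaps more transparent; the paper's is shorter at the cost of invoking the unit-group characterization of ramification and the translation trick. Both are valid; the only thing worth tidying in your write-up is the phrase about $E/F\cong E/F$ (presumably you meant $E/F\cong F\zeta$, resp.\ $F\varpi_E$, as $F$-modules) and the brief ``wait'' hesitation over the ceiling computation, which in any case lands on the correct value $\lceil(2m-1)/2\rceil=m$.
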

\begin{proof}
Note that $\varpi_F^k(F+\p_E^m)=F+\p_E^{m+fk}$ where $f$ is the degree of the residual field extension for $E/F$. This shows that if the statement is true for some $m_0$ then it is true for all $m$. 

If $F+\p_E=F+\oo_E$ then for $u\in \oo_E^\times$ write $u=x+y$ with $x\in F$ and $y\in\p_E$. Then, clearly $x\in \oo_F^\times$ and therefore
$u=x(1+x^{-1}y)\in\oo_F^\times(1+\p_E)$. This shows that  $\oo_E^\times=\oo_F^\times(1+\p_E)$, i.e., that $E/F$ is ramified. The case where $E/F$ is unramified follows. 

Assume now that $E/F$ is ramified. Then $\oo_E^\times=\oo_F^\times(1+\p_E)\subseteq F+\p_E$ and therefore $\oo_E=\p_E\cup \oo_E^\times \subseteq F+\p_E$. It follows that $F+\oo_E=F+\p_E$. Assume that $\varpi_E^{-1}=x+y$ with $x\in F$ and $y\in\oo_E$.  Then, $x^{-1}=(\varpi_E^{-1}-y)^{-1}$ is a uniformizer of $E$ that lies in $F$ contradicting the assumption that the extension ramifies. It follows that $F+\oo_E  \subsetneq F+\p_E^{-1}$ and the lemma follows.
\end{proof}

Recall that the conductor of a non-trivial character $\xi$ of $E$ is the minimal integer $m$ such that $\xi$ is trivial on $\p_E^m$. For $a\in E$ let $\psi_a(x)=\psi(ax)$, $x\in E$, so that $\{\psi_a:a\in E\}$ is the group of all characters of $E$. 
\begin{lemma}\label{additive-conductor} for every integer $m$,
\begin{itemize}
\item if $E/F$ is unramified then there exists a character of $E$ trivial on $F$ of conductor $m$; 
\item if $E/F$ is ramified then there exists a character of $E$ trivial on $F$ of conductor $2m$. Furthermore all non trivial characters of $E$ which are trivial on $F$ have even conductor.
\end{itemize}
\end{lemma}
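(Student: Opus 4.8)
\textbf{Proof plan for Lemma \ref{additive-conductor}.}

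The plan is to reduce everything to a statement about the additive characters $\psi_a$, $a \in E^\times$, and the filtration $\{F + \p_E^m\}$ studied in Lemma \ref{inclusions}. First I would record the elementary fact that a character $\psi_a$ of $E$ is trivial on $F$ if and only if $\mathrm{tr}_{E/F}(aF) \subseteq \ker(\psi|_F)$; since $\psi|_F = 1$ by our running assumption, this is automatic whenever $a$ lies in the ``codifferent-like'' set relative to $F$ — more precisely, since $\psi$ itself is a fixed nontrivial character trivial on $F$, the character $\psi_a$ is trivial on $F$ precisely when $aF \subseteq \ker \psi$, i.e.\ when $a$ lies in the largest $F$-submodule of $E$ on which $\psi$ vanishes. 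Calling the conductor of $\psi$ equal to $d$ (so $\psi$ is trivial on $\p_E^{d}$ but not on $\p_E^{d-1}$), the character $\psi_a$ has conductor $d - v_E(a)$, and $\psi_a|_F = 1$ iff $aF \subseteq \p_E^{d-1}+(\text{something})$; the clean way to phrase it is: $\psi_a|_F = 1 \iff F \subseteq a^{-1}\p_E^{d}\cdot(\ker\psi\text{-data})$, which after unwinding says $v_E(a)$ must be such that $a^{-1}F \subseteq \{x : \psi(x)=1\}$.

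Concretely, I would argue as follows. The set $\{x \in E : \psi(\mathrm{something})\}$ is better avoided; instead use that $\psi$ trivial on $F$ means $\psi$ factors through $E/F$ as $F$-vector spaces only in a loose sense, so the right bookkeeping device is: $\psi_a$ is trivial on $F$ $\iff$ $a \in \mathfrak{c}$ where $\mathfrak{c} = \{a \in E : \psi_a|_F = 1\}$ is a fractional $\oo_E$-ideal (it is stable under multiplication by $\oo_E$ since $aF \subseteq$ any $\oo_E$-submodule is not automatic — rather $\mathfrak{c}$ is the annihilator of $F$ in the Pontryagin dual picture, hence an $\oo_E$-module, hence a fractional ideal $\p_E^{c_0}$ for some $c_0 \in \Z$). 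Then $\psi_a$ for $a$ with $v_E(a) = k$ has conductor $d - k$, and ranges over exactly those conductors with $k \ge c_0$, i.e.\ conductors $\le d - c_0$. So the set of achievable conductors of characters of $E$ trivial on $F$ is an interval $(-\infty, d - c_0] \cap$ (some residue class); the content of the lemma is the identification of \emph{which} values actually occur, and this is governed precisely by Lemma \ref{inclusions}. Namely, $\psi_a$ is trivial on $\p_E^m$ iff $a\p_E^m \subseteq \mathfrak{c}' $ where $\mathfrak{c}'$ tracks the conductor of $\psi$; combining the ``trivial on $F$'' and ``trivial on $\p_E^m$'' conditions, $\psi_a$ is trivial on $F + \p_E^m$ iff $a$ lies in the annihilator of $F + \p_E^m$, and by Lemma \ref{inclusions} the distinct such annihilators are: all of $\p_E^{c_0 - m}$ in the unramified case (so every conductor occurs), but in the ramified case the jumps $F+\p_E^{2m+1} = F+\p_E^{2m}$ force the annihilator to be constant on those pairs, so only conductors of one fixed parity occur — and that parity is even after normalizing by the degree-$fk$ shift from Lemma \ref{inclusions}, because $\varpi_F^k(F+\p_E^m) = F+\p_E^{m+2k}$ shifts $m$ by multiples of $2$ when $E/F$ is ramified (as $f=1$, $e=2$ there).

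The main obstacle I expect is purely bookkeeping: pinning down the exact normalization so that ``even conductor'' comes out rather than ``conductor in a fixed parity class depending on $\psi$''. This requires knowing that for the \emph{given} $\psi$ (arbitrary nontrivial, trivial on $F$), its own conductor already lies in the correct class, which itself follows by applying the argument to $\psi$ in place of an arbitrary $\psi_a$ — a mild circularity to be handled by first fixing \emph{one} such $\psi$ of known conductor (its existence is the easy ``there exists'' half), then getting all others by translation $a \mapsto \psi_a$. Once that anchor is in place, the existence half is immediate from surjectivity of $a \mapsto \psi_a$ onto all characters of $E$ combined with Lemma \ref{inclusions} telling us which filtration steps are distinct, and the ``furthermore'' half is the contrapositive: a character trivial on $F$ and on $\p_E^{2m-1}$ would, by the equality $F+\p_E^{2m} = F+\p_E^{2m+1}$... wait, that goes the wrong way; rather, triviality on $F+\p_E^{2m-1} \supsetneq F+\p_E^{2m}$ is a strictly stronger condition, so the possible conductors skip the odd values — this is exactly the collapse $F+\p_E^{2m+1}=F+\p_E^{2m}$ from Lemma \ref{inclusions} read as: no character has conductor exactly $2m+1$ while being trivial on $F$, since being trivial on $\p_E^{2m+1}$ and on $F$ already forces triviality on $\p_E^{2m}$.
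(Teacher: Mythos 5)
The core ingredients you invoke — the conductor formula $c(\psi_a) = c(\psi) - v_E(a)$, and reading the collapse $F+\p_E^{2m+1}=F+\p_E^{2m}$ of Lemma~\ref{inclusions} as ruling out odd conductors for characters trivial on $F$ when $E/F$ is ramified — are correct and are exactly what the paper uses. Your final paragraph, proving the ``furthermore'' assertion, is essentially the paper's first sentence. So the skeleton is right.

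However, there is a genuine error in the middle: you assert that $\mathfrak{c}=\{a\in E:\psi_a|_F=\trivchar\}$ is ``an $\oo_E$-module, hence a fractional ideal $\p_E^{c_0}$.'' This is false. The annihilator of $F$ in the duality $E\widehat{\longrightarrow}\widehat{E}$, $a\mapsto\psi_a$, is an $\oo_F$-module, not an $\oo_E$-module (if $a\in\mathfrak{c}$ and $u\in\oo_E\setminus\oo_F$, then $uaF\not\subseteq aF$, and there is no reason $\psi$ should vanish on $uaF$). In fact $\mathfrak{c}=F$: you have $F\subseteq\mathfrak{c}$ since $\psi|_F=\trivchar$, and if there were $a\in\mathfrak{c}\setminus F$ then $Fa+F=E$ and $\psi$ would kill $E$, contradicting non-triviality; and $F$ is certainly not of the form $\p_E^{c_0}$. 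This wrong identification then produces a wrong intermediate statement you write explicitly, namely that the achievable conductors form a one-sided interval ``$(-\infty,d-c_0]\cap(\text{residue class})$.'' They do not: since $v_E(a)$ ranges over $v_E(F^\times)=\Z$ (unramified) or $2\Z$ (ramified), which is unbounded in both directions, the conductors $d-v_E(a)$ sweep out a full residue class in $\Z$, unbounded above as well as below. You also register some doubt about the $\oo_E$-module claim in a parenthetical and then override it; that hesitation was correct and should have been pursued.

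The paper avoids all of this by simply noting that the set of non-trivial characters of $E$ trivial on $F$ is $\{\psi_a:a\in F^\times\}$ (i.e.\ $\mathfrak{c}=F$), then using that $v_E(F^\times)$ equals $\Z$ if $E/F$ is unramified and $2\Z$ if ramified, together with the conductor formula. Replace your ``$\mathfrak{c}=\p_E^{c_0}$'' step with the identification $\mathfrak{c}=F$ (justified by the dimension count above, or by appeal to Pontryagin duality for $E/F$), drop the interval claim, and the rest of your argument goes through and recovers the paper's proof. Also, the opening sentence about $\operatorname{tr}_{E/F}(aF)\subseteq\ker(\psi|_F)$ is off-target: the condition is simply $\psi(aF)=1$, with no trace involved in this setup; you should delete that red herring.
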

\begin{proof}
If $E$ is ramified over $F$ then it follows from the identity $F+\p_E^{2m+1}=F+\p_E^{2m} $ of Lemma \ref{inclusions} that the conductor of any non-trivial character of $E$ that is trivial on $F$ must be even. 

Note that the set of non-trivial characters of $E$ that are trivial on $F$ is $\{\psi_a:a\in F^\times\}$. Since the conductor of $\psi_a$ is $c-v$ where $c$ is the conductor of $\psi$ and $v$ is the $E$-valuation of $a$ and since the image of $F^\times$ under the $E$-valuation is $\Z$ if $E$ is unramified over $F$ and $2\Z$ otherwise, the lemma follows.
\end{proof}

If $A$ is an abelian locally compact totally disconnected group, we denote by $\widehat{A}$ its group of smooth characters. 

Recall that the conductor of $\chi\in \widehat{E^\times}$ is zero if $\chi$ is unramified (i.e., trivial on $\oo_E^\times$) and is the minimal positive integer $m$ such that $\chi|_{1+\p_E^m}=\trivchar$ otherwise. 
For $m\geq 0$, we denote by $\widehat{(E^\times/F^\times)}(m)$ the subset of $\widehat{E^\times}$ consisting of characters of $E^\times$ trivial on $F^\times$, and of conductor $\leq m$.

Note that, in fact, $\widehat{(E^\times/F^\times)}(m)$ is the group of characters of the finite group $E^\times/F^\times(1+\p_E^m)$.
\begin{lemma}\label{multiplicative-conductor}
Let $m\ge 1$ be an integer.
\begin{itemize}
\item If $E/F$ is unramified then $\widehat{(E^\times/F^\times)}(m-1)\subsetneq
\widehat{(E^\times/F^\times)}(m)$ and the 
natural map from $\widehat{(E^\times/F^\times)}(m)$ to $\widehat{(\frac{1+\p_E}{1+\p_F+\p_E^m})}$ induced by restriction is surjective. 
\item If $E$ is ramified over $F$ then $\widehat{(E^\times/F^\times)}(2m-1)\subsetneq\widehat{(E^\times/F^\times)}(2m)=
\widehat{(E^\times/F^\times)}(2m+1)$, and the 
natural map from $\widehat{(E^\times/F^\times)}(2m)$ to $\widehat{(\frac{1+\p_E}{1+\p_F+\p_E^{2m}})}$ induced by restriction is surjective. 
\end{itemize}
\end{lemma}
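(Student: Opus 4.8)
The plan is to mimic the structure of Lemma \ref{inclusions} and Lemma \ref{additive-conductor} on the multiplicative side. First I would record the exact sequence
\[
1 \longrightarrow \frac{1+\p_E}{(1+\p_F)(1+\p_E^m)} \longrightarrow \frac{E^\times}{F^\times(1+\p_E^m)} \longrightarrow \frac{E^\times}{F^\times(1+\p_E)} \longrightarrow 1,
\]
observing that each term is a finite abelian group (for the middle and left terms this uses that $1+\p_E$ is a pro-$p$ group and $\p_E^m$ is open; the quotient on the right is $\oo_E^\times/\oo_F^\times(1+\p_E)$ times a value-group factor, which is finite since $E/F$ is a quadratic extension). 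Pontryagin duality turns this into an exact sequence of character groups
\[
1 \longrightarrow \widehat{\left(\frac{E^\times}{F^\times(1+\p_E)}\right)} \longrightarrow \widehat{(E^\times/F^\times)}(m) \longrightarrow \widehat{\left(\frac{1+\p_E}{(1+\p_F)(1+\p_E^m)}\right)} \longrightarrow 1,
\]
where I have used the remark preceding the lemma that $\widehat{(E^\times/F^\times)}(m)$ is exactly the dual of $E^\times/F^\times(1+\p_E^m)$. Exactness on the right (surjectivity of the restriction map) is automatic once one knows the original sequence is exact and all groups are finite, since $\widehat{\phantom{X}}$ is exact on finite abelian groups; this settles the surjectivity assertion in both bullets, with $m$ replaced by $2m$ in the ramified case. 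Note $(1+\p_F)(1+\p_E^m) = 1 + \p_F + \p_E^m$ modulo higher terms, matching the group in the statement (one should check this identification carefully, or simply work with $(1+\p_F)(1+\p_E^m)$ throughout and remark it agrees with $1+\p_F+\p_E^m$).

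Next I would prove the strict inclusions. By the duality above, $\widehat{(E^\times/F^\times)}(m-1) \subsetneq \widehat{(E^\times/F^\times)}(m)$ (unramified case) is equivalent to $F^\times(1+\p_E^{m-1}) \supsetneq F^\times(1+\p_E^m)$, i.e. to $1+\p_E^{m-1} \not\subseteq F^\times(1+\p_E^m)$. This I would deduce from Lemma \ref{inclusions}: if $1+\p_E^{m-1} \subseteq F^\times(1+\p_E^m)$ then every $1+\varpi_E^{m-1}u$ with $u \in \oo_E$ would be of the form $c\cdot(1+\p_E^m)$ with $c \in F^\times$; comparing valuations forces $c \in \oo_F^\times$, and then $c \equiv 1 + \varpi_E^{m-1}u \pmod{\p_E^m}$, which would give $\varpi_E^{m-1}\oo_E \subseteq F + \p_E^m$, i.e. $\p_E^{m-1}\subseteq F+\p_E^m$, hence (dividing by the uniformizer power as in the proof of Lemma \ref{inclusions}) $\oo_E \subseteq F + \p_E$ or a similar contradiction with the unramified hypothesis via the strictness $F+\p_E^m \subsetneq F+\p_E^{m-1}$. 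The ramified case is entirely parallel, using instead the chain $F+\p_E^{2m+1}=F+\p_E^{2m}\subsetneq F+\p_E^{2m-1}$: the equality gives $\widehat{(E^\times/F^\times)}(2m)=\widehat{(E^\times/F^\times)}(2m+1)$ and the strict inclusion gives $\widehat{(E^\times/F^\times)}(2m-1)\subsetneq\widehat{(E^\times/F^\times)}(2m)$, by translating strictness/equality of the additive subgroups $F+\p_E^j$ into strictness/equality of the multiplicative subgroups $F^\times(1+\p_E^j)$ via the $1+\varpi_E^{j}u \mapsto \varpi_E^j u$ correspondence modulo one more power.

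The main obstacle I anticipate is making the passage between "$F+\p_E^j$" statements and "$F^\times(1+\p_E^j)$" statements fully rigorous — in particular handling the valuation bookkeeping and the fact that $F^\times(1+\p_E^j)$ is not simply $1 + (F + \p_E^j)$. A clean way around this is to filter $E^\times/F^\times$ by the images of $1+\p_E^j$ and compute the successive quotients: each $\frac{F^\times(1+\p_E^{j-1})}{F^\times(1+\p_E^{j})}$ is a quotient of $\frac{1+\p_E^{j-1}}{1+\p_E^j} \cong \p_E^{j-1}/\p_E^j \cong \oo_E/\p_E$ by the image of $\p_F\cap \p_E^{j-1}$ modulo $\p_E^j$, and this quotient is nontrivial precisely when $\p_E^{j-1}\not\subseteq F+\p_E^j$, which is exactly the content of Lemma \ref{inclusions}. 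Dualizing these one-step quotients then yields the strict inclusions of character groups directly, and the surjectivity of the restriction map drops out of the same filtration. I would then conclude by assembling the unramified and ramified cases separately, reading off the index pattern ($1$-step jumps versus $2$-step jumps) from the two bullets of Lemma \ref{inclusions}.
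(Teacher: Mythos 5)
Your plan follows essentially the same route as the paper: identify $\widehat{(E^\times/F^\times)}(m)$ with the dual of the finite abelian group $E^\times/F^\times(1+\p_E^m)$, deduce surjectivity of restriction from the embedding of $(1+\p_E)/(1+\p_F+\p_E^m)$ into that group, and convert strictness/equality of the multiplicative filtration into the additive statements of Lemma~\ref{inclusions}. Two technical points to nail down. First, the identification you flag really should be checked, and in the slightly stronger form the paper uses: one needs $(1+\p_E)\cap F^\times(1+\p_E^m)=1+\p_F+\p_E^m$ (not merely $(1+\p_F)(1+\p_E^m)=1+\p_F+\p_E^m$) for your kernel term to be what you wrote; the verification is quick — an element of $F^\times\cap(1+\p_E)$ lies in $1+\p_F$ — but it is the precise fact that makes your exact sequence exact. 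Second, your general valuation argument for the strict inclusion $\widehat{(E^\times/F^\times)}(m-1)\subsetneq\widehat{(E^\times/F^\times)}(m)$ is only set up for $m\ge 2$: for $m=1$ the set $1+\p_E^{0}=\oo_E$ is not a subgroup, the relevant group being $F^\times\oo_E^\times$ rather than $F^\times(1+\p_E^0)$, and the valuation comparison you invoke no longer directly forces $c\in\oo_F^\times$. The paper handles $m=1$ separately (in the unramified case $F^\times\oo_E^\times=E^\times$ while $F^\times(1+\p_E)\subsetneq E^\times$; no strict inclusion is asserted at odd conductors in the ramified case), and you would need to do the same. With those two points filled in, your argument is correct and equivalent to the paper's.
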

\begin{proof}
Note that $(1+\p_E)\cap F^\times(1+\p_E^m)=1+\p_F+\p_E^m$. It follows that $(1+\p_E)/(1+\p_F+\p_E^m)$ imbeds as a subgroup of the finite abeliean group $E^\times/F^\times(1+\p_E^m)$ and the surjectivity of the restriction map follows.
Furthermore, the trivial cahracter is the only unramified character of $E^\times$ that is trivial of $F^\times$ while the group $F^\times(1+\p_E)$ is strictly contained in $E^\times$ if $E/F$ is unramified (it consists of elements of even $E$-valuation).
In the unramified case, the inequality for $m=1$ follows.

To complete the proof of the lemma it is now enough to show that 
\begin{itemize}
\item $1+\p_F+\p_E^m \subsetneq 1+\p_F+\p_E^{m-1}$ if $E$ is unramified over $F$ and $m>1$;
\item $1+\p_F+\p_E^{2m+1}=1+\p_F+\p_E^{2m}\subsetneq 1+\p_F+\p_E^{2m-1}$ if $E$ is ramified over $F$. 
\end{itemize}
If $1+\p_F+\p_E^m = 1+\p_F+\p_E^{m-1}$ then it is easy to see that $F+\p_E^m =F+\p_E^{m-1}$. The inequalities are therefore immediate from Lemma \ref{inclusions}.
If $E/F$ is ramified, the same lemma shows that $\p_E^{2m}\subseteq F+\p_E^{2m+1}$. Write $y\in \p_E^{2m}$ as $y=a+z$ with $a\in F$ and $z\in \p_E^{2m+1}$. Then $a=y-z\in F\cap \p_E^{2m}\subseteq \p_F$.
Therefore, $1+\p_F+\p_E^{2m}=1+\p_F+\p_E^{2m+1}$. The rest of the lemma follows.
\end{proof}

\begin{corollary}\label{cor: multiplicative-conductor}
Assume that $\psi$ has conductor zero and let $m\ge 1$ be an integer. For any $c\in (F\cap \p_E^{-2m})\setminus \p_E^{-2m+1}$, there exists a character $\chi$ of $E^\times$, trivial on $F^\times$ and of conductor $2m$ such that $\psi(cx)=\chi(1+x)$ for all $x\in \p_E^m$.
\end{corollary}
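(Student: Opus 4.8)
The plan is to construct $\chi$ by first defining it on the subgroup $1+\p_E^m$ via the prescribed formula, checking that this is well defined and has the right conductor there, and then extending it to all of $E^\times/F^\times$ using the surjectivity statement in Lemma \ref{multiplicative-conductor}. First I would observe that since $\psi$ has conductor zero, the map $x\mapsto \psi(cx)$ on $\p_E^m$ is a homomorphism from $(\p_E^m,+)$ to $\C^\times$ (additivity is clear), hence $1+x\mapsto \psi(cx)$ is a homomorphism on the group $1+\p_E^m$ modulo whatever subgroup it kills; more precisely, since $2m\ge m+1$ and $c\in\p_E^{-2m}$, for $x,y\in\p_E^m$ we have $xy\in\p_E^{2m}$ and $c\cdot xy\in\oo_E$, so $(1+x)(1+y)=1+(x+y)+xy$ and $\psi(c(x+y+xy))=\psi(c(x+y))$ because $\psi$ has conductor zero. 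Thus $\theta(1+x):=\psi(cx)$ is genuinely a character of $1+\p_E^m$.

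Next I would compute its conductor on this subgroup. The character $\theta$ is trivial on $1+\p_E^k$ (for $k\ge m$) if and only if $\psi(cx)=1$ for all $x\in\p_E^k$, i.e.\ if and only if $c\p_E^k\subseteq\oo_E$ (using conductor zero of $\psi$), i.e.\ $k\ge 2m$; since $c\notin\p_E^{-2m+1}$, it is nontrivial on $1+\p_E^{2m-1}$. So $\theta$ has conductor exactly $2m$ as a character of $1+\p_E$ restricted appropriately. I would also need $\theta$ to be trivial on $(1+\p_E)\cap F^\times(1+\p_E^{2m})=1+\p_F+\p_E^{2m}$ — more precisely on $1+\p_F^{?}$ intersected with $1+\p_E^m$ — so that it factors through $\frac{1+\p_E}{1+\p_F+\p_E^{2m}}$; this uses that for $a\in\p_F$ (with $a$ in the appropriate power so that $1+a\in 1+\p_E^m$), $ca\in F\cap\oo_E=\oo_F$ hence $\psi(ca)=1$ because $\psi|_F=1$. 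This is where one uses $\psi|_F=1$ crucially, in conjunction with $c\in F$.

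Then I would invoke Lemma \ref{multiplicative-conductor}: the restriction map $\widehat{(E^\times/F^\times)}(2m)\to\widehat{\bigl(\frac{1+\p_E}{1+\p_F+\p_E^{2m}}\bigr)}$ is surjective, so there exists $\chi\in\widehat{E^\times}$, trivial on $F^\times$, of conductor $\le 2m$, restricting to $\theta$ on $1+\p_E$; in particular $\chi(1+x)=\psi(cx)$ for $x\in\p_E^m$. The conductor of $\chi$ is then exactly $2m$: it is $\le 2m$ by construction and $\ge 2m$ because its restriction to $1+\p_E$ already has conductor $2m$. The main obstacle I anticipate is the bookkeeping of powers of $\p_F$ versus $\p_E$ in the ramified case — ensuring that ``$1+x$ with $x\in\p_E^m$'' together with the relation $1+\p_F+\p_E^{2m}=1+\p_F+\p_E^{2m+1}$ really does let $\theta$ descend to the finite quotient that Lemma \ref{multiplicative-conductor} hands characters of — but this is precisely the content already extracted in Lemmas \ref{inclusions} and \ref{multiplicative-conductor}, so it should be a matter of citing them correctly rather than new work.
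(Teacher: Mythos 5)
Your proposal takes essentially the same route as the paper: define a character on $1+\p_E^m$ via $\theta(1+x)=\psi(cx)$, check multiplicativity and triviality on the relevant $F$-part using $\psi|_F=1$, and then pull back through Lemma \ref{multiplicative-conductor}. The one place where the argument as written does not quite go through is the clause ``so there exists $\chi\ldots$ restricting to $\theta$ on $1+\p_E$'': your $\theta$ is only defined on $1+\p_E^m$, whereas the surjectivity in Lemma \ref{multiplicative-conductor} hands you preimages of characters of the full quotient $(1+\p_E)/(1+\p_F+\p_E^{2m})$, not of its subgroup $(1+\p_E^m)/\bigl((1+\p_E^m)\cap(1+\p_F+\p_E^{2m})\bigr)$. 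You need an intermediate step, which the paper makes explicit: since any character of a subgroup of a finite abelian group extends to the whole group, first extend the descended $\theta$ to a character of $(1+\p_E)/(1+\p_F+\p_E^{2m})$, and only then apply the surjectivity to lift to $\widehat{(E^\times/F^\times)}(2m)$. With that extension step inserted, the remainder of your argument (well-definedness via $cxy\in\oo_E$, conductor exactly $2m$ because $c\notin\p_E^{-2m+1}$, descent using $\psi|_F=1$ and $c\in F$) matches the paper.
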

\begin{proof}
Note that $\psi_c$ is a character of $E$ that is trivial on $F$ and has conductor $2m$. It therefore restricts to a character of $\p_F+\p_E^m$ that is trivial on $\p_F+\p_E^{2m}$. Also, $x\mapsto 1+x$ defines an isomorphism  $(\p_F+\p_E^m)/(\p_F+\p_E^{2m})\simeq (1+\p_F+\p_E^m)/(1+\p_F+\p_E^{2m})$. There is therefore a unique character $\xi$ of $(1+\p_F+\p_E^m)/(1+\p_F+\p_E^{2m})$ such that $\psi(cx)=\xi(1+x)$ for $x\in \p_F+\p_E^m$. Since any character on a subgroup of a finite abeliean group can be extended to the group, it follows from Lemma \ref{multiplicative-conductor} that there exists $\chi\in \widehat{(E^\times/F^\times)}(2m)$ such that $\psi(cx)=\chi(1+x)$ for $x\in \p_E^m$. This identity also implies that the conductor of $\chi$ equals $2m$.
\end{proof}

\begin{proposition}\label{prop: trivial central}
Let $n\geq 1$ and $\pi\in\Irr(n)$. If there is a constant $\gamma$ such that 
$\gamma(1/2,\pi,\chi,\psi)=\gamma$ for any distinguished character $\chi$ of $E^\times$, then $\gamma=1$ and 
the central character $c_\pi$ of $\pi$ is trivial on $F^\times$. 
\end{proposition}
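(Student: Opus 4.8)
The plan is to use stability of local gamma factors under twisting by highly ramified characters, adapting the argument of \cite{MR3349305}. Let $\tau_1,\dots,\tau_k\in\Irr_\cusp$ be the cuspidal support of $\pi$, with $\tau_i$ a representation of $\GL_{m_i}(E)$, so that $m_1+\cdots+m_k=n$ and $c_\pi=\prod_i\omega_{\tau_i}$, where $\omega_{\tau_i}$ is the central character of $\tau_i$. By Lemma \ref{lem: cuspsupp}, $\gamma(s,\pi,\chi,\psi)=\prod_{i=1}^k\gamma(s,\tau_i\otimes\chi,\psi)$, each factor a standard $\GL_{m_i}\times\GL_1$ Rankin--Selberg gamma factor. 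The stability theorem (see \cite{MR3349305} and the references therein) implies that once the conductor of $\chi$ is large enough, depending only on $\pi$, the factor $\gamma(s,\tau_i\otimes\chi,\psi)$ depends only on $\omega_{\tau_i}$; comparing $\tau_i$ with a principal series of the same central character and using multiplicativity yields $\gamma(s,\tau_i\otimes\chi,\psi)=\gamma(s,\omega_{\tau_i}\chi,\psi)\,\gamma(s,\chi,\psi)^{m_i-1}$. Enlarging the conductor further and applying the $\GL_1$ stability identity $\gamma(s,\mu\chi,\psi)=\mu(c_\chi)\,\gamma(s,\chi,\psi)$ — valid when $\chi$ is sufficiently more ramified than $\mu$, with $c_\chi\in E^\times$ the element determined by $\psi(c_\chi x)=\chi(1+x)$ for $x$ near $0$, exactly as in Corollary \ref{cor: multiplicative-conductor} — we obtain, for every $\chi$ of sufficiently large conductor,
\[
\gamma(s,\pi,\chi,\psi)=c_\pi(c_\chi)\,\gamma(s,\chi,\psi)^n .
\]

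Next I would pin down the $\GL_1$ factor for distinguished $\chi$. Since $\chi$ distinguished forces $\chi^\gal=\chi^{-1}$, combining \eqref{eq: Leps} with property (3) of the local factors gives $\epsilon(1/2,\chi,\psi)^2=1$, and in fact $\epsilon(1/2,\chi,\psi)=1$ by Theorem \ref{thm: eps triv} applied to the pair $(\chi,\trivchar)$ of distinguished representations in $\Irr(1)$. As $L(s,\chi)=1$ for ramified $\chi$, also $\gamma(1/2,\chi,\psi)=1$. Hence the hypothesis $\gamma(1/2,\pi,\chi,\psi)=\gamma$ together with the displayed formula gives $\gamma=c_\pi(c_\chi)$ for every distinguished $\chi$ of sufficiently large conductor.

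It remains to let $\chi$ vary. By Corollary \ref{cor: multiplicative-conductor}, for every large $m$ and every $c\in(F\cap\p_E^{-2m})\setminus\p_E^{-2m+1}$ there is a distinguished character $\chi$ of conductor $2m$ with $c_\chi=c$; hence $\gamma=c_\pi(c)$ for all such $c$. This set is a coset of $\oo_F^\times$ in $F^\times$, namely $\varpi_F^{-m}\oo_F^\times$ when $E/F$ is ramified and $\varpi_F^{-2m}\oo_F^\times$ when $E/F$ is unramified, and the relevant exponents run through all sufficiently large integers (ramified case), resp. all sufficiently large even integers (unramified case). Comparing $c=\varpi_F^{-m}$ with $c=\varpi_F^{-m}u$, $u\in\oo_F^\times$, forces $c_\pi|_{\oo_F^\times}=\trivchar$; comparing two consecutive admissible exponents forces $c_\pi(\varpi_F)=1$, at least in the ramified case, and in the unramified case after also invoking distinguished characters of odd conductor (produced by the evident analogue of Corollary \ref{cor: multiplicative-conductor}) to cover the remaining residue. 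Since $F^\times=\varpi_F^{\Z}\times\oo_F^\times$, it follows that $c_\pi$ is trivial on $F^\times$, and then $\gamma=c_\pi(c_\chi)=1$.

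The main obstacle is the first paragraph: one needs the precise stable form of the $\GL_n\times\GL_1$ gamma factor at $s=1/2$ and, crucially, the identification of its stable value with $c_\pi$ evaluated at exactly the element $c_\chi$ furnished by Corollary \ref{cor: multiplicative-conductor}, which requires matching the normalisations of the conductor of $\chi$ and of $\psi$; this is why it is convenient to work with characters of even conductor, where the $\GL_1$ stability identity takes its cleanest form. The split between the ramified and unramified cases when sweeping out $F^\times$ in the last step is a secondary bookkeeping point.
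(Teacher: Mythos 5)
Your proposal follows the same strategy as the paper's proof: invoke the Jacquet--Shalika stability of $\gamma$-factors under highly ramified twists, identify the stable value with $c_\pi(c)^{\pm 1}$ where $c$ is the element furnished by Corollary \ref{cor: multiplicative-conductor}, compute the $\GL_1$ contribution for distinguished characters to be $1$, and then let $c$ sweep out enough of $F^\times$ to force $c_\pi|_{F^\times}=\mathbf 1$. The main cosmetic difference is how stability enters: you re-derive the $\GL_n\times\GL_1$ formula from cuspidal support, multiplicativity, and the $\GL_1$ identity, whereas the paper cites \cite[(2.7)]{MR787183} directly, which already gives the formula $\gamma(s,\pi,\chi',\psi)=c_\pi(c)^{-1}\epsilon(s,\mathbf 1,\chi',\psi)^n$ (so your detour buys nothing and introduces a sign-convention ambiguity you do not resolve). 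You also do not carry out the preliminary reduction the paper makes first, namely replacing $\psi$ by $\psi_a$ ($a\in F^\times$) to normalize $\psi$ to conductor $0$, using $\gamma(1/2,\pi,\chi,\psi_a)=c_\pi(a)\gamma(1/2,\pi,\chi,\psi)$ and Lemma \ref{additive-conductor}; you flag this only as an ``obstacle'' at the end, but it is essential for Corollary \ref{cor: multiplicative-conductor} to apply as stated.

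On the final step, you try to sweep out $F^\times$ coset by coset and notice that the available $c$'s only have one residue class of valuations in the unramified case, so you appeal to an unproved ``evident analogue'' of Corollary \ref{cor: multiplicative-conductor} for odd conductors. The paper instead uses a quotient trick: any $a\in F^\times$ is a ratio $c_1/c_2$ with $v_F(c_i)\le d$, so $c_\pi(a)=c_\pi(c_1)c_\pi(c_2)^{-1}=\gamma^{-1}\gamma=1$. This is cleaner, but note that your worry is not misplaced: in the unramified case the $c$'s produced by Corollary \ref{cor: multiplicative-conductor} have even $F$-valuation only, so the quotient argument as literally stated only handles $a$ of even valuation, and one does need the extra input you gesture at (distinguished characters of odd conductor in the unramified case, which exist by Lemma \ref{multiplicative-conductor}, together with the corresponding analogue of Corollary \ref{cor: multiplicative-conductor}). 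So while your write-up is sketchier, your instinct about where the remaining bookkeeping lies is sound, and anyone fleshing out either argument should spell that out.
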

\begin{proof}
The proof is an adaptation of \cite[Corollary 2.7]{MR3349305}. First, since
\[
\gamma(1/2,\pi,\chi,\psi_a)=c_\pi(a)\gamma(1/2,\pi,\chi,\psi), \ \ \ a\in F^\times, 
\]
by Lemma \ref{additive-conductor} we may assume that $\psi$ has conductor $0$. 
According to \cite[(2.7)]{MR787183} (to which we refer directly rather than to \cite[Proposition 2.6]{MR3349305} as the conductor of the additive character is $1$ in [ibid.], and 
$0$ here as in \cite{MR787183}), there is a positive integer $m(\pi)$ such that for $m\geq m(\pi)$, any character $\chi'$ of $E^\times$ of conductor $m\geq m(\pi)$ 
and any $c\in \p_E^{-m}$ which satisfies $\chi'((1+x))=\psi(cx)$ for all $x\in\p_E^{\lfloor (m+1)/2 \rfloor}$, we also have
\begin{equation}\label{JS} 
\gamma(s,\pi,\chi',\psi)=\epsilon(s,\pi,\chi',\psi)=c_\pi(c)^{-1}\epsilon(s,\mathbf{1},\chi',\psi)^n.
\end{equation}

Let $m$ be such that $2m\geq m(\pi)$ and take any $c\in \p_F^{-m}\setminus \p_F^{-m+1}$ if $E/F$ is ramified and any $c\in \p_F^{-2m}\setminus \p_F^{-2m+1}$ otherwise.
Then, $c\in (F\cap \p_E^{-2m})\setminus \p_E^{-2m+1}$ and by Corollary \ref{cor: multiplicative-conductor}, the character $\psi_c$ restricted to 
$\p_E^m$ is of the form $x\mapsto \chi(1+x)$ for $\chi$ a character of 
$E^\times/F^\times$ of conductor $2m$.

As $\epsilon(1/2,\mathbf{1}_{F^\times},\chi,\psi)=1$ for a distinguished character $\chi$, we deduce  from \eqref{JS} with $s=1/2$ and $\chi'=\chi$ that 
$c_\pi(c)^{-1}=\gamma$. As any element of $F^\times$ can be expressed as the quotient of two elements of $F$-valuation at most $d$, for any fixed $d$ (take
$d=-\lfloor m(\pi)/2 \rfloor$ if $E/F$ is ramified and $d=-m(\pi)$ otherwise), we 
deduce that ${c_\pi}_{|F^\times}=\mathbf{1}_{F^\times}$ and hence that $\gamma=1$.
\end{proof}


\section{The relative converse theorem for discrete series}\label{sec: discrete converse}

In this section we study the extent to which the property $\gamma(1/2,\pi,\tau,\psi)=1$ for `enough' distinguished representations $\tau$, implies that $\pi$ is distinguished.

For a class $\Irr_{\bullet}$ of representations in $\Irr$ let $\Irr_{\bullet}(n)=\Irr_{\bullet}\cap \Irr(n)$.

We will build our relative converse theorem for discrete series upon the following cuspidal relative converse theorem of \cite{MR3377749}, which itself builds upon and 
refines the results of \cite{MR2716711}. 
\begin{theorem}[\cite{MR3377749}]\label{thm: Ok}
Let $n>2$ and $\rho\in\Irr_\cusp(n)$ have a central character that is trivial on $F^\times$.
Then $\rho$ is distinguished if and only if $\gamma(1/2,\rho,\tau,\psi)=1$ for any distinguished $\tau\in \Irr_\ugen(n-2)$.
\end{theorem}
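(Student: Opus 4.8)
\emph{The ``only if'' direction is a formal consequence of what has already been established.} Suppose $\rho$ is distinguished. By Lemma \ref{lem: flick} \eqref{part: sym} we have $\rho^\gal\simeq\rho^\vee$, so $\rho\in\Irr_\ucusp$. Let $\tau\in\Irr_\ugen(n-2)$ be distinguished; since a generic representation equals its own standard module, Proposition \ref{prop: dist std} lets us write
\[
\tau\simeq\delta_1^\gal\times\cdots\times\delta_t^\gal\times\sigma_1\times\cdots\times\sigma_s\times\delta_t^\vee\times\cdots\times\delta_1^\vee
\]
with $\delta_i\in\Irr_\sqr$, $e(\delta_i)\ge 0$ and $\sigma_j\in\Irr_\sqr$ distinguished. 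By Lemma \ref{lem: mult} \eqref{part: gamma},
\[
\gamma(1/2,\rho,\tau,\psi)=\prod_{i=1}^{t}\bigl(\gamma(1/2,\rho,\delta_i^\gal,\psi)\,\gamma(1/2,\rho,\delta_i^\vee,\psi)\bigr)\,\prod_{j=1}^{s}\gamma(1/2,\rho,\sigma_j,\psi).
\]
The computation in the proof of Lemma \ref{lem: conj dual}, run with $\gamma$ in place of $\epsilon$ and using $\rho^\gal\simeq\rho^\inv$, shows that each factor $\gamma(1/2,\rho,\delta_i^\gal,\psi)\gamma(1/2,\rho,\delta_i^\vee,\psi)$ equals $1$, and each $\gamma(1/2,\rho,\sigma_j,\psi)$ equals $1$ by Lemma \ref{lem twosq}, since $\rho$ and $\sigma_j$ are conjugate self-dual, essentially square integrable and distinguished. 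Hence $\gamma(1/2,\rho,\tau,\psi)=1$.

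\emph{The converse direction is the substantial one.} Assume $\gamma(1/2,\rho,\tau,\psi)=1$ for every distinguished $\tau\in\Irr_\ugen(n-2)$; we must produce a nonzero $H_n$-invariant linear form on $\rho$. The natural approach, and the one realized in \cite{MR3377749} following Ok \cite{MR2716711}, is to pass to a global setting. One globalizes $\rho$ to a cuspidal automorphic representation $\Pi$ of $\GL_n(\A_{\mathcal E})$, where $\mathcal E/\mathcal F$ is a quadratic extension of number fields with a finite place $v_0$ at which $\mathcal E_{v_0}/\mathcal F_{v_0}\simeq E/F$ and $\Pi_{v_0}\simeq\rho$, arranging simultaneously: ramification only at $v_0$ (and, if unavoidable, at one controlled auxiliary place), prescribed distinguished cohomological components at the archimedean places (possible by Kemarsky's classification \cite{MR3404670}), and central character trivial on $\A_{\mathcal F}^\times$ (possible since $\omega_\rho|_{F^\times}=\trivchar$). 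One then selects an auxiliary $\GL_{n-2}(\A_{\mathcal F})$-distinguished cuspidal automorphic representation $\mathcal T$ of $\GL_{n-2}(\A_{\mathcal E})$ — so that $\mathcal T_v$ is distinguished and generic unitary at every place — and runs the global functional equation of $L(s,\Pi\times\mathcal T)$.

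The point is that the global root number $\epsilon(1/2,\Pi\times\mathcal T)=\prod_v\epsilon_v(1/2,\Pi_v\times\mathcal T_v,\psi_v)$ can be evaluated: the unramified finite places contribute $1$; at the archimedean places $\Pi_v$ and $\mathcal T_v$ are both distinguished, so the archimedean analogue (Theorem \ref{thm: arch rtnmb}) gives $1$; and at $v_0$ the pair $(\Pi_{v_0},\mathcal T_{v_0})=(\rho,\tau)$ has $\tau\in\Irr_\ugen(n-2)$ distinguished, so the hypothesis gives $\gamma(1/2,\rho,\tau,\psi)=1$, while $L(s,\rho,\tau)$ is holomorphic at $s=1/2$ (Lemmas \ref{lem: sqr L} and \ref{lem: mult} \eqref{part: Leps}, since $\tau\in\Irr_{<\frac12}$ and $\rho\in\Irr_\ucusp$), hence $\epsilon(1/2,\rho,\tau,\psi)=1$ by Lemma \ref{lem: L prop} \eqref{part: gamhalf}. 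Thus $\epsilon(1/2,\Pi\times\mathcal T)=1$. Feeding this triviality into the analysis of the global period that detects distinction of $\Pi$ — namely, via Flicker's integral representation of the Asai $L$-function of $\Pi$, distinction of $\Pi$ under $\GL_n(\A_{\mathcal F})$ is governed by a pole of the (partial) Asai $L$-function, which in turn is controlled by the relevant Rankin--Selberg functional equations as $\mathcal T$ varies — one forces that pole to occur, so $\Pi$ is globally distinguished; restricting the resulting global $\GL_n(\A_{\mathcal F})$-invariant functional to the $v_0$-component shows $\rho=\Pi_{v_0}$ is distinguished. Conjugate self-duality $\rho^\gal\simeq\rho^\vee$ then comes for free by Lemma \ref{lem: flick} \eqref{part: sym}, and the dichotomy of Proposition \ref{prop: dist}(1) together with the observation that distinction forces the central character to be trivial on $F^\times$ whereas $\eta_{E/F}$-distinction forces it to agree with $\eta_{E/F}^{\,n}$ there already handles the odd-$n$ case and isolates the even-$n$ case as the one genuinely decided by the global argument.

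\emph{The main obstacle} is precisely this globalization together with its local--global compatibility: one must globalize $\rho$ to an automorphic representation that is simultaneously cuspidal, of tightly controlled ramification, with the prescribed archimedean type and central character, and such that letting $\mathcal T_{v_0}$ range over enough distinguished members of $\Irr_\ugen(n-2)$ makes the nonvanishing of the global period — hence the pole of the Asai $L$-function, hence the side of the distinguished/$\eta_{E/F}$-distinguished dichotomy — depend only on the local data at $v_0$ and not be swamped by the auxiliary choices. This is the technical core of \cite{MR3377749}. Finally, the reason the twisting range is $n-2$ rather than $n-1$, and the reason the hypothesis $\omega_\rho|_{F^\times}=\trivchar$ is imposed, is the same phenomenon as in the non-relative local converse theorem of Henniart \cite{MR1228128} and its sharpening by Chen \cite{MR2257542}: once the central character is fixed, the gamma factors of the top-level $\GL_{n-1}$-twists at the self-dual point $s=1/2$ are already determined by those of the twists of rank $\le n-2$ through $\gamma(s,\cdot,\cdot,\psi)\gamma(1-s,\cdot,\cdot,\psi)=1$ for conjugate self-dual data, so testing additionally against distinguished representations of rank $n-1$ yields nothing new.
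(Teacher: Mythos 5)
This theorem is imported verbatim from \cite{MR3377749} and used as a black box in the present paper; no proof of it appears here, so there is no ``paper's proof'' to compare against. Your ``only if'' direction is fine and is in any case a special case of what the paper later proves as Theorem \ref{thm: gamtriv}: a distinguished cuspidal $\rho$ is unitary, hence tempered, $\Irr_\ugen\subseteq\Irr_{<\frac12}$, and $\gamma$ is symmetric in its two arguments.

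Your account of the hard direction, however, misdescribes the method and contains a gap you do not close. The argument in \cite{MR3377749} (and in Ok's thesis before it) is local, not global --- the title of that paper is ``\dots and local converse theorems'', and the quantitative form of \cite[Proposition 3.0.3]{MR3377749}, quoted here as Lemma \ref{lem: HO}, exhibits an explicit discrete series $\delta'$ with $\gamma(1/2,\delta,\delta',\psi)=-1$, something only a local construction can produce. Schematically, one builds a canonical $P_n(F)$-invariant linear functional on the Whittaker/Kirillov model of $\rho$ and uses the local Rankin--Selberg functional equations, together with the hypotheses $\gamma(1/2,\rho,\tau,\psi)=1$, to upgrade $P_n(F)$-invariance to $\GL_n(F)$-invariance, exactly in the style of Henniart and Chen. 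Your globalization sketch breaks at its central step: from $\epsilon(1/2,\Pi\times\mathcal T)=1$ for the twists $\mathcal T$ you can arrange, you assert that a pole of the Asai $L$-function of $\Pi$ at $s=1$ ``is forced'', hence that $\Pi$ is globally distinguished. But triviality of a root number is a sign (parity) constraint, at best a necessary condition compatible with a nonvanishing period; it cannot by itself produce a pole of the Asai $L$-function. Indeed, this paper exhibits (Example \ref{optimal}) non-distinguished $\tau$ with $\epsilon(1/2,\tau,\pi,\psi)=1$ for all distinguished $\pi$, so root-number triviality on its own genuinely does not imply distinction; the implication you need is exactly the substantive content a proof must supply, and nothing in the sketch supplies it.
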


In fact, it follows from Proposition \ref{prop: trivial central} that the assumption on the central character can be replaced by the further requirement $\gamma(1/2,\rho,\tau,\psi)=1$ for any distinguished character $\tau\in \Irr(1)$.

We extend Theorem \ref{thm: Ok} to representations in $\Irr_\sqr$ and furthermore, show that it is enough to twist by distinguished tempered representations of $G_m$, $m\le n-2$, except for discrete series of $G_n$ of the form $St_2(\rho)$ with $\rho$ a cuspidal representation. For these discrete series we only obtain a weaker result, which says that it is enough to twist by tempered distinguished representations of 
$G_m$ for $m\le n$. We focus on the case $n>3$, as for $n=2,3$, it is known by \cite{MR1104321} and \cite{MR3377749} that the converse theorem with twists by distinguished characters of $E^\times$ holds for all generic unitary representations (and in fact for all generic representations when $n=2$ by \cite{MR2449011}). When $n>3$, Example \ref{optimal} shows that even for tempered representations, such a converse theorem cannot hold, so we restrict to discrete series. 

\textit{For the rest of this section, we assume $n>3$}. Let $c_\pi$ denote the central character of $\pi\in \Irr$. Our proof is based on three key observations that we first formulate as lemmas. The first is \cite[Proposition 3.0.3]{MR3377749}, together with the observation that $\lfloor n/2 \rfloor \leq n-2$ for $n\geq 3$.

\begin{lemma}\label{lem: HO}
Let $\delta \in \Irr_\sqr(n)\setminus \Irr_\cusp(n)$ satisfy $\delta^\gal\simeq\delta^\vee$ and 
${c_\delta}_{|F^\times}=\mathbf{1}$. If $\delta$ is not distinguished then there exists $k\leq n-2$ and a distinguished $\delta'\in \Irr_\sqr(k)$ such that $\gamma(1/2,\delta,\delta',\psi)=-1$.  \qed
\end{lemma}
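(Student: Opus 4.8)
\emph{Reduction to a concrete shape.} The plan is to deduce the statement from \cite[Proposition 3.0.3]{MR3377749}, but I would first see why a direct argument in the spirit of Section~\ref{sec: non-arch root} works. Since $\delta\notin\Irr_\cusp(n)$, write $\delta=\St_m(\rho)$ with $\rho\in\Irr_\cusp$ and $m\geq 2$. Using $\St_m(\cdot)^\gal=\St_m(\cdot^\gal)$, $\St_m(\cdot)^\vee=\St_m(\cdot^\vee)$ and injectivity of $\rho\mapsto\St_m(\rho)$, the hypothesis $\delta^\gal\simeq\delta^\vee$ forces $\rho^\gal\simeq\rho^\vee$, so Proposition~\ref{prop: dist}(1) applies: $\rho$ is distinguished or $\eta_{E/F}$-distinguished, exactly one. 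If $\rho$ were $\eta_{E/F}$-distinguished, then $c_\rho|_{F^\times}=\eta_{E/F}$ and $c_\delta=c_\rho^m$, so $c_\delta|_{F^\times}=\mathbf{1}$ forces $m$ even; but then Proposition~\ref{prop: dist}(2) makes $\delta=\St_m(\rho)$ distinguished, a contradiction. Hence $\rho$ is distinguished, and by Proposition~\ref{prop: dist}(2) the non-distinction of $\delta$ forces $m$ to be even. In particular $\rho^\vee\simeq\rho^\gal$ is again a distinguished cuspidal representation, and $\deg\rho=n/m\leq n/2\leq n-2$ since $n>3$.

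\emph{Choice of $\delta'$ and the gamma computation.} I would take $\delta'=\rho^\vee\in\Irr_\cusp(k)\subseteq\Irr_\sqr(k)$ with $k=\deg\rho\leq n-2$. Since $\gamma$-factors depend only on the cuspidal support (Lemma~\ref{lem: cuspsupp}) and that of $\St_m(\rho)$ is $\{\nu^{(m-1)/2-a}\rho:0\leq a\leq m-1\}$, one has
\[
\gamma(s,\delta,\delta',\psi)=\prod_{a=0}^{m-1}\gamma\bigl(s+\tfrac{m-1}{2}-a,\rho,\rho^\vee,\psi\bigr),
\]
so at $s=1/2$ the arguments run exactly once through each integer in $\{1-\tfrac m2,\dots,\tfrac m2\}$, hitting $0$ and $1$ each once because $m$ is even. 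By Lemma~\ref{lem: cusp L}, $\gamma(w,\rho,\rho^\vee,\psi)$ is holomorphic and nonzero away from $w\in\{0,1\}$, has a simple zero at $w=0$ (the denominator $L(w,\rho,\rho^\vee)$ has a simple pole there) and a simple pole at $w=1$ (the numerator $L(1-w,\rho^\vee,\rho)$ has a simple pole there); and $\rho^\gal\simeq\rho^\vee$ yields, via Lemma~\ref{lem: L prop}(3), the functional equation $\gamma(w,\rho,\rho^\vee,\psi)\gamma(1-w,\rho,\rho^\vee,\psi)=1$. Pairing $a\leftrightarrow m-1-a$ matches the argument $w$ with $1-w$; as $m$ is even this involution has no fixed point, the pairs with $w\notin\{0,1\}$ contribute $\gamma(w)\gamma(1-w)=1$, and the remaining pair is the one with arguments $0$ and $1$. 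Evaluating the product as a limit, the contribution of that pair is $\lim_{\varepsilon\to 0}\gamma(\varepsilon,\rho,\rho^\vee,\psi)\,\gamma(1+\varepsilon,\rho,\rho^\vee,\psi)$, which is the product of the leading coefficient of the simple zero at $0$ and the residue of the simple pole at $1$; by the functional equation this product equals $-1$. Hence $\gamma(1/2,\delta,\delta',\psi)=-1$.

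\emph{Main point.} The one delicate step is this last limit: the product of local $\gamma$-factors must be evaluated at a point where one factor vanishes and another blows up, and the resulting sign $-1$ must be extracted from $\gamma(w)\gamma(1-w)\equiv 1$ rather than from a naive pointwise product (which would give $+1$). Everything else---that $\rho$ is distinguished with $m$ even, that $\rho^\vee$ is distinguished, and the bound $\deg\rho\leq\lfloor n/2\rfloor\leq n-2$---is routine bookkeeping with Proposition~\ref{prop: dist} and the basic properties of the $L$- and $\epsilon$-factors from Section~\ref{notation}. This is exactly the content of \cite[Proposition 3.0.3]{MR3377749} (which produces a $\delta'$ of dimension at most $\lfloor n/2\rfloor$), and the stated form follows upon noting $\lfloor n/2\rfloor\leq n-2$ for $n\geq 3$.
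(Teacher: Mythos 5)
The paper does not prove this lemma itself; it cites \cite[Proposition 3.0.3]{MR3377749} together with the observation $\lfloor n/2\rfloor\le n-2$ for $n\ge 3$. Your attempt to re-derive it directly is reasonable, and the analytic step at the end (evaluating $\lim_{\varepsilon\to 0}\gamma(\varepsilon)\gamma(1+\varepsilon)=-1$ from the functional equation $\gamma(w)\gamma(1-w)=1$ together with the simple zero at $0$ and simple pole at $1$) is correct. However, there is a genuine gap in the reduction.

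The error is the claim ``if $\rho$ were $\eta_{E/F}$-distinguished, then $c_\rho|_{F^\times}=\eta_{E/F}$.'' Taking $h=zI_k$ with $z\in F^\times$ and $k=\deg\rho$ in the defining relation of a nonzero $\ell\in\Hom_{H_k}(\rho,\eta_{E/F}\circ\det)$ gives $c_\rho(z)=\eta_{E/F}(z^k)$, i.e.\ $c_\rho|_{F^\times}=\eta_{E/F}^{\deg\rho}$. When $\deg\rho$ is even this is trivial, and then $c_\delta|_{F^\times}=\mathbf{1}$ imposes no parity constraint on $m$. Concretely, if $\rho_0$ is a distinguished cuspidal representation of $\GL_2(E)$ and $\rho=\eta\rho_0$, then $\rho$ is $\eta_{E/F}$-distinguished of even degree with $c_\rho|_{F^\times}=\mathbf{1}$ and $\rho^\gal\simeq\rho^\vee$; for odd $m\ge 3$ the representation $\delta=\St_m(\rho)$ satisfies all hypotheses of the lemma and is not distinguished by Proposition \ref{prop: dist}(2). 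Your argument incorrectly concludes in this situation that $\rho$ is distinguished and $m$ even. In this unhandled case your proposed twist $\delta'=\rho^\vee\simeq\rho^\gal$ is $\eta_{E/F}$-distinguished, not distinguished, so it does not qualify. Moreover the natural replacement $\delta'=\St_2(\rho^\vee)$, which is distinguished by Proposition \ref{prop: dist}(2), gives $\gamma(1/2,\delta,\St_2(\rho^\vee),\psi)=+1$: the critical arguments $0$ and $1$ now each occur with multiplicity two, so the two $-1$ contributions cancel. Thus the case ``$\rho$ $\eta_{E/F}$-distinguished of even degree, $m$ odd'' is not covered by your direct argument, and for it you genuinely need the cited \cite[Proposition 3.0.3]{MR3377749} (or a separate choice of $\delta'$ and a different computation).
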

The following observation will also prove itself useful.
\begin{lemma}\label{lem: is sym}
Let $\rho\in\Irr_\ucusp$ and $k\in\N$. If 
\[
\lim_{s\to k/2}\gamma(1/2+s,\St_k(\rho),\rho^\gal,\psi)\gamma(1/2-s,\St_k(\rho),\rho^\vee,\psi)=1
\]
then $\rho^\gal\simeq \rho^\vee$ and therefore $\St_k(\rho)^\gal\simeq \St_k(\rho)^\vee$. 
\end{lemma}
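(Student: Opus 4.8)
The plan is to analyze the poles of the relevant $L$-factors and exploit the fact that the limit of the product of $\gamma$-factors being a finite nonzero number (namely $1$) forces the pole structure on the two sides to match, which in turn forces $\rho^\gal\simeq\rho^\vee$. First I would use Lemma~\ref{lem: cuspsupp} to expand $\gamma(1/2+s,\St_k(\rho),\rho^\gal,\psi)$ and $\gamma(1/2-s,\St_k(\rho),\rho^\vee,\psi)$ as products of cuspidal $\gamma$-factors: writing $\St_k(\rho)$ in terms of its cuspidal support $\nu^{(k-1)/2}\rho,\dots,\nu^{(1-k)/2}\rho$, we get
\[
\gamma(1/2+s,\St_k(\rho),\rho^\gal,\psi)=\prod_{j=1}^{k}\gamma\Bigl(1/2+s+\tfrac{k+1}{2}-j,\rho,\rho^\gal,\psi\Bigr)
\]
and similarly for the second factor with $s$ replaced by $-s$ and $\rho^\gal$ by $\rho^\vee$. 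Each cuspidal $\gamma$-factor $\gamma(w,\rho,\rho^\gal,\psi)$ equals $\epsilon(w,\rho,\rho^\gal,\psi)\cdot L(1-w,\rho^\vee,(\rho^\gal)^\vee)/L(w,\rho,\rho^\gal)$, and by Lemma~\ref{lem: cusp L} (with $\rho\in\Irr_\ucusp$) such an $L$-factor is entire and nonvanishing away from the locus where $(\rho^\gal)^\vee\simeq\nu^u\rho$ for $u$ imaginary — equivalently where $\rho^\gal\simeq\rho^\vee$ up to an unramified unitary twist, which, since both are unitary cuspidal with the same "central exponent," means $\rho^\gal\simeq\nu^{\iii t}\rho^\vee$.

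The key step is then: if $\rho^\gal\not\simeq\nu^{\iii t}\rho^\vee$ for any real $t$, all the cuspidal $L$-factors appearing are entire and nonzero everywhere, hence each $\gamma(w,\rho,\rho^\gal,\psi)$ is entire and nowhere zero (being $\epsilon$ times a ratio of such $L$'s), so the product is automatically holomorphic and nonzero at $s=k/2$ and the limit hypothesis gives no contradiction by itself — so I need to be more careful and rule out the twisted case $\rho^\gal\simeq\nu^{\iii t}\rho^\vee$ with $t\ne 0$ as well. Here I would observe that if $\rho^\gal\simeq\nu^{\iii t}\rho^\vee$, then substituting into the product and tracking which factor $\gamma(w,\rho,\rho^\gal,\psi)$ can have a pole (at $1-w\in\{0\}$, i.e.\ a pole of $L(1-w,\rho^\vee,(\rho^\gal)^\vee)=L(1-w,\rho^\vee,\nu^{-\iii t}\rho)$, which occurs at $1-w=\iii t$, i.e.\ $w=1-\iii t$) or a zero (at $w=\iii t$, from the denominator $L(w,\rho,\rho^\gal)=L(w,\rho,\nu^{\iii t}\rho^\vee)$ having a pole at $w=\iii t$), I can locate precisely where the limit $s\to k/2$ produces a pole or zero. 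The point is that in the first product the "dangerous" index $j$ lands at $s=k/2$ only when $t=0$ (the arithmetic $1/2+s+(k+1)/2-j = 1-\iii t$ or $=\iii t$ with $s=k/2$ forces $\iii t\in\tfrac12\Z$, hence $t=0$ since $t$ is real), and likewise for the second product; but I also need the two products' singular behaviors to cancel, and matching the pole in one against the zero in the other pins down $t=0$. Concretely: at $s=k/2$ the first product contributes a pole (from $j=k$: the factor $\gamma(1/2+k/2+(k+1)/2-k,\rho,\rho^\gal,\psi)=\gamma(1,\rho,\rho^\gal,\psi)$, which has a pole iff $L(0,\rho,\rho^\gal)$ has a pole iff $\rho^\gal\simeq\rho^\vee$) and the second contributes a zero or vice versa, and requiring the limit to be the finite value $1$ forces the pole-and-zero to be present and to cancel, which happens exactly when $\rho^\gal\simeq\rho^\vee$.

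The cleanest route, which I would actually write up, is contrapositive: assume $\rho^\gal\not\simeq\rho^\vee$. Then by Lemma~\ref{lem: cusp L} every cuspidal $L$-factor of the form $L(\,\cdot\,,\rho,(\rho^\gal)^\vee)$, $L(\,\cdot\,,\rho^\vee,\rho^\gal)$, $L(\,\cdot\,,\rho,\rho)$, etc.\ that can occur is either identically $1$ or has its poles at points determined by whether $\rho^\gal\simeq\nu^u\rho$ or $\rho\simeq\nu^u\rho$ — and I claim that along $s=k/2$ exactly one of the two $\gamma$-products acquires a genuine simple pole (coming from the denominator $L$ of the \emph{other} product vanishing, or from its own numerator $L$ having a pole) while the companion product is holomorphic nonzero there, so the limit is $0$ or $\infty$, never $1$. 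Verifying this dichotomy is the main obstacle: it requires a careful bookkeeping, across the possible shapes of $\rho^\gal$ relative to $\rho$, of exactly which of the $2k$ cuspidal factors is singular at $s=k/2$ and with what order, using Lemma~\ref{lem: cusp L} to control each. Once that case analysis is done, the conclusion $\St_k(\rho)^\gal\simeq\St_k(\rho)^\vee$ is immediate from $\St_k(\rho)^\vee\simeq\St_k(\rho^\vee)$ and $\St_k(\rho)^\gal\simeq\St_k(\rho^\gal)$ together with $\rho^\gal\simeq\rho^\vee$.
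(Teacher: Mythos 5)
Your overall strategy --- expand both $\gamma$-factors into products of cuspidal ones and compare pole/zero behaviour at $s=k/2$ --- is viable and genuinely different from the paper's, but the way you set up the case analysis contains real errors that would derail the write-up. The pivotal fact, which you circle around but never nail down, is that the second factor $\gamma(1/2-s,\St_k(\rho),\rho^\vee,\psi)$ has an \emph{unconditional} simple zero at $s=k/2$: it comes from the $j=1$ cuspidal term $\gamma\bigl(1/2-s+\tfrac{k-1}{2},\rho,\rho^\vee,\psi\bigr)$, whose argument tends to $0$ and whose denominator $L(0,\rho,\rho^\vee)$ has a simple pole because $\rho\simeq\rho$. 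That zero has nothing to do with any relation between $\rho^\gal$ and $\rho^\vee$. On the other side, the first factor has a simple pole at $s=k/2$ (from the $j=k$ term at argument $1$, via a pole of the numerator $L(0,\rho^\vee,(\rho^\gal)^\vee)$) if and only if $\rho^\gal\simeq\rho^\vee$. Thus the only way the product can tend to a nonzero limit is for that pole to be present, i.e.\ $\rho^\gal\simeq\rho^\vee$. Your contrapositive phrasing is off: when $\rho^\gal\not\simeq\rho^\vee$ neither product acquires a pole --- one has a zero, the other is regular --- and the parenthetical ``denominator $L$ of the other product vanishing'' cannot be right, since Rankin--Selberg $L$-factors are nowhere vanishing; it is their \emph{poles} that create zeros of $\gamma$. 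The detour about $\rho^\gal\simeq\nu^{\iii t}\rho^\vee$ with $t\neq 0$ is also a red herring: poles in the first product can only occur at the real arguments $k+1-j$, and the pole condition $\rho^\gal\simeq\nu^{1-w}\rho^\vee$ with both sides unitary and $w$ real forces $1-w=0$.

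For comparison, the paper avoids all of this pole-versus-zero bookkeeping with a single preliminary move: by the functional equation, $\gamma(1/2-s,\St_k(\rho),\rho^\vee,\psi)\gamma(1/2+s,\St_k(\rho^\vee),\rho,\psi^{-1})=1$, so the hypothesis becomes
\[
\lim_{s\to(k+1)/2}\frac{\gamma(s,\St_k(\rho),\rho^\gal,\psi)}{\gamma(s,\St_k(\rho^\vee),\rho,\psi^{-1})}=1.
\]
After expanding both via Lemma~\ref{lem: cuspsupp}, the denominator's $i=k$ term has an unconditional simple pole at $s=(k+1)/2$ (from $L(0,\rho,\rho^\vee)$), while all other cuspidal terms above and below are regular and nonzero there by Lemma~\ref{lem: cusp L}. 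Hence the numerator's $i=k$ term must also have a pole, which says exactly $\rho^\gal\simeq\rho^\vee$. It is the same phenomenon your argument is tracking, but the always-present singularity now sits manifestly in the denominator, so only poles need to be discussed and no zero--pole matching is required.
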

\begin{proof}
Since, as meromorphic functions of $s\in \C$ we have
\[
\gamma(1/2-s,\St_k(\rho),\rho^\vee,\psi)\gamma(1/2+s,\St_k(\rho^\vee),\rho,\psi^{-1})=1,
\]
it follows from the assumption that
\begin{equation}\label{eq: quot1}
\lim_{s\to \frac{k+1}2}\frac{\gamma(s,\St_k(\rho),\rho^\gal,\psi)}{\gamma(s,\St_k(\rho^\vee),\rho,\psi^{-1})}=1.
\end{equation}

By Lemma \ref{lem: cuspsupp} we have
\[
\gamma(s,\St_k(\rho),\rho^\gal,\psi)=\prod_{i=1}^k \gamma(s+\frac{k+1}2-i,\rho,\rho^\gal,\psi)
\] 
and 
\[  
\gamma(s,\St_k(\rho^\vee),\rho,\psi^{-1})=\prod_{i=1}^k \gamma(s+\frac{k+1}2-i,\rho^\vee,\rho,\psi^{-1}).
\]

It follows from Lemma \ref{lem: cusp L} that both $\gamma(s+\frac{k+1}2-i,\rho,\rho^\gal,\psi)$ and $\gamma(s+\frac{k+1}2-i,\rho^\vee,\rho,\psi^{-1})$ are holomorphic and non-zero at $s=\frac{k+1}2$ for all $i=1,\dots,k-1$ and $\gamma(s+\frac{1-k}2,\rho^\vee,\rho,\psi^{-1})$ has a simple pole at $s=\frac{k+1}2$. It therefore follows from \eqref{eq: quot1} that $\gamma(s+\frac{1-k}2,\rho,\rho^\gal,\psi)$ must also have a pole at $s=\frac{k+1}2$. By Lemma \ref{lem: cusp L} we deduce that $\rho^\gal\simeq \rho^\vee$. Since $\St_k(\rho)^\gal\simeq\St_k(\rho^\gal)$ and $\St_k(\rho)^\vee\simeq \St_k(\rho^\vee)$ the lemma follows.
\end{proof}



Let $\clss_{\le m}$ be the class of representations $\tau\in \cup_{k=1}^{m}\Irr_\temp(k)$ that are of one of the following two forms:
\begin{itemize}
\item $\tau\in \Irr_\sqr$ is distinguished or 
\item $\tau=\delta^\gal\times \delta^\vee$ where $\delta\in \Irr_{\usqr}$.
\end{itemize}
It is a class of distinguished tempered representations.

The key lemma allowing us to twist only by tempered representations is the following application of holomorphic continuation. 

\begin{lemma}\label{lem: mer cont}
Let $\pi\in \Irr(n)$ and $m\in \N$. 
\begin{enumerate}
\item\label{part: eps} If $\epsilon(1/2,\pi,\tau,\psi)=1$ for all $\tau\in \clss_{\le m}$ then $\epsilon(1/2,\pi,\tau,\psi)=1$ for every distinguished $\tau\in \cup_{k=1}^m\Irr(k)$. 
\item\label{part: gam} If $\pi\in \Irr_\temp$ and $\gamma(1/2,\pi,\tau,\psi)=1$ for all $\tau\in \clss_{\le m}$ then $\gamma(1/2,\pi,\tau,\psi)=1$ for every distinguished $\tau\in \cup_{k=1}^{m}\Irr_{<\frac12}(k)$.
\end{enumerate}
\end{lemma}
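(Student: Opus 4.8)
The plan is to prove both parts simultaneously by reducing an arbitrary distinguished irreducible $\tau$ to a product of the building blocks in $\clss_{\le m}$, using Proposition \ref{prop: dist std} together with analytic continuation in a deformation parameter. The point is that Proposition \ref{prop: dist std} tells us exactly what a distinguished standard module looks like: a distinguished $\tau\in\cup_{k=1}^m\Irr(k)$ has $\lambda(\tau)\simeq \delta_1^\gal\times\cdots\times\delta_t^\gal\times\tau_1\times\cdots\times\tau_s\times\delta_t^\vee\times\cdots\times\delta_1^\vee$ with $\tau_j\in\Irr_\sqr$ distinguished and $e(\delta_i)\ge 0$. By Lemma \ref{lem: mult}(\ref{part: gamma}) the $\gamma$-factor $\gamma(s,\pi,\lambda(\tau),\psi)$ factors as $\prod_j\gamma(s,\pi,\tau_j,\psi)\cdot\prod_i\gamma(s,\pi,\delta_i^\gal,\psi)\gamma(s,\pi,\delta_i^\vee,\psi)$, and similarly for $\epsilon$ via Lemma \ref{lem: mult}(\ref{part: Leps}). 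The distinguished discrete series $\tau_j$ are already in $\clss_{\le m}$, so the content is in handling each conjugate-dual pair factor $\gamma(1/2,\pi,\delta^\gal\times\delta^\vee,\psi)$ when $e(\delta)>0$, i.e. when $\delta$ is not tempered.

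The key mechanism is deformation: write $\delta=\nu^e\delta_0$ with $\delta_0\in\Irr_\usqr$ and $e=e(\delta)>0$, and consider the family $f(z)=\epsilon(1/2,\pi,\nu^z\delta_0^\gal\times\nu^{-z}\delta_0^\vee,\psi)$ for $z\in\C$. Using property (2) of the $L$ and $\epsilon$ factors, $\epsilon(1/2,\pi,\nu^z\delta_0^\gal\times\nu^{-z}\delta_0^\vee,\psi)=\epsilon(1/2+z,\pi,\delta_0^\gal,\psi)\epsilon(1/2-z,\pi,\delta_0^\vee,\psi)$; this is an entire, nowhere-vanishing function of $z$ (being a finite product of $\epsilon$-factors, each of which is a unit times a power of $q^{-s}$). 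At $z$ purely imaginary, $\nu^z\delta_0^\gal\times\nu^{-z}\delta_0^\vee$ is of the form $\delta'^\gal\times\delta'^\vee$ with $\delta'=\nu^z\delta_0\in\Irr_\usqr$, hence lies in $\clss_{\le m}$, so by hypothesis $f(z)=1$ for all $z\in i\R$. Since $f$ is holomorphic and $i\R$ has an accumulation point, $f\equiv 1$ identically; specializing $z=e$ gives $\epsilon(1/2,\pi,\delta^\gal\times\delta^\vee,\psi)=1$. Combined with the distinguished-discrete-series factors being handled by hypothesis, multiplying over all factors gives part (\ref{part: eps}).

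For part (\ref{part: gam}), I would run the same argument with $\gamma$ in place of $\epsilon$, but here one must be more careful because $\gamma$-factors are meromorphic, not entire, so the deformation function $g(z)=\gamma(1/2+z,\pi,\delta_0^\gal,\psi)\gamma(1/2-z,\pi,\delta_0^\vee,\psi)$ is only meromorphic in $z$ and the hypothesis gives $g(z)=1$ on $i\R$ only at points where no pole interferes. The resolution: the identity theorem for meromorphic functions still forces $g\equiv 1$ once it holds on a set with an accumulation point where $g$ is defined and equal to $1$ — and $\gamma(1/2+z,\pi,\delta_0^\gal,\psi)\gamma(1/2-z,\pi,\delta_0^\vee,\psi)$ for $z\in i\R$ is a product of $\gamma$-factors at points with $\Re=1/2$; since $\pi$ is tempered and $\delta_0$ is unitary square integrable, by Lemma \ref{lem: sqr L} the relevant $L$-factors (controlling poles of $\gamma$) are holomorphic and nonvanishing off the critical line, so $g$ is holomorphic and nonzero near $i\R$ and equals $1$ there, hence $g\equiv 1$. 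Then specializing $z=e(\delta)$ and again invoking $\gamma$-multiplicativity (Lemma \ref{lem: mult}(\ref{part: gamma})) over the factorization of $\lambda(\tau)$ for $\tau\in\Irr_{<\frac12}(k)$ — here the constraint $\abs{e(\delta_i)}<1/2$ in the definition of $\Irr_{<\frac12}$ guarantees we stay in the region where things are well-behaved and the distinguished discrete $\tau_j$ are tempered, hence in $\clss_{\le m}$ — yields $\gamma(1/2,\pi,\tau,\psi)=1$.

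The main obstacle I anticipate is bookkeeping around poles in the $\gamma$-factor case: one must verify that along the deformation path $z\in i\R$ (and in a neighborhood) none of the constituent $\gamma(1/2\pm z,\pi,\delta_0^{\gal/\vee},\psi)$ acquires a pole or zero, so that the restriction of the meromorphic function $g$ to $i\R$ genuinely determines it. This is exactly where temperedness of $\pi$ and unitarity of $\delta_0$ are used, via Lemmas \ref{lem: cusp L} and \ref{lem: sqr L}, to push all poles onto the critical line $\Re(s)=1/2$ of the original variable, i.e. onto $\Re(z)=0$ only in a controlled way; a careful statement is that the only possible poles occur at isolated points of $i\R$, which does not obstruct the identity theorem since $g$ is constantly $1$ on the complement of these points within $i\R$, still an accumulation set. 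The $\epsilon$-case is genuinely routine by comparison since $\epsilon$-factors are units.
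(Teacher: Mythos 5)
Your proof is correct and follows essentially the same route as the paper: decompose the distinguished standard module via Proposition \ref{prop: dist std}, factor the local constant by multiplicativity (Lemma \ref{lem: mult}), and handle each conjugate-dual pair $\delta^\gal\times\delta^\vee$ by deforming $\nu^z\delta_0^\gal\times\nu^{-z}\delta_0^\vee$ over $z\in i\R$ (where the twist lies in $\clss_{\le m}$) and invoking analytic continuation. The paper is slightly more explicit in the $\gamma$-case, restricting to the strip $\abs{\Re(z)}<1/2$ where Lemma \ref{lem: sqr L} guarantees each constituent $\gamma$-factor is holomorphic and non-vanishing, so no appeal to the meromorphic identity theorem or discussion of possible poles on $i\R$ is actually needed; your fallback reasoning there is unnecessary but harmless.
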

\begin{proof}
Let $\delta\in \Irr_\usqr(d)$ where $2d\le m$ and for $s\in\C$ let $\tau_s=\nu^s\delta^\gal\times \nu^{-s}\delta^\vee$. It follows from Lemma \ref{lem: mult} \eqref{part: Leps} that for $\Re(s)\ge 0$ we have
\[
\epsilon(1/2,\pi,\tau_s,\psi)=\epsilon(1/2+s,\pi,\delta^\gal,\psi)\epsilon(1/2-s,\pi,\delta^\vee,\psi).
\]
The right hand side is an entire function of $s\in \C$ and by assumption, it equals $1$ on the imaginary axis. By holomorphic continuation, it is therefore the constant function $1$. 

It follows that 
\[
\epsilon(1/2,\pi,\delta^\gal\times\delta^\vee,\psi)=1, \ \ \ \delta\in \Irr_\sqr, \  e(\delta)\ge 0. 
\]
Assume now in addition that $\pi$ is tempered. Then, by Lemmas \ref{lem: sqr L} and \ref{lem: mult} the function $\gamma(s,\pi,\tau,\psi)$ is holomorphic and non-zero at $s=1/2$ for all $\tau\in\Irr_{<\frac12}$. Furthermore, for $\abs{\Re(s)}<1/2$ we have
 \[
\gamma(1/2,\pi,\tau_s,\psi)=\gamma(1/2+s,\pi,\delta^\gal,\psi)\gamma(1/2-s,\pi,\delta^\vee,\psi)
\]
where, by the same lemmas, each factor on the right hand side is holomorphic for $\abs{\Re(s)}<1/2$. By assumption, the right hand side is identically $1$ for all $s$ in the imaginary axis and therefore, by holomorphic continuation, also whenever $\abs{\Re(s)}<1/2$. 

It follows that 
\[
\gamma(1/2,\pi,\delta^\gal\times\delta^\vee,\psi)=1, \ \ \ \delta\in \Irr_\sqr, \ 1/2>e(\delta)\ge 0.
\]

The two conclusions of the lemma now follow from Lemma \ref{lem: mult} and Proposition \ref{prop: dist std}.
\end{proof}
For discrete series we can now obtain a relative converse theorem with tempered twists.
\begin{theorem}\label{thm qsr}
Let $\delta=St_k(\rho)\in\Irr_\sqr(n)$. Suppose that $k\neq 2$, and that $\gamma(1/2,\delta,\tau,\psi)=1$ for all 
$\tau\in\clss_{\le n-2}$, then $\delta$ is distinguished. If $k=2$, and $\gamma(1/2,\delta,\tau,\psi)=1$ for all 
$\tau\in\clss_{\leq n}$, then $\delta$ is distinguished.
\end{theorem}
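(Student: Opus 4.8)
The plan is to reduce Theorem \ref{thm qsr} to the refined cuspidal converse theorem, Theorem \ref{thm: Ok} (or rather Lemma \ref{lem: HO}, its extension to discrete series), by a two-step strategy. First, I would use the tempered-twist hypothesis together with Lemma \ref{lem: mer cont} to upgrade the assumption: if $\gamma(1/2,\delta,\tau,\psi)=1$ for all $\tau\in\clss_{\le m}$ (with $m=n-2$ if $k\ne 2$ and $m=n$ if $k=2$), then, since $\delta=\St_k(\rho)$ is tempered, part \eqref{part: gam} of Lemma \ref{lem: mer cont} gives $\gamma(1/2,\delta,\tau,\psi)=1$ for \emph{every} distinguished $\tau\in\cup_{j=1}^m\Irr_{<\frac12}(j)$; in particular for all distinguished characters $\tau\in\Irr(1)$ and for all distinguished $\delta'\in\Irr_\usqr(j)$ with $j\le m$.

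Second, I would establish the two conjugate-self-duality facts needed to apply the known discrete series case. Applying the conclusion of the first step to distinguished characters of $E^\times$, Proposition \ref{prop: trivial central} forces $\gamma=1$ (consistency) and shows that the central character $c_\delta$ is trivial on $F^\times$. Next I would use Lemma \ref{lem: is sym}: taking $\tau_s=\nu^s\rho^\gal\times\nu^{-s}\rho^\vee$ with $\delta'=\nu^{-e(\rho)}\rho^\gal$ ranging over $\clss_{\le m}$ when $\Re(s)\in i\R$ shifted appropriately, the identity $\gamma(1/2,\delta,\tau_s,\psi)=\gamma(1/2+s,\St_k(\rho),\rho^\gal,\psi)\gamma(1/2-s,\St_k(\rho),\rho^\vee,\psi)=1$ on the imaginary axis continues meromorphically, and its value as $s\to k/2$ is forced to be $1$; Lemma \ref{lem: is sym} then yields $\rho^\gal\simeq\rho^\vee$ and hence $\delta^\gal\simeq\delta^\vee$. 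One must check here that $\dim(\nu^{-e(\rho)}\rho^\gal\times \nu^{e(\rho)}\rho^\vee)$, i.e.\ $2\dim\rho$, is at most $m$: for $k\ne 2$ this is $2n/k\le n/2\le n-2$ (using $k\ge 3$, $n=k\dim\rho>3$), and for $k=2$ it is $n$, which is exactly why the weaker bound $m=n$ is needed in that case.

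Third, with $\delta\in\Irr_\sqr(n)$, $\delta^\gal\simeq\delta^\vee$, ${c_\delta}_{|F^\times}=\mathbf{1}$, and $\gamma(1/2,\delta,\delta',\psi)=1$ for all distinguished $\delta'\in\Irr_\usqr(j)$, $j\le n-2$, I would invoke Lemma \ref{lem: HO}: since $\delta\in\Irr_\sqr(n)\setminus\Irr_\cusp(n)$ (as $k\ge 2$) and $n>3$, if $\delta$ were not distinguished there would be $k'\le n-2$ and a distinguished $\delta'\in\Irr_\sqr(k')$ with $\gamma(1/2,\delta,\delta',\psi)=-1$. Because ${c_\delta}_{|F^\times}=\mathbf1$ and $\delta'$ is distinguished hence unitary (Lemma \ref{lem: flick}\eqref{part: sym} and Proposition \ref{prop: dist}), $\delta'\in\Irr_\usqr(k')$, contradicting our upgraded hypothesis. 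Hence $\delta$ is distinguished.

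The main obstacle I anticipate is the bookkeeping in the second step: one must verify that the representations $\nu^s\rho^\gal\times\nu^{-s}\rho^\vee$ arising in the holomorphic-continuation argument of Lemma \ref{lem: is sym} genuinely lie in the class $\clss_{\le m}$ for $s$ on the imaginary axis (so that the hypothesis applies there), and correctly track the $\nu$-twists so that the limit $s\to k/2$ reproduces exactly the expression in Lemma \ref{lem: is sym}; this is also where the dichotomy $k\ne 2$ versus $k=2$ enters through the size constraint $2\dim\rho\le m$. The rest is a routine assembly of Lemmas \ref{lem: HO}, \ref{lem: is sym}, \ref{lem: mer cont} and Proposition \ref{prop: trivial central}.
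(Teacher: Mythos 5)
Your proposal follows essentially the same route as the paper's proof: apply Proposition \ref{prop: trivial central} to see that the central character is trivial on $F^\times$ (hence $\delta\in\Irr_\usqr$), use the twists $\tau_s=\nu^s\rho^\gal\times\nu^{-s}\rho^\vee$ for $s\in i\R$ together with meromorphic continuation and Lemma \ref{lem: is sym} to get $\delta^\gal\simeq\delta^\vee$, and then conclude via Lemma \ref{lem: HO}. A few points need tidying.

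First, there is a circularity in the way you organize steps one and two: Lemma \ref{lem: mer cont}\eqref{part: gam} has $\pi\in\Irr_\temp$ as a hypothesis, but temperedness of $\delta$ is only obtained \emph{after} Proposition \ref{prop: trivial central}, which you place in step two. The fix is trivial and in fact makes Lemma \ref{lem: mer cont} unnecessary for step two: every distinguished character of $E^\times$ already lies in $\clss_{\le m}$ (it is a distinguished element of $\Irr_\sqr(1)$), so the hypothesis directly feeds Proposition \ref{prop: trivial central}; only afterwards is $\delta$ known to be tempered. Similarly, the distinguished $\delta'\in\Irr_\sqr(k')$ furnished by Lemma \ref{lem: HO} satisfies $(\delta')^\gal\simeq(\delta')^\vee$ (Lemma \ref{lem: flick}\eqref{part: sym}), hence $e(\delta')=0$, and is thus already a member of $\clss_{\le n-2}$; no upgrading through Lemma \ref{lem: mer cont} is needed there either. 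The paper uses the $s\in i\R$ to $s\in\C$ continuation directly inside the proof without going through Lemma \ref{lem: mer cont}.

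Second, the size check $2\dim\rho\le n-2$ for $k\ge 3$ is correct as a conclusion but your chain $2n/k\le n/2\le n-2$ is false for $k=3$ (the first inequality requires $k\ge 4$). For $k=3$ one must instead observe that $n=3\dim\rho>3$ forces $\dim\rho\ge 2$, whence $2\dim\rho\le 3\dim\rho-2=n-2$; this is exactly the role of the hypothesis $n>3$ and is why the paper singles out this case.

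Third, the theorem also covers $k=1$ (cuspidal $\delta$), which your step three excludes by assuming $k\ge 2$ so that Lemma \ref{lem: HO} applies. For $k=1$ the argument is different: after Proposition \ref{prop: trivial central} gives $\rho\in\Irr_\ucusp$, one applies Lemma \ref{lem: mer cont}\eqref{part: gam} to pass from $\clss_{\le n-2}$ to distinguished $\tau\in\Irr_\ugen(n-2)\subseteq\Irr_{<\frac12}(n-2)$ and then invokes Theorem \ref{thm: Ok} directly (this is where Lemma \ref{lem: mer cont} genuinely earns its keep). You allude to this at the outset but do not carry it out.
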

\begin{proof}
Write $\delta=\St_k(\rho)$ where $\rho\in\Irr_\cusp$. Notice that according to Proposition \ref{prop: trivial central}, the central 
character of $\delta$ is trivial on $F^\times$, so in fact $\delta\in\Irr_\usqr(n)$, i.e. $\rho\in\Irr_\ucusp$. If $k=1$ then the statement is that of Theorem \ref{thm: Ok}. 
Assume that $k\ge 3$. As $n>3$, this implies that if $k=3$, then $\rho$ is a representation of $G_m$ for some $m\geq 2$. 
In particular $\nu^s\rho^\gal\times\nu^{-s}\rho^\vee$ is a representation of $G_{n-(k-2)m}$ for any $s$, and $n-(k-2)m\leq n-2$.
 Hence by assumption, we have
\[
\gamma(1/2,\delta,\nu^s\rho^\gal\times\nu^{-s}\rho^\vee,\psi)=1, \ \ \ s\in i\R
\]
while, by Lemma \ref{lem: sqr L},
\[
\gamma(1/2,\delta,\nu^s\rho^\gal\times\nu^{-s}\rho^\vee,\psi)=\gamma(1/2+s,\delta,\rho^\gal,\psi)\gamma(1/2-s,\delta,\rho^\vee,\psi), \ \ \ s\in i\R.
\]
Since the right hand side is a product of two meromorphic functions for $s\in \C$ it follows by meromorphic continuation that
\[
\gamma(1/2+s,\delta,\rho^\gal,\psi)\gamma(1/2-s,\delta,\rho^\vee,\psi)=1
\]
as meromorphic functions of $s\in \C$ and in particular that 
\[
\lim_{s\to k/2}\gamma(1/2+s,\delta,\rho^\gal,\psi)\gamma(1/2-s,\delta,\rho^\vee,\psi)=1.
\]
It therefore follows from Lemma \ref{lem: is sym} that $\delta^\sigma\simeq\delta^\vee$. The theorem now follows from Lemma \ref{lem: HO}. 
The case $k=2$ is in fact the same, but in this case $\nu^s\rho^\gal\times\nu^{-s}\rho^\vee$ is also a representation of 
$G_n$, hence we are forced to allow twists by $\Gamma_{\leq n}$.
\end{proof}
\begin{example}\label{optimal}
For every $n\ge 4$ we now exhibit a non-distinguished representation $\tau\in \Irr_\temp(n)$ so that $\epsilon(1/2,\tau,\pi,\psi)=1$ for all distinguished $\pi\in \Irr$ and $\gamma(1/2,\tau,\pi,\psi)=1$ for all distinguished $\pi\in\Irr_{<\frac12}$. Let 
\[
\tau=\St_3(\eta)\times \eta\times \overbrace{\trivchar\times \cdots\times \trivchar}\limits^{n-4}.
\]
Then, $\tau$ is not distinguished e.g. by Proposition \ref{prop: dist std} and for any $\pi\in \Irr$ we have 
\[
\epsilon(1/2,\tau,\pi,\psi)=\epsilon(3/2,\eta,\pi,\psi)\epsilon(-1/2,\eta,\pi,\psi)\epsilon(1/2,\eta,\pi,\psi)^2\epsilon(1/2,\trivchar,\pi,\psi)^{n-4}.
\]
If $\pi$ is distinguished it follows from part \eqref{part: gamsqr}  of Lemma \ref{lem: L prop} that $\epsilon(3/2,\eta,\pi,\psi)\epsilon(-1/2,\eta,\pi,\psi)=1$, from part \eqref{part: epsqr} that $\epsilon(1/2,\eta,\pi,\psi)^2=1$ and from Theorem \ref{thm: eps triv} that $\epsilon(1/2,\trivchar,\pi,\psi)=1$.

Since $\tau$ is tempered, it further follows from Lemmas \ref{lem: sqr L} and \ref{lem: mult} that $L(s,\tau,\pi)$ is holomorphic at $s=1/2$ whenever $\pi\in\Irr_{<\frac12}$ and therefore, that $\gamma(1/2,\tau,\pi,\psi)=1$ by Lemma
\ref{lem: L prop} \eqref{part: gamhalf}.
\end{example}

\section{Triviality of root numbers-archimedean and global complements}\label{sec: complements}
Let $K$ be either $\R$ or $\C$ and in this section only,  let $G_n=\GL_n(K)$. 
By a representation of $G_n$ we mean an admissible, smooth Fr\'{e}chet representation of moderate growth (see \cite[11.6.8]{MR1170566}).
Let $\Irr(n)$ be the set of irreducible representations of $G_n$ and $\Irr=\cup_{n=1}^\infty \Irr(n)$. Fix a non-trivial character $\psi$ and for $\pi,\,\tau\in\Irr$ let 
$L(s,\pi,\tau)$ and $\epsilon(s,\pi,\tau)$ be the archimedean $L$ and $\epsilon$-factors defined via Langlands parameterization in terms of representations of the Weil group (see \cite{MR816396}).
The $\gamma$-factor $\gamma(s,\pi,\tau,\psi)$ is defined by \eqref{eq: gamma} where $\pi^\vee$ is the contragredient of $\pi$. 

Recall that the subset $\Irr_\sqr$ of essentially square integrable representation in $\Irr$ is contained in $\Irr(1)$ if $E=\C$ and in $\Irr(1)\cup \Irr(2)$ if $E=\R$. As in the $p$-adic case, let $\nu=\abs{\det}$ denoted the absolute value of the determinant on $G_n$ for any $n$ and for $\pi_1,\dots,\pi_k\in\Irr$ let $\pi_1\times \cdots \times \pi_k$ denote the associated normalized parabolic induction.
For $\delta\in \Irr_\sqr$ let $e=e(\delta)\in\R$ be such that $\nu^{-e}\delta$ is unitary. By the Langlands classification for any $\pi\in\Irr$ there exist $\delta_1,\dots,\delta_k\in\Irr_\sqr$ such that $e(\delta_1)\ge \cdots\ge e(\delta_k)$ and $\pi$ is the unique irreducible quotient of the standard module
\[
\lambda(\pi)=\delta_1\times\cdots\times \delta_k.
\]
If furthermore, $\tau\in\Irr$ has associated standard module $\lambda(\tau)=\delta_1'\times\cdots \times\delta_l'$ with $\delta_1',\dots,\delta_l'\in\Irr_\sqr$ then it essentially follows from the definitions that
\begin{equation}\label{eq: red to sqr}
L(s,\pi,\tau)=\prod_{i=1}^k\prod_{j=1}^l L(s,\delta_i,\delta_j') \ \ \ \text{and} \ \ \ \epsilon(s,\pi,\tau,\psi)=\prod_{i=1}^k\prod_{j=1}^l \epsilon(s,\delta_i,\delta_j',\psi).
\end{equation}

\subsection{Triviality of the archimedean root number for distinguished representations}\label{sec: arch}
Assume here that $E=\C$ and let $z^\gal=\bar z$, $z\in \C$ be complex conjugation and $\abs{x+iy}=x^2+y^2$, $x,\,y\in\R$ be the normalized absolute value.
Let $\psi(z)=e^{-2\pi(z-\bar z)}$. 
For a character $\xi$ of $\C^\times$ we further denote by
$L(s,\xi)$, $\epsilon(s,\xi,\psi)$ the local Tate $L$ and $\epsilon$ factors and let 
\[
\gamma(s,\xi,\psi)=\frac{\epsilon(s,\xi,\psi) L(1-s,\xi^{-1})}{L(s,\xi)}.
\]
We recall the functional equation
\begin{equation}\label{eq: fe tate}
\gamma(s,\xi,\psi)\gamma(1-s,\xi^{-1},\psi^{-1})=1=\epsilon(s,\xi,\psi)\epsilon(1-s,\xi^{-1},\psi^{-1}).
\end{equation}

A representation $\pi$ of $G_n$ is distinguished if there exists a continuous linear form $0\ne \ell\in\Hom_{\GL_n(\R)}(\pi,\trivchar)$.

Every character of $\C^\times$ is of the form
\[
\xi_{u,m}(z)=\abs{z}^u (\frac z{z^\gal})^m,\ \ \ u\in\C,\,2m\in\Z.
\]
The character $\xi_{u,m}$ is distinguished if and only if $u=0$ and $m\in \Z$. 

Let $\rho_k=\xi_{0,k}$, $ k\in \Z$ parameterise all distinguished characters.
It follows from the explicit computations of Tate's thesis that
\begin{equation}\label{eq: Tate comp}
\epsilon(1/2,\rho_k,\psi)=\gamma(1/2,\rho_k,\psi)=1,\  \ \ k\in \Z.
\end{equation}

As in Proposition \ref{prop: dist std}, based on the Langlands classification, the following is immediate from \cite[Theorem 1.2]{MR3404670}.
\begin{proposition}\label{prop: alex}
Let $\lambda$ be a distinguished standard module of $G_n$. Then there exist characters $\xi_1,\dots,\xi_t,\chi_1,\dots,\chi_s$ of $\C^\times$ with $e(\xi_1)\ge \cdots \ge e(\xi_t)\ge 0$ and $\chi_i$ distinguished for $i=1,\dots,s$ such that
\[
\lambda\simeq \xi_1^\gal \times\cdots \times \xi_t^\gal\times \chi_1 \times\cdots\times \chi_s\times  \xi_t^{-1}\times \cdots \times \xi_1^{-1}.
\] \qed
\end{proposition}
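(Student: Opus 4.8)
The plan is to run the proof of Proposition~\ref{prop: dist std} essentially verbatim, with Gurevich's classification replaced by its archimedean counterpart \cite[Theorem 1.2]{MR3404670}. Since we are in the case $E=\C$, every essentially square integrable representation in $\Irr$ is a character of $\C^\times$, so a standard module of $G_n$ is just a product of $n$ characters of $\C^\times$ whose exponents $e(\cdot)$ are weakly decreasing; thus the claim is purely a statement about how the constituent characters of $\lambda$ pair up under $\gal$ and inversion, and no Langlands quotient subtleties intervene.

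First I would invoke \cite[Theorem 1.2]{MR3404670}, which—exactly as \cite[Proposition 3.4]{MR3421655} does in the $p$-adic case—supplies, for the distinguished standard module $\lambda$, a standard-form realization $\lambda\simeq\xi_1'\times\cdots\times\xi_k'$ together with an involution $w\in S_k$ such that $(\xi_i')^\gal\simeq(\xi_{w(i)}')^{-1}$ for all $i$ (using that the smooth dual of a character of $\C^\times$ is its inverse) and $\xi_i'$ is distinguished whenever $w(i)=i$. Writing $k=s+2t$, I let $\{\chi_1,\dots,\chi_s\}$ be the family $\{\xi_i':w(i)=i\}$, which are distinguished; for each of the remaining $t$ two-element orbits $\{j,w(j)\}$ I pick the index $j$ with $e(\xi_j')\ge 0$ (and $j<w(j)$ in case $e(\xi_j')=0$) and set $\xi:=(\xi_j')^\gal$. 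Since $\gal$ is an involution on characters that preserves the exponent $e$ (indeed $\xi_{u,m}^\gal=\xi_{u,-m}$, so $e(\xi_{u,m}^\gal)=\Re(u)=e(\xi_{u,m})$), one gets $\{\xi_j',\xi_{w(j)}'\}=\{\xi^\gal,\xi^{-1}\}$ with $e(\xi)\ge 0$. Reordering the $t$ orbits so that the chosen representatives satisfy $e(\xi_1)\ge\cdots\ge e(\xi_t)\ge 0$ then produces characters $\xi_1,\dots,\xi_t$ and $\chi_1,\dots,\chi_s$ with the asserted properties.

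It remains to check that the rearranged product
\[
\xi_1^\gal\times\cdots\times\xi_t^\gal\times\chi_1\times\cdots\times\chi_s\times\xi_t^{-1}\times\cdots\times\xi_1^{-1}
\]
is again a standard-form realization of $\lambda$. The sequence of exponents here reads $e(\xi_1)\ge\cdots\ge e(\xi_t)\ge 0=\cdots=0\ge -e(\xi_t)\ge\cdots\ge -e(\xi_1)$, hence is weakly decreasing, so this product is a standard module; and because $\delta_i\prec\delta_j$ forces $e(\delta_i)<e(\delta_j)$, any weakly $e$-decreasing ordering is automatically a standard form, so by the independence of the standard module on the choice of standard-form realization this product coincides with $\lambda$, which is the desired isomorphism. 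I do not expect a genuine obstacle here: all the real content is carried by Kemarsky's classification, and—unlike the converse direction of Proposition~\ref{prop: dist std}, which is not needed for this statement—no closed-orbit or geometric-lemma input is required; the only care needed is to quote \cite[Theorem 1.2]{MR3404670} in the involution form above and to keep track of the fact that $\gal$ fixes the exponent $e$.
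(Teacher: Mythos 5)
Your proposal is correct and matches exactly what the paper intends: the paper dispenses with a written proof, saying the result is ``immediate from \cite[Theorem 1.2]{MR3404670}'' by arguing ``as in Proposition \ref{prop: dist std},'' and your argument is precisely that argument transported to the archimedean setting, with Kemarsky's involution-form classification replacing Gurevich's, the computation $\xi_{u,m}^\gal=\xi_{u,-m}$ (hence $e(\xi^\gal)=e(\xi)$) and $\xi^\vee=\xi^{-1}$ for characters of $\C^\times$ supplying the bookkeeping, and the observation that a weakly $e$-decreasing ordering is a standard-form realization of $\lambda$ finishing it off.
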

By the uniqueness of the Langlands data, as an immediate consequence we have (for generic representations this is \cite[Theorem 1.4]{MR3404670})
\begin{corollary}\label{cor: alex}
Let $\pi\in\Irr$ be distinguished then $\pi^\gal\simeq\pi^\vee$. \qed
\end{corollary}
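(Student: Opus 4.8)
The plan is to deduce Corollary~\ref{cor: alex} directly from Proposition~\ref{prop: alex} together with the uniqueness of the Langlands data, exactly as in the $p$-adic analogue (Proposition~\ref{prop: dist std} combined with the fact that $\lambda(\pi^\vee)=\lambda(\pi)^\iota$). First I would recall that since $\pi\in\Irr$ is distinguished, its standard module $\lambda=\lambda(\pi)$ is also distinguished (the distinction of an irreducible quotient forces distinction of the standard module, by the closed-orbit argument already invoked in Section~\ref{sec: complements}), so Proposition~\ref{prop: alex} applies and gives characters $\xi_1,\dots,\xi_t,\chi_1,\dots,\chi_s$ of $\C^\times$ with $e(\xi_1)\ge\cdots\ge e(\xi_t)\ge 0$ and each $\chi_i$ distinguished, such that
\[
\lambda\simeq \xi_1^\gal\times\cdots\times\xi_t^\gal\times\chi_1\times\cdots\times\chi_s\times\xi_t^{-1}\times\cdots\times\xi_1^{-1}.
\]

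Next I would apply the Galois twist. Since for a character $\xi$ of $\C^\times$ one has $\xi^\vee=\xi^{-1}$ and $(\xi^\gal)^\vee=(\xi^\vee)^\gal=(\xi^{-1})^\gal$, and since taking smooth duals reverses the order of a parabolically induced product, the standard module of $\pi^\vee$ is $\lambda(\pi^\vee)=\lambda^\iota$, which reading off the factors above is
\[
\lambda(\pi^\vee)\simeq \xi_1\times\cdots\times\xi_t\times\chi_1^{-1}\times\cdots\times\chi_s^{-1}\times(\xi_t^\gal)^{-1}\times\cdots\times(\xi_1^\gal)^{-1}.
\]
On the other hand, applying $\gal$ to the displayed expression for $\lambda$ and using that each $\chi_i$ is distinguished — hence $\chi_i^\gal\simeq\chi_i^\vee=\chi_i^{-1}$ by Lemma~\ref{lem: flick}\eqref{part: sym} applied to characters, or simply because distinguished characters of $\C^\times$ are of the form $\rho_k$ with $\rho_k^\gal=\rho_k^{-1}$ — gives $\lambda(\pi)^\gal\simeq \xi_1\times\cdots\times\xi_t\times\chi_1^{-1}\times\cdots\times\chi_s^{-1}\times(\xi_t^\gal)^{-1}\times\cdots\times(\xi_1^\gal)^{-1}$, which is exactly $\lambda(\pi^\vee)$. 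Finally, since $e((\xi_i^\gal)^{-1})=-e(\xi_i)\le 0\le e(\xi_i)$, the exponents in this common expression are already weakly decreasing, so it is a standard module in standard form; by the uniqueness of the Langlands classification, $\lambda(\pi^\gal)=\lambda(\pi)^\gal\simeq\lambda(\pi^\vee)$ forces $\pi^\gal\simeq\pi^\vee$.

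The only mildly delicate point, which I would state carefully rather than belabor, is the bookkeeping of the ordering: one must check that the string of exponents $e(\xi_1)\ge\cdots\ge e(\xi_t)\ge 0$ followed by the zeros $e(\chi_i)=0$ followed by $-e(\xi_t)\ge\cdots\ge -e(\xi_1)$ is indeed weakly decreasing, so that no reordering is needed before invoking uniqueness of the Langlands data; this is immediate. I expect the main (minor) obstacle to be simply ensuring that taking the Galois conjugate commutes with parabolic induction and the Langlands quotient construction in the archimedean setting and that $\Irr_\sqr$ here consists only of characters and $\GL_2(\R)$-discrete series, but since Proposition~\ref{prop: alex} already packages everything in terms of characters of $\C^\times$, there is nothing beyond the character-level identities $\rho_k^\gal=\rho_k^{-1}$ and $(\xi^\gal)^\vee=(\xi^{-1})^\gal$ to verify.
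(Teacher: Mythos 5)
Your proof is correct and is precisely the unpacking of what the paper asserts with its one-line justification ``By the uniqueness of the Langlands data, as an immediate consequence [of Proposition~\ref{prop: alex}]'': compare $\lambda(\pi)^\gal$ with $\lambda(\pi^\vee)=\lambda(\pi)^\inv$ factor by factor using $\chi_i^\gal=\chi_i^{-1}$ and $(\xi^\gal)^{-1}=(\xi^{-1})^\gal$, check the exponent string is weakly decreasing, and conclude by uniqueness. (The only tiny slip is that $\lambda(\pi^\vee)$ produces the middle block in reversed order $\chi_s^{-1}\times\cdots\times\chi_1^{-1}$; this is harmless since all $\chi_i$ have $e=0$ and the standard module is independent of the ordering among factors of equal exponent.)
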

We can now formulate the triviality of the local root number for distinguished representations.
\begin{theorem}\label{thm: arch rtnmb}
Let $\pi,\,\tau\in\Irr$ be distinguished. Then 
$\epsilon(1/2,\pi,\tau,\psi)=1$.
\end{theorem}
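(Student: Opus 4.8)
The plan is to mirror the structure of the proof of Theorem~\ref{thm: eps triv} in the non-archimedean case, replacing its ingredients by their archimedean counterparts. Since $\epsilon$-factors are defined multiplicatively on standard modules via \eqref{eq: red to sqr}, and since by Proposition~\ref{prop: alex} any distinguished standard module decomposes as a product of distinguished characters together with "conjugate-dual pairs" $\xi^\gal\times\xi^{-1}$, it suffices to check triviality of $\epsilon(1/2,\cdot,\cdot,\psi)$ on the three types of building blocks: a pair of distinguished characters; a distinguished character against a pair $\xi^\gal\times\xi^{-1}$; and a pair $\xi_1^\gal\times\xi_1^{-1}$ against a pair $\xi_2^\gal\times\xi_2^{-1}$.

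First I would treat the pair of distinguished characters. Writing $\rho_j=\xi_{0,j}$ as in the excerpt, one has that the Rankin--Selberg $\epsilon$-factor of two characters of $\C^\times$ is, up to the usual identification, the Tate $\epsilon$-factor of their product, so $\epsilon(1/2,\rho_j,\rho_k,\psi)=\epsilon(1/2,\rho_{j+k},\psi)$, and this equals $1$ by \eqref{eq: Tate comp} since $\rho_{j+k}$ is again distinguished. Next, for the mixed and the doubled conjugate-dual pairs, the key observation is the same cancellation used in Lemma~\ref{lem: conj dual}: by \eqref{eq: Leps} (the archimedean analogue being immediate from the Weil-group description of the factors) together with the functional equation \eqref{eq: fe tate}, one gets $\epsilon(1/2,\pi,\xi^\gal,\psi)=\epsilon(1/2,\pi^\inv,\xi,\psi^{-1})=\epsilon(1/2,\pi,\xi^\vee,\psi)^{-1}$ whenever $\pi^\gal\simeq\pi^\inv$, so that $\epsilon(1/2,\pi,\xi^\gal\times\xi^{-1},\psi)=1$. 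Applying this with $\pi$ a distinguished character (hence $\pi^\gal\simeq\pi^\vee=\pi^{-1}$, which holds since $\pi$ is distinguished, or directly by Corollary~\ref{cor: alex}) disposes of the mixed block, and applying it with $\pi=\xi_1^\gal\times\xi_1^{-1}$ — which also satisfies $\pi^\gal\simeq\pi^\vee$ — disposes of the doubled block.

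Assembling these via the multiplicativity \eqref{eq: red to sqr} applied to $\lambda(\pi)$ and $\lambda(\tau)$ (legitimate because, as in the $p$-adic case, a distinguished $\pi\in\Irr$ has distinguished standard module $\lambda(\pi)$, and $\epsilon(1/2,\pi,\tau,\psi)=\epsilon(1/2,\lambda(\pi),\lambda(\tau),\psi)$ by definition) then yields $\epsilon(1/2,\pi,\tau,\psi)=1$. The only point that needs a little care — and the step I expect to be the main obstacle — is verifying that the two elementary identities I am invoking for the archimedean Rankin--Selberg factors, namely the Galois-equivariance $\epsilon(s,\pi,\tau,\psi)=\epsilon(s,\pi^\gal,\tau^\gal,\psi^\gal)$ and the $\epsilon(s,\cdot)\epsilon(1-s,\cdot^\inv,\cdot^\inv,\psi^{-1})=1$ relation, together with the reduction of a Rankin--Selberg $\epsilon$-factor of characters to a Tate $\epsilon$-factor, indeed hold in the archimedean setting; all of these are standard consequences of the Weil-group/Artin-factor formalism of \cite{MR816396} and of Tate's local constants, but one should cite them explicitly rather than merely assert them. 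Modulo that bookkeeping the argument is a direct transcription of the non-archimedean proof, and I would present it as such, perhaps recording the conjugate-dual cancellation as a one-line archimedean analogue of Lemma~\ref{lem: conj dual} before giving the three-case reduction.
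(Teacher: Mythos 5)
Your argument is essentially the same as the paper's: both reduce via the multiplicativity \eqref{eq: red to sqr} and Kemarsky's classification (Proposition~\ref{prop: alex}), use \eqref{eq: Tate comp} for pairs of distinguished characters, and obtain the remaining cancellations from Galois-equivariance plus a functional equation. The one organizational difference — and a small simplification you could adopt — is that the paper pushes the reduction all the way to single characters of $\C^\times$ (possible since $E=\C$ forces $\Irr_\sqr\subset\Irr(1)$), so that the entire theorem rests on the single identity $\epsilon(1/2,\xi^\gal,\psi)\,\epsilon(1/2,\xi^{-1},\psi)=1$, which follows immediately from $\psi^\gal=\psi^{-1}$ and the Tate functional equation \eqref{eq: fe tate}; this sidesteps the point you flag as needing care, namely verifying the archimedean Rankin--Selberg analogues of \eqref{eq: Leps} and the $\epsilon\cdot\epsilon^\inv=1$ relation, since once you are at the character level only Tate's local constants enter.
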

\begin{proof}
It follows from \eqref{eq: red to sqr}, \eqref{eq: Tate comp} and Proposition \ref{prop: alex} that it is enough to show that
\[
\epsilon(1/2,\xi^\gal,\psi)\epsilon(1/2,\xi^{-1},\psi)=1
\]
for all characters $\xi$ of $\C^\times$. Since $\psi^\gal=\psi^{-1}$ we have $\epsilon(1/2,\xi^\gal,\psi)=\epsilon(1/2,\xi,\psi^{-1})$ and the required identity follows from \eqref{eq: fe tate}.
\end{proof}

Let $\Gamma_\C(s)=2(2\pi)^{-s} \Gamma(s)$ where $\Gamma$ is the standard gamma function. It is a meromorphic function of $\C$ that is nowhere vanishing, has simple poles at $s\in\Z_{\le 0}$ and is holomorphic everywhere else.
Recall further from Tate's thesis that
\[
L(s,\xi_{u,m})=\Gamma_\C(s+u+\abs{m}).
\]

In particular, we have
\begin{itemize}
\item $L(s,\rho_k)$ is holomorphic at $s=1/2$ for all $k\in \Z$;
\item $L(s,\xi_{u,m})$ has a (simple) pole at $s=1/2$ if and only if $u+1/2+\abs{m}\in\Z_{\le 0}$.
\end{itemize}

As in the $p$-adic case we consider the following classes of representations in $\Irr$.
The tempered representations
\[
\Irr_\temp=\{\xi_1 \times\cdots\times \xi_k: \xi_i\in\Irr(1), \,e(\xi_i)=0,i=1,\dots,k\};
\]
and
\[
\Irr_{<\frac12}=\{\xi_1 \times\cdots\times \xi_k: \xi_i\in\Irr(1), \,\abs{e(\xi_i)}<1/2,i=1,\dots,k\}
\]
which is a set of generic irreducible representations containing all unitary, generic ones.
Based on the above, the following is obtained, as in Theorem \ref{thm: gamtriv}. We omit the proof.
\begin{theorem}
Let $\pi,\tau\in \Irr$ be distinguished representations. If $\pi\in\Irr_{<\frac12}$ and $\tau\in\Irr_\temp$, 
then $$\gamma(1/2,\pi,\tau,\psi)=1.$$
\end{theorem}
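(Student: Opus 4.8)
The plan is to mirror exactly the argument of Theorem \ref{thm: gamtriv} in the archimedean setting, since all the ingredients have already been assembled just before the statement. First I would reduce, via the Langlands classification and \eqref{eq: red to sqr}, to the statement that $L(s,\pi,\tau)$ is holomorphic at $s=1/2$; this reduction is essentially bookkeeping once we know the $L$-factor is a product of Tate $L$-factors attached to the Langlands data of $\pi$ and $\tau$. Concretely, $L(s,\pi,\tau)=\prod_{i,j}L(s,\delta_i,\delta_j')$ where $\delta_i$ and $\delta_j'$ are characters of $\C^\times$ (since $E=\C$), and $L(s,\delta_i,\delta_j')=L(s,\delta_i\delta_j')=\Gamma_\C(s+e(\delta_i)+e(\delta_j')+|m|)$ for the appropriate $m$.

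Next I would show each such factor is holomorphic at $s=1/2$ using the hypotheses $\pi\in\Irr_{<\frac12}$ and $\tau\in\Irr_\temp$: the latter forces $e(\delta_j')=0$, and the former forces $|e(\delta_i)|<1/2$, so the real part of the shift at $s=1/2$ is $1/2+e(\delta_i)+e(\delta_j')>0$, which by the listed pole structure of $\Gamma_\C$ (poles only at non-positive integers) means the factor is holomorphic there. Hence $L(s,\pi,\tau)$ is holomorphic at $s=1/2$. Then, by Corollary \ref{cor: alex} both $\pi$ and $\tau$ satisfy $\pi^\gal\simeq\pi^\vee$ and $\tau^\gal\simeq\tau^\vee$, so the archimedean analogue of Lemma \ref{lem: L prop} \eqref{part: gamhalf} applies: since $L(s,\pi,\tau)=L(s,\pi^\gal,\tau^\gal)=L(s,\pi^\vee,\tau^\vee)$ and this is holomorphic at $1/2$, the functional equation \eqref{eq: gamma} gives $\gamma(1/2,\pi,\tau,\psi)=\epsilon(1/2,\pi,\tau,\psi)$. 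Finally, $\epsilon(1/2,\pi,\tau,\psi)=1$ by Theorem \ref{thm: arch rtnmb}, and the result follows.

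Since the paper explicitly says ``We omit the proof'', I would keep this to a short paragraph pointing to the three inputs: the factorization \eqref{eq: red to sqr} combined with the explicit $\Gamma_\C$ pole structure to get holomorphy of $L(s,\pi,\tau)$ at $s=1/2$; Corollary \ref{cor: alex} plus the archimedean version of Lemma \ref{lem: L prop} to reduce $\gamma(1/2)$ to $\epsilon(1/2)$; and Theorem \ref{thm: arch rtnmb} for the vanishing of the root number. The main (and only real) obstacle is making sure the archimedean analogue of Lemma \ref{lem: L prop} is actually available — i.e. that $L(s,\pi,\tau)=L(s,\pi^\vee,\tau^\vee)$ archimedeanly — but this is immediate from $L(s,\xi)=\Gamma_\C(s+u+|m|)$ being invariant under $\xi_{u,m}\mapsto \xi_{u,m}^{-1}=\xi_{-u,-m}$ when $u=\gal$-conjugation is applied together with duality, exactly as in the $p$-adic case, so there is no genuine difficulty. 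Everything else is the same formal manipulation as in the proof of Theorem \ref{thm: gamtriv}.
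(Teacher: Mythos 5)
Your proposal is correct and is exactly the argument the paper intends when it says ``obtained, as in Theorem \ref{thm: gamtriv}. We omit the proof'': reduce via \eqref{eq: red to sqr} to a product of Tate factors, use the explicit formula $L(s,\xi_{u,m})=\Gamma_\C(s+u+\abs{m})$ together with the hypotheses $e(\delta_j')=0$ and $\abs{e(\delta_i)}<1/2$ to see that $L(s,\pi,\tau)$ is holomorphic and nonvanishing at $s=1/2$, invoke Corollary \ref{cor: alex} and the identity $L(s,\pi,\tau)=L(s,\pi^\gal,\tau^\gal)$ to get $\gamma(1/2)=\epsilon(1/2)$, and then apply Theorem \ref{thm: arch rtnmb}. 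The only cosmetic slip is writing $e(\delta_i)+e(\delta_j')$ inside $\Gamma_\C$ where strictly one should write $u_i+u_j'$ (the $e$'s are the real parts), but since the pole locations of $\Gamma_\C$ lie on the real axis this does not affect the holomorphy conclusion.
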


\subsection{The split case} Let $F$ be a local field and $E=F\oplus F$. Let $(x,y)^\gal=(y,x)$, $x,\,y\in F$. Fix a non-trivial character $\psi_0$ of $F$ and let $\psi(x,y)=\psi_0(x-y)$, $x,\,y\in F$. Set here, $G_n=\GL_n(E)=\GL_n(F)\times \GL_n(F)$ and let $\Irr(n)$ be the set of irreducible representations of $G_n$ and $\Irr=\cup_{n=1}^\infty \Irr(n)$. A representation $\pi \in \Irr$ is of the form $\pi=\pi_1\otimes\pi_2$ where $\pi_i$ are irreducible representations of $\GL_n(F)$, $i=1,2$. We say that $\pi$ is distinguished if $\pi_2\simeq \pi_1^\vee$.

For $\pi=\pi_1\otimes \pi_2,\,\tau=\tau_1\otimes \tau_2\in \Irr$ the local Rankin-Selberg $\epsilon$-factor is defined by
\[
\epsilon(s,\pi,\tau,\psi)=\epsilon(s,\pi_1,\tau_1,\psi_0)\epsilon(s,\pi_2,\tau_2,\psi_0^{-1}).
\]
As an immediate consequence of the standard functional equation for $\epsilon$-factors (Lemma \ref{lem: L prop} \eqref{part: gamsqr}) we have
\begin{lemma}\label{lem: split case}
Let $\pi,\,\tau\in\Irr$ be distinguished. Then 
\[
\epsilon(s,\pi,\tau,\psi)=1.
\]
\end{lemma}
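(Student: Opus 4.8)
The plan is straightforward, since the split case reduces everything to the standard functional equation for Rankin--Selberg $\epsilon$-factors of pairs over the local field $F$. First I would unpack the definitions: write $\pi = \pi_1 \otimes \pi_2$ and $\tau = \tau_1 \otimes \tau_2$, where each $\pi_i, \tau_i$ is an irreducible representation of $\GL_{n_i}(F)$. Distinction of $\pi$ means $\pi_2 \simeq \pi_1^\vee$, and distinction of $\tau$ means $\tau_2 \simeq \tau_1^\vee$. By the definition of the $\epsilon$-factor in the split setting,
\[
\epsilon(s,\pi,\tau,\psi) = \epsilon(s,\pi_1,\tau_1,\psi_0)\,\epsilon(s,\pi_2,\tau_2,\psi_0^{-1}) = \epsilon(s,\pi_1,\tau_1,\psi_0)\,\epsilon(s,\pi_1^\vee,\tau_1^\vee,\psi_0^{-1}).
\]

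Next I would invoke the functional equation recorded earlier (part \eqref{part: gamsqr} of Lemma \ref{lem: L prop}, in its $\epsilon$-form, or more precisely property (3) of the list of basic properties of $\epsilon$-factors): for any pair of representations over a local field one has $\epsilon(s,\sigma,\varsigma,\psi_0)\,\epsilon(1-s,\sigma^\inv,\varsigma^\inv,\psi_0^{-1}) = 1$, and since $\sigma^\inv \simeq \sigma^\vee$ for irreducible $\sigma$, this reads $\epsilon(s,\sigma,\varsigma,\psi_0)\,\epsilon(1-s,\sigma^\vee,\varsigma^\vee,\psi_0^{-1}) = 1$. Applying this with $\sigma = \pi_1$, $\varsigma = \tau_1$ at the point $1-s$ in place of $s$ — equivalently using the relation at a general point — gives $\epsilon(s,\pi_1^\vee,\tau_1^\vee,\psi_0^{-1}) = \epsilon(s,\pi_1,\tau_1,\psi_0)^{-1}$. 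Substituting into the displayed product yields $\epsilon(s,\pi,\tau,\psi) = 1$ identically in $s$.

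There is essentially no obstacle here: the only thing to be careful about is matching the additive characters correctly, namely that the second factor uses $\psi_0^{-1}$ (which is what makes the cancellation work, just as $\psi^\gal = \psi^{-1}$ does in the quadratic-extension case), and that the functional equation is being applied at the right argument so that the two $\epsilon$-factors appearing are genuinely reciprocal. One should also note that the definitions of $L$-, $\epsilon$- and $\gamma$-factors for not-necessarily-generic irreducible representations pass through the Langlands classification, but since the argument is purely formal and uses only the functional equation (which holds at that level of generality), no further case analysis is needed. The whole proof is a one-line computation, which is why the statement is recorded as an \emph{immediate consequence}.
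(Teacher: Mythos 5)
Your decomposition of $\pi$ and $\tau$ and the appeal to the functional equation are the right moves and match what the paper has in mind, but the intermediate identity you wrote down is not what the functional equation gives. Replacing $s$ by $1-s$ in $\epsilon(s,\sigma,\varsigma,\psi_0)\,\epsilon(1-s,\sigma^\vee,\varsigma^\vee,\psi_0^{-1})=1$ produces $\epsilon(s,\pi_1^\vee,\tau_1^\vee,\psi_0^{-1})=\epsilon(1-s,\pi_1,\tau_1,\psi_0)^{-1}$, \emph{not} $\epsilon(s,\pi_1,\tau_1,\psi_0)^{-1}$. Substituting the correct identity gives
\[
\epsilon(s,\pi,\tau,\psi)=\frac{\epsilon(s,\pi_1,\tau_1,\psi_0)}{\epsilon(1-s,\pi_1,\tau_1,\psi_0)},
\]
and since a Rankin--Selberg $\epsilon$-factor is a nonzero constant times an exponential in $s$, this quotient equals $1$ at $s=1/2$ but is a non-constant function of $s$ whenever the pair $(\pi_1,\tau_1)$ is ramified. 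So your closing phrase ``identically in $s$'' is unjustified and in fact false for ramified data; the free variable $s$ in the paper's statement of the lemma should likewise be understood as $s=1/2$. What is correct, and what the proof of Theorem \ref{thm: glb main} actually uses, is the value at $s=1/2$, and there no change of variable is needed at all: the functional equation at $s=1/2$ reads directly as $\epsilon(1/2,\pi_1,\tau_1,\psi_0)\,\epsilon(1/2,\pi_1^\vee,\tau_1^\vee,\psi_0^{-1})=1$, whence $\epsilon(1/2,\pi,\tau,\psi)=1$. Apart from this misplaced $s$ versus $1-s$, your argument is the same one-line computation the paper calls an immediate consequence of the functional equation.
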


\subsection{Triviality of the global root number for distinguished cuspidal representations}

Let $E$ be a number field and $\A=\A_E$ the ring of adeles of $E$. Fix a character $\psi$ of $\A/E$ and let $V_E$ be the set of places of $E$. For a pair of  
irreducible cuspidal automorphic representaions $\pi\simeq \otimes'_{v\in V_E}\pi_v$ of $\GL_n(\A)$ and $\tau\simeq \otimes'_{v\in V_E}\tau_v$ of $\GL_m(\A)$ consider the Rankin-Selberg $L$ and $\epsilon$ functions defined as the meromorphic continuation of the products
\[
L(s,\pi,\tau)=\prod_{v\in V_E} L(s,\pi_v,\tau_v) \ \ \ \text{and} \ \ \ \epsilon(s,\pi,\tau)=\prod_{v\in V_E} \epsilon(s,\pi_v,\tau_v,\psi_v).
\]
The first converges for $\Re(s)\gg 1$ and the second converges everywhere to an entire function, independent of $\psi$.
They satisfy the functional equation
\[
L(s,\pi,\tau)=\epsilon(s,\pi,\tau)L(1-s,\pi^\vee,\tau^\vee).
\]

Assume now that $E/F$ is a quadratic extension of number fields and let $\GL_n(\A_F)^1=\{g\in\GL_n(\A_F): \abs{\det g}=1\}$ (where $\abs{\cdot}$ is the standard absolute value on $\A_F$). We say that $\pi$ as above is distinguished, if there exists a cusp form $\phi$ in the space of $\pi$ such that 
\[
\int_{\GL_n(F)\bs \GL_n(\A_F)^1} \phi(h)\ dh\ne 0.
\]

For $v\in V_F$ let $E_v=F_v\otimes_F E$ and consider $F_v$ as a subfield of the algebra $E_v$ via $a\mapsto a\otimes 1$. 
Then $\A=\prod_{v\in V_F}' E_v$ and $\GL_n(\A)=\prod_{v\in V_F}' \GL_n(E_v)$. We may decompose accordingly, $\psi=\prod_{v\in V_F}\psi_v$ and $\pi\simeq \otimes'_{v\in V_F} \pi_v$. If $v$ is inert in $E$ and $w\in V_E$ is the unique place above $v$ then $E_v=E_w$, $\GL_n(E_v)=\GL_n(E_w)$ $\psi_v=\psi_w$ and $\pi_v=\pi_w$. If $v$ splits in $E$ and $w_1,\,w_2\in V_E$ lie above $v$ then $E_v\simeq E_{w_1}\oplus E_{w_2}\simeq F_v\oplus F_v$, $\GL_n(E_v)\simeq \GL_n(F_v)\times \GL_n(F_v)$, $\psi_v=\psi_{w_1}\otimes\psi_{w_2}$ and $\pi_v\simeq \pi_{w_1} \otimes \pi_{w_2}$ accordingly.  

Finally, clearly $E+\A_F$ is strictly contained in $\A_E$. Indeed, if $\gal$ is the Galois action for $E/F$, then $x-x^\gal\in E$ for all $x\in E+\A_F$. If $a\in E\setminus F$ is such that $a^2\in F$ and $w_0\in V_E$ lies over a place of $F$ that is split in $E$ let $x\in \A_E$ be such that $x_{w_0}=a$ and $x_w=0$ for all $w_0\ne w\in V_E$. Then $x-x^\sigma\not\in E$. Consequently, there exists a non-trivial character of $\A/E$ that is trivial on $\A_F$.

\begin{theorem}\label{thm: glb main}
For a pair of integers $n,m$ and irreducible cuspidal automorphic representaions $\pi\simeq \otimes'_{v\in V_E}\pi_v$ of $\GL_n(\A)$ and $\tau\simeq \otimes'_{v\in V_E}\tau_v$ of $\GL_m(\A)$, if both $\pi$ and $\tau$ are distinguished then 
\[
\epsilon(1/2,\pi,\tau)=1.
\]
\end{theorem}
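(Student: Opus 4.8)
The plan is to reduce the triviality of the global root number to the local results already established, one place of $F$ at a time. Since the completed factor $\epsilon(s,\pi,\tau)$ does not depend on $\psi$, I would first take $\psi$ to be a non-trivial character of $\A/E$ that is trivial on $\A_F$ and all of whose local components are non-trivial; such a character exists by the discussion preceding the statement. Grouping the places of $E$ above the places of $F$, one then writes
\[
\epsilon(1/2,\pi,\tau)=\prod_{v\in V_F}\epsilon_v,
\]
a product with only finitely many factors different from $1$, where for $v$ non-split in $E$, with $w$ the place of $E$ above it, $\epsilon_v=\epsilon(1/2,\pi_w,\tau_w,\psi_w)$ is the local Rankin--Selberg factor for $\pi_w,\tau_w$ relative to the quadratic extension $E_w/F_v$ (with $\psi_w$ non-trivial and trivial on $F_v$), and for $v$ split in $E$, with $w_1,w_2$ the places above it, $\epsilon_v=\epsilon(1/2,\pi_{w_1},\tau_{w_1},\psi_{w_1})\,\epsilon(1/2,\pi_{w_2},\tau_{w_2},\psi_{w_2})$, where triviality of $\psi$ on $\A_F$ forces $\psi_{w_2}=\psi_{w_1}^{-1}$, so that $\epsilon_v$ has the shape treated in Lemma \ref{lem: split case}.

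The main global input is the local--global principle for $\GL_n$-distinction: if $\pi$ is a distinguished cuspidal automorphic representation of $\GL_n(\A)$ then each local component $\pi_v$, $v\in V_F$, is distinguished. I would obtain it by noting that a non-zero period functional on the space of $\pi$ is $\GL_n(\A_F)^1$-invariant, hence --- using that distinction forces $\pi^\gal\simeq\pi^\vee$ and thereby the central character of $\pi$ to be trivial on $\A_F^\times$ --- invariant under all of $\GL_n(\A_F)=\prod'_{v\in V_F}\GL_n(F_v)$, and then factoring it through $\pi\simeq\otimes'_{v\in V_F}\pi_v$: evaluating on suitable pure tensors and invoking the local multiplicity-one statement of Lemma \ref{lem: flick} yields a non-zero $\GL_n(F_v)$-invariant functional on each $\pi_v$. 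At a non-split $v$ this says $\pi_w\in\Irr$ is $\GL_n(F_v)$-distinguished; at a split $v$, via $\Hom_{\GL_n(F_v)}(\pi_{w_1}\otimes\pi_{w_2},\trivchar)\simeq\Hom_{\GL_n(F_v)}(\pi_{w_1},\pi_{w_2}^\vee)$, it says $\pi_{w_2}\simeq\pi_{w_1}^\vee$. The same applies to $\tau$.

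It then remains to check that each $\epsilon_v=1$. If $v$ is non-archimedean and non-split, then $E_w/F_v$ is a quadratic extension of $p$-adic fields, $\psi_w$ is non-trivial and trivial on $F_v$, and $\pi_w,\tau_w\in\Irr$ are both distinguished, so Theorem \ref{thm: eps triv} gives $\epsilon_v=1$. If $v$ is archimedean and non-split, then $F_v=\R$, $E_w=\C$, $\psi_w^\gal=\psi_w^{-1}$, and $\pi_w,\tau_w$ are distinguished, so $\epsilon_v=1$ by Theorem \ref{thm: arch rtnmb}. If $v$ is split, then $\pi_v\simeq\pi_{w_1}\otimes\pi_{w_1}^\vee$ and $\tau_v\simeq\tau_{w_1}\otimes\tau_{w_1}^\vee$ are distinguished in the split sense, and $\epsilon_v=1$ by Lemma \ref{lem: split case}. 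Multiplying over all $v\in V_F$ gives $\epsilon(1/2,\pi,\tau)=1$.

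The step I expect to be the main obstacle is the local--global principle invoked in the second paragraph: passing from the non-vanishing of the automorphic period to non-zero invariant functionals on all local components, and in particular the reduction from $\GL_n(\A_F)^1$- to $\GL_n(\A_F)$-invariance (which rests on the triviality of the central character of $\pi$ on $\A_F^\times$). Everything else is an assembly of the local triviality theorems.
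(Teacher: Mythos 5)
Your proposal is correct and follows essentially the same approach as the paper's proof: choose $\psi$ trivial on $\A_F$, factor $\epsilon(1/2,\pi,\tau)$ over the places of $F$, and invoke Theorems~\ref{thm: eps triv}, \ref{thm: arch rtnmb} and Lemma~\ref{lem: split case}. The one difference is that the paper dismisses the local--global principle (distinction of $\pi$ implies distinction of every $\pi_v$) with the single word ``Evidently,'' whereas you flesh out that step, correctly flagging the central-character subtlety involved in upgrading $\GL_n(\A_F)^1$-invariance of the period to $\GL_n(\A_F)$-invariance before factoring it into local functionals.
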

\begin{proof}
Evidently, $\pi_v$ and $\tau_v$ are distinguished for all $v\in V_F$. Note that $\epsilon(1/2,\pi,\tau)=\prod_{v\in V_F} \epsilon(s,\pi_v,\tau_v,\psi_v)$ for any choice of non-trivial character $\psi$ of $\A/E$. We may choose $\psi$ to be trivial on $\A_F$ and then $\psi_v$ is a non-trivial character of $E_v$ that is trivial on $F_v$ for all $v\in V_F$. The theorem therefore follows from Theorems \ref{thm: eps triv} and \ref{thm: arch rtnmb} and Lemma \ref{lem: split case}.
\end{proof}


\def\cfudot#1{\ifmmode\setbox7\hbox{$\accent"5E#1$}\else
  \setbox7\hbox{\accent"5E#1}\penalty 10000\relax\fi\raise 1\ht7
  \hbox{\raise.1ex\hbox to 1\wd7{\hss.\hss}}\penalty 10000 \hskip-1\wd7\penalty
  10000\box7} \def\cprime{$'$} \def\cprime{$'$} \def\cprime{$'$}
  \def\cprime{$'$} \def\cprime{$'$} \def\cprime{$'$} \def\cprime{$'$}
  \def\cprime{$'$} \def\cprime{$'$} \def\cprime{$'$}
  \def\Dbar{\leavevmode\lower.6ex\hbox to 0pt{\hskip-.23ex \accent"16\hss}D}
  \def\cftil#1{\ifmmode\setbox7\hbox{$\accent"5E#1$}\else
  \setbox7\hbox{\accent"5E#1}\penalty 10000\relax\fi\raise 1\ht7
  \hbox{\lower1.15ex\hbox to 1\wd7{\hss\accent"7E\hss}}\penalty 10000
  \hskip-1\wd7\penalty 10000\box7}
  \def\cfudot#1{\ifmmode\setbox7\hbox{$\accent"5E#1$}\else
  \setbox7\hbox{\accent"5E#1}\penalty 10000\relax\fi\raise 1\ht7
  \hbox{\raise.1ex\hbox to 1\wd7{\hss.\hss}}\penalty 10000 \hskip-1\wd7\penalty
  10000\box7} \def\cftil#1{\ifmmode\setbox7\hbox{$\accent"5E#1$}\else
  \setbox7\hbox{\accent"5E#1}\penalty 10000\relax\fi\raise 1\ht7
  \hbox{\lower1.15ex\hbox to 1\wd7{\hss\accent"7E\hss}}\penalty 10000
  \hskip-1\wd7\penalty 10000\box7} \def\cprime{$'$}
  \def\Dbar{\leavevmode\lower.6ex\hbox to 0pt{\hskip-.23ex \accent"16\hss}D}
  \def\cftil#1{\ifmmode\setbox7\hbox{$\accent"5E#1$}\else
  \setbox7\hbox{\accent"5E#1}\penalty 10000\relax\fi\raise 1\ht7
  \hbox{\lower1.15ex\hbox to 1\wd7{\hss\accent"7E\hss}}\penalty 10000
  \hskip-1\wd7\penalty 10000\box7}
  \def\polhk#1{\setbox0=\hbox{#1}{\ooalign{\hidewidth
  \lower1.5ex\hbox{`}\hidewidth\crcr\unhbox0}}} \def\dbar{\leavevmode\hbox to
  0pt{\hskip.2ex \accent"16\hss}d}
  \def\cfac#1{\ifmmode\setbox7\hbox{$\accent"5E#1$}\else
  \setbox7\hbox{\accent"5E#1}\penalty 10000\relax\fi\raise 1\ht7
  \hbox{\lower1.15ex\hbox to 1\wd7{\hss\accent"13\hss}}\penalty 10000
  \hskip-1\wd7\penalty 10000\box7}
  \def\ocirc#1{\ifmmode\setbox0=\hbox{$#1$}\dimen0=\ht0 \advance\dimen0
  by1pt\rlap{\hbox to\wd0{\hss\raise\dimen0
  \hbox{\hskip.2em$\scriptscriptstyle\circ$}\hss}}#1\else {\accent"17 #1}\fi}
  \def\bud{$''$} \def\cfudot#1{\ifmmode\setbox7\hbox{$\accent"5E#1$}\else
  \setbox7\hbox{\accent"5E#1}\penalty 10000\relax\fi\raise 1\ht7
  \hbox{\raise.1ex\hbox to 1\wd7{\hss.\hss}}\penalty 10000 \hskip-1\wd7\penalty
  10000\box7} \def\lfhook#1{\setbox0=\hbox{#1}{\ooalign{\hidewidth
  \lower1.5ex\hbox{'}\hidewidth\crcr\unhbox0}}}
\providecommand{\bysame}{\leavevmode\hbox to3em{\hrulefill}\thinspace}
\providecommand{\MR}{\relax\ifhmode\unskip\space\fi MR }
\providecommand{\MRhref}[2]{%
  \href{http://www.ams.org/mathscinet-getitem?mr=#1}{#2}
}
\providecommand{\href}[2]{#2}

\end{document}